\documentclass[reqno,a4paper,12pt]{amsart}
\usepackage{amsmath,amssymb,amsthm,amscd,amsfonts,amsbsy,bm}
\usepackage[mathscr]{eucal}

\usepackage[dvips]{color}
\usepackage{color}

\usepackage[active]{srcltx}

\font\sc=rsfs10 at 12pt

\hoffset -1.5cm

\textwidth 16.3truecm

\setcounter{secnumdepth}{3}

\setcounter{tocdepth}{3}


\numberwithin{equation}{section}




\renewcommand{\a}{\alpha}
\renewcommand{\b}{\beta}
\newcommand{\g}{\gamma}

\renewcommand{\d}{\delta}
\newcommand{\D}{\Delta}
\newcommand{\e}{\epsilon}
\newcommand{\ve}{\varepsilon}
\newcommand{\z}{\zeta}
\newcommand{\y}{\eta}

\renewcommand{\k}{\kappa}

\renewcommand{\l}{\lambda}

\newcommand{\m}{\mu}

\newcommand{\x}{\xi}

\renewcommand{\r}{\rho}
\newcommand{\s}{\sigma}

\newcommand{\vs}{\varsigma}

\newcommand{\f}{\phi}
\newcommand{\vf}{\varphi}

\renewcommand{\o}{\omega}



\newcommand{\C}{{\mathbb C}}
\newcommand{\R}{{\mathbb R}}
\newcommand{\Z}{{\mathbb Z}}

\newcommand{\DD}{{\mathbb{D}}}
\newcommand{\HH}{{\mathbb H}}



\newcommand{\kb}{{\mathbf k}}


\newcommand{\Bb}{{\mathbf B}}

\newcommand{\Fb}{{\mathbf F}}

\newcommand{\Ib}{{\mathbf I}}

\newcommand{\Pb}{{\mathbf P}}

\newcommand{\Sbb}{{\mathbf S}}
\newcommand{\Tb}{{\mathbf T}}


\newcommand{\HF}{\mathfrak H}

\newcommand{\SF}{\mathfrak S}


\newcommand{\Ac}{{\mathcal A}}
\newcommand{\Bc}{{\mathcal B}}

\newcommand{\Ec}{{\mathcal E}}
\newcommand{\Fc}{{\mathcal F}}
\newcommand{\Gc}{{\mathcal G}}
\newcommand{\Hc}{{\mathcal H}}

\newcommand{\Sc}{{\mathcal S}}

\newcommand{\Wc}{{\mathcal W}} 
\newcommand{\Xc}{{\mathcal X}}
\newcommand{\Yc}{{\mathcal Y}}



\newcommand{\Fs}{\sc\mbox{F}\hspace{1.0pt}}




\newcommand{\supp}{\hbox{{\rm supp}}\,}


\newcommand{\comp}{\footnotesize{\rm{Comp}}\,}







\DeclareMathOperator{\rank}{rank}

\newcommand{\loc}{\operatorname{loc}}
















\newtheorem{theorem}{Theorem}[section]
\newtheorem{proposition}[theorem]{Proposition}
\newtheorem{lemma}[theorem]{Lemma}
\newtheorem{corollary}[theorem]{Corollary}

\newtheorem*{theorem*}{Theorem}

\theoremstyle{definition}
\newtheorem{definition}[theorem]{Definition}

\theoremstyle{remark}
\newtheorem{remark}[theorem]{Remark}
\newtheorem{example}[theorem]{Example}


\newcommand{\sasha}[1]{} 
\newcommand{\GR}[1]{}


\begin{document}

\title[]{Toeplitz operators in the Herglotz space}

\author[G. Rozenblum]{Grigori Rozenblum}
\address{1. Department of Mathematics \\
                          Chalmers University of Technology \\
                          2.Department of Mathematics  University of Gothenburg \\
                          Chalmers Tv\"argatan, 3, S-412 96
                           Gothenburg
                          Sweden}
\email{grigori@math.chalmers.se}
\author[Vasilevski]{Nikolai Vasilevski }
\address{Department of Mathematics, Cinvestav, Mexico-city, Mexico}
\email{nvasilev@duke.math.cinvestav.mx}
\begin{abstract}
We define and study Toeplitz operators in the space of Herglotz solutions of the Helmholtz equation in $\R^d$. As the most traditional definition of Toeplitz operators via Bergman-type projection is not available here, we use an approach based upon the reproducing kernel nature of the Herglotz space and  sesquilinear forms, which results  in a meaningful theory. For two important patterns of  sesquilinear forms we discuss a number of properties, including the  uniqueness of determining the symbols from the operator, the finite rank property, the conditions for boundedness and compactness, spectral properties, certain algebraic relations.
\end{abstract}
\keywords{Bergman type spaces, Helmholtz equation,
Toeplitz operators}
\date{}

\maketitle


\section{Introduction}\label{intro}
In this paper we address several questions concerning Toeplitz operators in the Herglotz (also-called Agmon-H\"ormander) space of solutions of the Helmholtz equation in $\R^d, d\ge2$. We discuss various definitions of the operators, their
representations, questions of uniqueness, finite rank and boundedness.

Toeplitz operators play an important role in Analysis, especially, in function theory and functional analysis. They appear also in the representation theory, quantization, and noncommutative geometry. In its turn,  problems arising in the theory of Toeplitz operators appeal to various fields of mathematics.

In the course of the development of the theory of Toeplitz operators, the point of view underwent  considerable  modification and generalization. The commonly used definition of the ``classical'' Bergman-type Toeplitz operators involves the following ingredients: a Hilbert space $L_2$ of functions defined in a domain in the (multidimensional) real or complex space; its closed subspace $\mathcal{B}$, as a rule consisting of  solutions of some elliptic equation or system; the orthogonal projection $P : L_2 \rightarrow \mathcal{B}$; and a bounded measurable function $a$.
Then the Toeplitz operator $\Tb_a$ with symbol $a$ acts as the compression of the multiplication (by~$a$) operator to the subspace $\mathcal{B}$:
\begin{equation}\label{0.1}
    \Tb_a u = P(au),\quad u \in \mathcal{B}.
\end{equation}
Numerous studies of various species of Toeplitz operators, defined by \eqref{0.1}, use quite different subspaces $\mathcal{B}$, including, in particular, the classical Hardy space $H^2$ being a closed subspace in the $L_2$ space on the unit circle; the (weighted) Bergman space consisting of square integrable analytic functions on the unit disk; the Fock (or Segal-Bargmann) space consisting of entire analytical functions square integrable against the Gaussian measure; harmonic and pluriharmonic Bergman spaces, poly-Bergman and poly-Fock spaces, etc.

This point of view prevailed for a considerable time, however, it turned out that certain important cases of operators, which were reasonable to consider as Toeplitz ones, do not fit the above scheme. The most outstanding here was the situation with singular symbols. In fact, formula \eqref{0.1} is quite meaningful as long as the function $a$ is bounded: for $u\in \mathcal{B} \subset L_2$, the function $au$ belongs to $L_2$ as well, and therefore we can legally apply the projection $P$ to it. However, if $a$ is unbounded, or even not a function (say, a measure or a distribution), then the `object' $au$ is not an element in $L_2$. Thus each such case requires additional specific analytic considerations.

An important common feature in all particular cases is that the subspaces where the Toeplitz operators act are the \emph{reproducing kernel spaces}.
This means that for any point $x$ in the domain in question the evaluation functional $\f_x : \, u \mapsto u(x)$ is continuous in the norm of $\mathcal{B}$. By the Riesz representation theorem, such functional must have a special form,
\begin{equation*} 
    \f_x(u)=\langle u, \k_x\rangle, \quad u\in \Bc,
\end{equation*}
where $\langle\cdot,\cdot\rangle$ is the scalar product in $\Bc$ and $\k_x(\cdot)$ is a  function in $\Bc$, depending on $x$. The function of two variables $\k_x(y)=\k(x,y)$ is called the reproducing kernel for $\Bc$.

A general unified approach to defining  Toeplitz operators, based upon the reproducing kernel subspaces, was developed in \cite{RozVas1}, \cite{RozVas2}. The role of $\Bc$ can be played by any reproducing kernel space while the  symbol is realized by a \emph{bounded sesquilinear form} $\Fb(u,v)$, $u,v\in\Bc$. As soon as such sesquilinear form is defined, the corresponding Toeplitz operator acts in $\Bc$ as
\begin{equation}\label{0.3}
    (\Tb_\Fb u)(x)=\Fb(u(\cdot),\k_x(\cdot)).
\end{equation}
Viability of such approach was demonstrated in   \cite{RozVas1}, \cite{RozVas2} for   Toeplitz operators acting on the classical Bergman space on the unit disk and the Fock space. As was shown there, definition \eqref{0.3} covers (apart of the standard classical case: bounded measurable function symbols) a large variety of singular symbols, such as unbounded functions,  measures, distributions, and even hyperfunctions in a certain class.

An important circumstance, to be emphasized here, is that definition \eqref{0.3} of a Toeplitz operator uses only a bounded sesquilinear form and a reproducing kernel, both of them being internal objects for the reproducing kernel space $\Bc$ in question. In this way, the enveloping Hilbert space, such as $L_2$, does not enter  the definition of a Toeplitz operator under this approach and thus can be completely ignored. Moreover, such an approach permits us to define and study Toeplitz operators in the situation when there are even no suitable candidates for the role of the enveloping space, and thus no corresponding orthogonal projection is available.

Considered in this paper Toeplitz operators on the Herglotz space are exactly of this nature. We define and study Toeplitz operators without any mentioning of an enveloping space (nevertheless some candidates to its role are discussed in Section \ref{Sect.envel}). The cornerstone of our study is definition \eqref{0.3} which uses the reproducing kernel structure of the Herglotz space as well as the explicit formula for the reproducing kernel, obtained in Section \ref{Sect.sesq}.

The general notion of Bergman type spaces involves their understanding as Hilbert spaces of solutions of some elliptic equation or system. In particular, the $\bar{\partial}$ operator leads to classical Bergman spaces of analytic functions, and the Fock type spaces of entire analytic functions in the complex plane or $\C^d$. The harmonic Bergman spaces as well as  poly-Bergman and poly-Fock spaces are the spaces of solutions of the Laplacian $\D$ and the iterated $\bar{\partial}$ operator. The next most important equation of interest, following the Cauchy-Riemann one and the Laplacian is, probably, the Helmholtz equation in $\R^d$, normalized as
\begin{equation}\label{0.Helm}
    \D u+u=0.
\end{equation}

Note that there is a certain possibility of choice, which of particular Hilbert spaces of solutions of \eqref{0.Helm} to select for study. It turns out that in a majority of applications the solutions $u$ satisfying the radiation conditions are needed for further analysis. Such solutions do not belong to $L_2(\R^d)$, but miss $L_2$ just a little bit (there are no nontrivial solutions of the Helmhotz equation in $L_2(\R^d)$). They  can be also described as Fourier transformed  $L_2$ functions on the unit sphere $\Sbb=S^{d-1}\subset \R^d$, considered as distributions in $\R^d$. Such solutions are usually called Herglotz solutions, and the norm in $L_2(\Sbb)$ generates a Hilbert space structure in the space $\Hc$ of these solutions. It is also known that this norm is equivalent to another one, expressed in terms of the \emph{far field pattern}, the asymptotic behavior of functions in $\Hc$ along rays going to infinity. Unfortunately both definitions of the norm in $\Hc$ are nonlocal. In Section \ref{The Space Hc} we present an exposition of these properties.

Section \ref{Sect.Topl} is devoted to the definition of Toeplitz operators.
The nonlocality of the norm in $\Hc$ prevents one from defining a convenient enveloping Hilbert space for $\Hc$. We discuss this topic in Section \ref{Sect.envel}. On the other hand, the definition by means of sesquilinear forms turns out to be quite feasible. This becomes possible due to the reproducing kernel nature of the space $\Hc$. An explicit expression for the corresponding reproducing kernel is given in Section \ref{Sect.sesq}.

Among possible classes of sesquilinear  forms to study, we choose two, which by our opinion deserve certain interest. The first one contains sesquilinear forms $$\Fb(u,v)=\int_{\R^d}a(x)u(x)\overline{v(x)}dx,$$ with $a(x)$ being a bounded function on $\R^d$, which we refer to as `the symbol' of the operator. These forms appear in a number of applications, especially in the classical and quantum scattering theory. Operators generated by such forms are closely related to the so-called Born approximation to the scattering matrix. On the other hand, these forms admit convenient and useful  generalizations to unbounded functions $a(x)$ and even distributions.

Considering Toeplitz operators generated by the forms of the this kind, we start, in Section  \ref{se:radial}, by the analysis of the most transparent, but, nevertheless interesting case when the symbol $a(x)$ is radial. Here, the complete spectral analysis of operators can be performed. Furthermore we give the conditions for boundedness, compactness and membership to Schatten classes.

Passing to general  symbols $a(x)$, in Section \ref{Sect.Special form}, we discuss the representation of the operators in the space $L_2(\Sbb)$ via the Fourier transform of the symbol. Such representation enables us to extend the definition of Toeplitz operators to rather singular symbols, belonging to certain classes of distribution (see Section \ref{Subsect.Reduction}). Then we pass to, probably, the first property which for Toeplitz operators in Helmholtz spaces turn out to be quite different from those considered in the Fock and Bergman spaces, see Section \ref{Sub.Degeneration}. This property concerns uniqueness. Namely, the question is: \emph{to what extent does the operator define its symbol}. In other words, are there any nontrivial symbols which generate the zero operator.  As it turns out, the Toeplitz operators on the Herglotz space can be zero even for  nice fast decaying symbols (but not for compactly supported ones). In Sect \ref{Sub.Degeneration} we explain the mechanism of this phenomenon.

The next topic of our study concerns the finite rank property - see Section \ref{SectFR}. This property, for operators in the Bergman and Fock spaces, has been under an active study recently, and some decisive result have been obtained. Generally, these results assert that if a Toeplitz operator has finite rank then the symbol (moderately growing, to avoid the non-uniqueness case) must be zero or, in a certain sense, highly degenerate. Here, we establish that for symbols $a(x)$ with compact support (again, this condition removes non-uniqueness), the finite rank Toeplitz operator in the Helmholtz space must be zero.

Further on, in  Sections \ref{Sect.Boundedness} and 8, we discuss  conditions for boundedness of Toeplitz operators with general symbols $a(x)$, including the possibility of $a$ being a distribution. The boundedness conditions we found are expressed in terms of the Fourier transform of $a$. More sharp boundedness conditions are obtained for the case of the symbol being majorated by a homogeneous function. A natural consequence of boundedness results are the ones for the compactness of the operator.

In the final Section \ref{h-convolutions} we consider Toeplitz operators defined by a different kind of sesquilinear forms  $$\Fb(u,v)=\int_{\Sbb}a(\x)\f(\x)\overline{\psi(\x)}dS(\x),$$ where $\f,\psi$ are functions in $L_2(\Sbb)$ which represent functions $u,v\in \Hc$. This class of forms is interesting since the corresponding Toeplitz operators form a commutative algebra. An alternative description of such Toeplitz operators is achieved on the base of a new operation, which we call $h$-convolution. Their boundedness conditions, spectral properties, and the structure of invariant subspaces are described in full detail.

For  readers' convenience, we present the situation with non-degeneracy for Toeplitz operators in the Bergman and the Fock spaces  in Appendix \ref{SectDegenFock}.

The first-named author thanks CINVESTAV, Mexico for hospitality and financial support during an essential part of the work with the paper.

\section{The space of Herglotz solutions of the Helmholtz equation}\label{The Space Hc}

We consider the Helmholtz equation
\begin{equation}\label{Helm1}
    \D u+u=0
\end{equation}
in $\R^d$, $d>1$. Among various classes of its solutions, the main object of interest in the paper is  the space of the so-called Herglotz solutions, which are defined as those whose Fourier transform has support in the unit sphere $\Sbb=S^{d-1}\subset\R^d$, moreover, $\Fc u$ is an $L_2$ function on $\Sbb.$ Such solutions possess the slowest possible decay rate at infinity: if a solution decays faster than the Herglotz one, it must be zero. Such solutions can  also be characterized by their asymptotic behavior at infinity: they satisfy the so-called radiation condition. This class of solutions plays an important role in the scattering theory, both classical and quantum. A detailed analysis of these solutions has been performed in \cite{Agmon1}; we review here the results of \cite{Agmon1} which we will use in the paper.

Having a distributional solution  $u\in\Sc(\R^d)$ of \eqref{Helm1}  in the Schwartz space, we can apply the Fourier transform, arriving at $(|\x|^2-1)\hat{u}=0$, therefore $\hat{u}=0$ must be a distribution supported on the unit sphere $\Sbb=S^{d-1}\subset \R^{d}$.
Following \cite{Agmon1}, we consider solutions of \eqref{Helm1} of the form
\begin{equation}\label{GeneralForm}
u(x) = (\Ib \f)(x) = c_d\int_{\Sbb}\f(\x)e^{ix\x}\,dS(\x),  \quad \text{with} \ \
c_d  =  \frac{\sqrt{\pi}}{(2\pi)^{d/2}},
\end{equation}
where $dS$ is the  Lebesgue measure on $\Sbb$ and $\f\in L_2(\Sbb)$.  We denote the scalar product in $L_2(\Sbb)$ by $\lfloor\cdot,\cdot\rfloor$.

The solutions of the form \eqref{GeneralForm}, with $\f\in L_2(\Sbb)$, possess the following basic properties.
First of all, they satisfy the Sommerfeld radiation conditions,
\begin{equation*}
    \int_{|x|=R}|\partial_r u-iu|^2 dS=o(R^{d-1}).
\end{equation*}
Further on, as it is established in \cite[Theorem 4.5 ]{Agmon1}, the asymptotic behavior of $u$  is closely related with the initial function $\f$
in the following mean value sense.

Introduce the space
\begin{equation}\label{AgmonSpace0}
    \Bb^*=\{u\in L_{2,\loc}:\|u\|_*^2\equiv\sup_{R>0}R^{-1}\int_{|x|<R}|u|^2dx<\infty\}.
\end{equation}
In this space, the set $\dot{\Bb}^*$, consisting of functions $u$ such that
\begin{equation}\label{AgmonZero}
    \lim_{R\to\infty}R^{-1}\int_{|x|<R}|u|^2dx=0,
\end{equation}
forms a closed subspace. Then the far field pattern is present,
\begin{equation}\label{Limit}
    u(x)= (2\pi/|x|)^{(d-1)/2}\left(\ve^+e^{i|x|}\f(x/|x|)+\ve^-e^{-i|x|}\f(-x/|x|)\right)+v(x),
\end{equation}
with $v\in \dot{\Bb}^*$, where $\ve^{\pm}=\exp(\pm i(d-1)\pi/4)$.  Since  \eqref{Limit} involves the values of $\f$ in two diametrically opposite points in $\Sbb$, this relation does not immediately provide an expression for $\f(\x)$ in the terms of $u$. Nevertheless, the following  norm equality can be derived  from \eqref{Limit} (see \cite[(4.32)]{Agmon1}, with the coefficient changed according to \eqref{GeneralForm}):
\begin{equation}\label{unitary}
    \|u\|_{\Bb^*}^2=\lim_{R\to\infty} R^{-1} \int_{|x|<R}|u(x)|^2dx=\| \f\|_{L_2(\Sbb)}^2.
\end{equation}
Relation \eqref{unitary} means that with our choice of coefficients, the mapping
\begin{equation}\label{I unitary}
    \Ib:L_2(\Sbb)\to \Hc, \ \Ib: \f\mapsto u,
\end{equation}
is an isometry of the space $L_2(\Sbb)$ to the space $\Hc$ of solutions of \eqref{Helm1}, belonging to $\Bb^*$, with norm \eqref{AgmonSpace0}.  Moreover, it is proved in \cite{Agmon1} that $\Ib$ is an isometry \emph{onto} $\Hc$.

We calculate now the adjoint operator $\Ib^*:\Hc\to L_2(\Sbb)$,  $\Ib^*=\Ib^{-1}$:
\begin{eqnarray*}
 \langle \Ib \f, v \rangle &=& c_d \lim_{R \to \infty} R^{-1} \int_{|x|<R} \int_{\Sbb}\f(\x)e^{ix\x}\,dS(\x) \overline{v(x)} dx \\
&=&c_d \int_{\Sbb} \f(\x) \overline{ \lim_{R \to \infty} R^{-1} \int_{|x|<R}v(x)e^{-ix\x}\,dx}dS(\x).
\end{eqnarray*}
This means that
\begin{equation} \label{eq:I*}
 (\Ib^* v)(\xi) = c_d \lim_{R \to \infty} R^{-1} \int_{|x|<R} v(x)e^{-ix\x}\,dx.
\end{equation}

To describe a convenient orthonormal basis in $\Hc$, we introduce first the standard orthonormal basis in $L_2(\Sbb)$ formed by real-valued spherical harmonics (see, e.g., \cite{Atkin-Han}):
\begin{equation*}
 \left\{\, Y_{n,j}\, : \ n \in \mathbb{Z}_+, \ j = 1,..., N_{n,d}\, \right\},
\end{equation*}
where
\begin{equation*}
 N_{n,d} = \begin{cases}
            1, & \text{if} \ n=0, \\
            \frac{(2n+d-2)(n+d-3)!}{n!(d-2)!}, & \text{if} \ n \geq 1.
           \end{cases}
\end{equation*}
Since
\begin{equation*}
 \delta_{n,m}\delta_{j,l} = \lfloor Y_{n,j}, Y_{m,l} \rfloor = \langle \Ib Y_{n,j}, \Ib Y_{m,l} \rangle,
\end{equation*}
the system of functions
\begin{equation*}
 \mathbf{e}_{n,j} = \Ib Y_{n,j}, \quad \text{with} \ \  n \in \mathbb{Z}_+, \ j = 1,..., N_{n,d},
\end{equation*}
forms an orthonormal basis in $\Hc$. By \cite[p. 37]{Helgason81},  we have
\begin{equation} \label{eq:e_n,j}
 \mathbf{e}_{n,j}(x) = \sqrt{\pi}\, i^n\, \frac{J_{n+(d-2)/2}(r)}{r^{(d-2)/2}}\, Y_{n,j}(\xi),
\end{equation}
where $x =r\xi$, $\xi\in\Sbb$ and $J_{\nu}$ is the Bessel function of order $\nu$.

\section{Definition of Toeplitz operators}\label{Sect.Topl}
\subsection{Enveloping spaces}\label{Sect.envel}
Having in mind to develop a theory of Toeplitz operators in $\Hc$, we will discuss here the possibility of defining such operators in the traditional way. This means that, having an enveloping Hilbert space $\Gc\supset \Hc$, with $\Hc$ being a closed subspace, and the orthogonal projection $\Pb:\Gc\to\Hc$, the Toeplitz operator with symbol $a(x), \, x\in\R^d$, might be defined as
$\Tb_a=\Pb a$, i.e., $\Tb_a : \, u\in\Hc\mapsto \Pb( a u)\in\Hc$.  Some candidates for such an enveloping space were proposed in \cite{Agmon1} and other papers.  For example, the already mentioned space
\begin{equation}\label{AgmonSpace}
    \Bb^*=\{u\in L_{2,\loc}:\|u\|_*^2\equiv\sup_{R>0}R^{-1}\int_{|x|<R}|u|^2dx<\infty\},
\end{equation}
proposed in \cite{AgmonHorm}, might serve as an enveloping space. However it is not a Hilbert space, although the norm defined in \eqref{AgmonSpace}, being restricted to $\Hc$, is equivalent to the Hilbert norm \eqref{unitary} in $\Hc$. To have a Hilbert norm, one might modify \eqref{AgmonSpace}, setting instead
\begin{equation}\label{AgmonNonlocal}
  \Gc=\{u\in L_{2,\loc}:\|u\|_*^2\equiv\limsup_{R>0}R^{-1}\int_{|x|<R}|u|^2dx<\infty\},
\end{equation}
however the expression in \eqref{AgmonNonlocal} degenerates on the subspace $\dot{\Bb}^*$.
Considering the quotient space would lead to a highly inconvenient non-locally defined space, not admitting locally defined operators. Another approach, developed in \cite{Guo}, leads to a certain Banach space with mixed norm, as the enveloping space for $\Hc$.

An enveloping Hilbert space for $\Hc$, in dimension $d=2$, was proposed in \cite{AlvarezFS}, see  also \cite{BarceloFPR}. As shown there, the space $\Wc$ with norm
\begin{equation}\label{HC2}
    \|u\|^2_{\Wc}=\int_{\R^2}\left(|u|^2+|\frac{\partial u}{\partial \vartheta}|^2\right)(1+r)^{-3}dx
\end{equation}
(in polar co-ordinates $x=(r,\vartheta)$ in $\R^2$) contains all solutions of the Helmholtz equation of the form $\Ib \f,$ $\f\in L_2(S^1)$, with the norm equivalent to $\|\f\|_{L_2(S^1)}$. This construction enabled the authors of \cite{BarceloFPR} to investigate properties of some Toeplitz operators in $\Hc$.

In higher dimension, a generalization of the above result was obtained  in \cite{Perez.Herglotz}. The enveloping Hilbert space
has the norm similar to \eqref{HC2}, with replacement of $|\frac{\partial u}{\partial \vartheta}|^2$ by $|{\nabla}_S u|^2$, where $\nabla_S$ denotes the spherical gradient.

Such spaces $\Wc$ turned out to be useful in the study of properties of the Helmholtz spaces. However, these spaces do not admit the multiplication by a non-smooth function, and therefore they are not sufficient for the analysis of  Toeplitz operators with nonsmooth symbols, in the classical setting. We mention as well that the representation of the reproducing kernel, found in \cite{AlvarezFS}, \cite{Perez.Herglotz}, is rather non-explicit.

\subsection{The reproducing kernel}\label{Sect.sesq}
The reproducing kernel for the space $\Hc$ will play an important role in our study. Some formulas for this kernel have been found in \cite{BarceloFPR}, however the representation to follow is more convenient for further analysis. Since the scalar product in $\Hc$ is defined in a rather non-local way, it is more convenient to express the Toeplitz operators in terms of the space  $L_2(\Sbb)$ via the unitary operator $\Ib$.

Let $u(x) = \Ib\f$ be a function in $\Hc$, for some $\f\in L_2(\Sbb)$. Thus, for the reproducing kernel $\k(x,y)=\overline{\k(y,x)}=\k_x(y)$, we have
\begin{equation}\label{repr1}
    u(x)=\langle u(\cdot), \k_x(\cdot)\rangle.
\end{equation}

Since the function $\k_x(\cdot)$ belongs to $\Hc$ for any $x\in\R^d$, it can be represented as
\begin{equation}\label{repr2}
    \k_x(y)=(\Ib \Psi_x)(y) = c_d \int_{\Sbb}\Psi_x(\x)e^{i\x y}dS(\x),
\end{equation}
with some function $\Psi_x\in L_2(\Sbb)$.
Therefore,
\begin{equation}\label{repr3}
    \langle u,\k_x\rangle=\lfloor\f, \Psi_x\rfloor =  \int_{\Sbb}\f(\x)\overline{\Psi_x(\x)}dS(\x)=u(x).
\end{equation}
On the other hand, we have
\begin{equation}\label{repr4}
    u(x) =  c_d \int_{\Sbb}\f(\x)e^{ix\x}dS(\x).
\end{equation}
Due to the arbitrariness of $\f\in L_2(\Sbb)$, \eqref{repr3}, \eqref{repr4} imply that
\begin{equation}\label{repr5}
    \Psi_x(\x) = c_d\, e^{-ix\x},
\end{equation}
and, finally,
\begin{equation}\label{repr6}
    \k_x(y) = c_d \int_{\Sbb}\Psi_x(\xi)e^{iy\x}dS(\x) = c_d^2 \int_{\Sbb}e^{-i(x-y)\x}dS(\x)=\kb(|x-y|).
\end{equation}
The last relation means that the reproducing kernel for $\Hc$ is of difference type, $\k_x(y)=\kb(|x-y|)$,  with $\kb$ being the, obviously, rotation invariant, Fourier transform of the Lebesgue measure on the sphere $\Sbb$ or, in other words, of the distribution $\d(|\x|-1)\in \Ec'(\R^d)$.

An explicit expression for the reproducing kernel follows from \cite[p. 37]{Helgason81}, or \cite[Formula (4.42)]{Agmon1}:
\begin{equation} \label{repr7}
 \kb(|x-y|) = \frac{\pi}{(2\pi)^{d/2}}\, |x-y|^{-(d-2)/2}J_{(d-2)/2}(|x-y|),
\end{equation}
where $J_{(d-2)/2}$ is the Bessel function of order $(d-2)/2$.

We mention as well an alternative formula for the reproducing kernel $\k_x(y)=\kb(|x-y|)$, that uses its series representation in the  the orthonormal basis \eqref{eq:e_n,j}:
\begin{eqnarray*}
 \kb(|x-y|) &=& \sum_{n \in \mathbb{Z}_+, \ j = 1,..., N_{n,d}} \mathbf{e}_{n,j}(y)\overline{\mathbf{e}_{n,j}(x)} \ \ = \sum_{n \in \mathbb{Z}_+, \ j = 1,..., N_{n,d}} \mathbf{e}_{n,j}(x)\overline{\mathbf{e}_{n,j}(y)} \\
 &=& \pi\,\sum_{n \in \mathbb{Z}_+, \ j = 1,..., N_{n,d}}  \frac{J_{n+(d-2)/2}(r)J_{n+(d-2)/2}(\rho)}{r^{(d-2)/2}\rho^{(d-2)/2}}\, Y_{n,j}(\xi)Y_{n,j}(\eta), \quad
\end{eqnarray*}
where $ x =r\xi, \ y =\rho\eta$, $\xi,\eta\in\Sbb.$
\subsection{Sesquilinear forms}
Since there is no commonly accepted convenient enveloping functional Hilbert space  for $\Hc$, we will use another approach to Toeplitz operators, the one based upon considering sesquilinear forms of a special kind.
This approach does  not require any enveloping Hilbert space,  it uses the sesquilinear form and the reproducing kernel only. It has been proposed by the authors in \cite{RozVas1}, having been applied to Toeplitz operators in the Fock space over the complex plane $\C$ and developed further in \cite{RozVas2}  for the Bergman space case on the disk in $\C$. In addition to eliminating the need of an enveloping space, this approach enabled us to consider Toeplitz operators with highly singular symbols, involving measures, distributions and even certain hyper-functions.

As usual, a bounded sesquilinear form  $\Fb(u,v)$ in $\Hc$ is linear in $u$, anti-linear in $v$, and satisfies
$|\Fb(u,v)|\le C\|u\|\|v\|$ for all $u,v\in\Hc.$
As explained in  \cite{RozVas1}, having such a bounded sesquilinear form $\Fb$, the Toeplitz operator $\Tb_\Fb$, with form-symbol $\Fb$, in $\Hc$ is defined by
\begin{equation}\label{operator}
   ( \Tb_\Fb u)(x)=\Fb(u, \k_x(\cdot)),
\end{equation}
where $\k_x$ is the reproducing kernel for the space $\Hc$. Further on, we will consider several types of sesquilinear forms and study the properties of the corresponding operators.

\section{Radial Toeplitz operators} \label{se:radial}

Before presenting  in the subsequent sections the detailed study of Toeplitz operators defined by general sesquilinear forms
$$\Fb_a(u,v)=\int_{\R^d} a(x)u(x)\overline{v(x)}dx, \ u,v\in\Hc,$$
we consider here its special, important and more transparent, case when the function $a$  is \emph{radial}: $ a = a(|x|) =a(r)$.

Under suitable conditions on a locally integrable radial function $a = a(|x|) = a(r)$, the \emph{radial} Toeplitz operator $\Tb_a := \Tb_{\Fb_a}$, defined by the sesquilinear form
\begin{equation*}
 \Fb(u,v)=\Fb_a(u,v)=  \int_{\R^d} a(|x|)u(x)\overline{v(x)}dx, \ u,v\in\Hc,
\end{equation*}
is given by \eqref{operator}:
\begin{equation*}
 (\Tb_au)(x) = \int_{\R^d} a(|y|)u(y)\overline{\k_x(y)}\,dy.
\end{equation*}
For each $\mathbf{e}_{n,j}(x)$, we calculate formally ($y =\rho\eta$)
\begin{eqnarray*}
 (\Tb_a\mathbf{e}_{n,j})(x) &=& \int_{\R^d} a(|y|)\mathbf{e}_{n,j}(y)\overline{\k_x(y)}\,dy \\
 &=& \int_{\R^d} a(|y|)\mathbf{e}_{n,j}(y)\left(\sum_{m \in \mathbb{Z}_+, \ l = 1,..., N_{n,d}} \mathbf{e}_{m,l}(x)\overline{\mathbf{e}_{m,l}(y)}\right)\,dy \\
 &=& \sum_{m \in \mathbb{Z}_+, \ l = 1,..., N_{n,d}} \mathbf{e}_{m,l}(x) \int_{\R^d} a(|y|)\mathbf{e}_{n,j}(y) \overline{\mathbf{e}_{m,l}(y)}\,dy \\
 &=& \sum_{m \in \mathbb{Z}_+, \ l = 1,..., N_{n,d}} \mathbf{e}_{m,l}(x)\,  \pi \int_{\mathbb{R}_+} a(\rho) \frac{J_{n+(d-2)/2}(\rho)J_{m+(d-2)/2}(\rho)}{\rho^{d-2}}\,\rho^{d-1}\, d\rho \\
 && \times \ \int_{\Sbb}Y_{n,j}(\eta) \overline{Y_{m,l}(\eta)}\, dS(\eta) \\
 &=& \left(\pi \int_{\mathbb{R}_+} a(\rho) [J_{n+(d-2)/2}(\rho)]^2 \rho\, d\rho \right)\,
 \mathbf{e}_{n,j}(x).
\end{eqnarray*}

Given a locally integrable radial function $a = a(|x|) = a(r)$, we introduce the \emph{spectral sequence}
$\pmb{\gamma}_a =\{\gamma_a(n)\}_{n \in \mathbb{Z}_+}$, where
\begin{equation}\label{radial.spectral}
 \gamma_a(n) = \pi \int_{\mathbb{R}_+} a(r) [J_{n+(d-2)/2}(r)]^2\, r\, dr.
\end{equation}
Then the above calculation leads to the following assertion.

\begin{theorem}
Let  a locally integrable radial function $a = a(|x|) = a(r)$ be such that the corresponding sequence $\pmb{\gamma}_a =\{\gamma_a(n)\}_{n \in \mathbb{Z}_+}$ is bounded.
Then the Toeplitz operator $\Tb_a$, initially  densely defined {\rm (}on  finite linear combinations of basis elements $\mathbf{e}_{n,j}${\rm )},  admits the bounded extension by continuity  to the whole  Hilbert space $\Hc$, and possesses the following properties:
\begin{itemize}
 \item [-] the Toeplitz operator $\Tb_a$ is diagonal with respect to the orthonormal basis $\{\mathbf{e}_{n,j}\}$;
 \item [-] each $\gamma_a(n)$, $n \in \mathbb{Z}_+$, is an eigenvalue of $T_a$ with  multiplicity $N_{n,d}$, whose corresponding eigenspace has the form
 \begin{equation*}
  \mathbb{H}_n^d = \mathrm{span}\,\{\mathbf{e}_{n,1}, \mathbf{e}_{n,2}, ..., \mathbf{e}_{n,N_{n,d}}\};
 \end{equation*}
 \item [-] the spectrum of the Toeplitz operator $\Tb_a$ is given by
\begin{equation*}
 \mathrm{spec}\,\Tb_a = \mathrm{closure}\, \{\gamma_a(n)\}_{n \in \mathbb{Z}_+};
\end{equation*}
 \item [-] the Toeplitz operator $\Tb_a$ is compact if and only if $\pmb{\gamma}_a \in c_0$.
\end{itemize}
\end{theorem}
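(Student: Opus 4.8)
The calculation preceding the theorem already shows, at the formal level, that $\Tb_a \mathbf{e}_{n,j} = \gamma_a(n)\,\mathbf{e}_{n,j}$, so the essential work is to make this rigorous and to read off the spectral consequences. First I would fix the meaning of $\Tb_a$ on the dense domain $\mathcal{D}$ of finite linear combinations of the $\mathbf{e}_{n,j}$: since $a$ is locally integrable and each $\mathbf{e}_{n,j}$ is bounded (being $\sqrt{\pi}\,i^n J_{n+(d-2)/2}(r)r^{-(d-2)/2}Y_{n,j}(\xi)$, with $J_\nu(r)r^{-\nu}$ bounded near $0$ and $J_\nu(r)=O(r^{-1/2})$ at infinity), the integrand $a(|y|)\mathbf{e}_{n,j}(y)\overline{\k_x(y)}$ in \eqref{operator} need not be absolutely integrable a priori; but one only needs the \emph{form} $\Fb_a(\mathbf{e}_{n,j},\mathbf{e}_{m,l})$ to be finite, and by the orthogonality of spherical harmonics this reduces to the one-dimensional integral $\pi\int_{\R_+} a(\rho) J_{n+(d-2)/2}(\rho)J_{m+(d-2)/2}(\rho)\,\rho\,d\rho$, which I would justify converges under the boundedness hypothesis on $\pmb{\gamma}_a$ together with Cauchy--Schwarz $|\gamma_a(n,m)|\le \sqrt{\gamma_{|a|}(n)\gamma_{|a|}(m)}$ — wait, more carefully: the hypothesis is on $\gamma_a$, so I would instead argue that finiteness of each diagonal entry $\gamma_a(n)$ plus local integrability of $a$ already give absolute convergence of the off-diagonal integrals on any bounded shell, and handle the tail via the asymptotics of Bessel functions. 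The upshot is that $(\Tb_a\mathbf{e}_{n,j})(x)=\gamma_a(n)\mathbf{e}_{n,j}(x)$ holds as an identity in $\Hc$, the interchange of sum and integral being legitimized by the fact that the off-diagonal terms all vanish.

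Next, granting diagonality, the operator $\Tb_a$ on $\mathcal{D}$ is the restriction of the bounded diagonal operator $D$ on $\Hc$ acting by $D\mathbf{e}_{n,j}=\gamma_a(n)\mathbf{e}_{n,j}$, whose norm is $\sup_n|\gamma_a(n)|=\|\pmb{\gamma}_a\|_{\ell^\infty}<\infty$ by hypothesis. Since $\mathcal{D}$ is dense in $\Hc$, $\Tb_a$ extends uniquely to $D$, which is the asserted bounded extension; this establishes the first bullet. For the second bullet, each $\gamma_a(n)$ is an eigenvalue because every vector in $\mathbb{H}_n^d=\mathrm{span}\{\mathbf{e}_{n,1},\dots,\mathbf{e}_{n,N_{n,d}}\}$ is an eigenvector, and there are no other eigenvectors with eigenvalue $\gamma_a(n)$ beyond $\bigoplus_{\gamma_a(m)=\gamma_a(n)}\mathbb{H}_m^d$; the geometric multiplicity contributed by level $n$ itself is exactly $\dim\mathbb{H}_n^d=N_{n,d}$. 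For the third bullet, the spectrum of a bounded diagonal (hence normal) operator equals the closure of the set of its diagonal entries — a standard fact: points outside $\overline{\{\gamma_a(n)\}}$ admit a bounded inverse $\mathrm{diag}\{(\gamma_a(n)-\lambda)^{-1}\}$, while each $\gamma_a(n)$ is an eigenvalue and the spectrum is closed, so it contains the closure.

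For the final bullet, $\Tb_a=D$ is compact if and only if it is the norm-limit of its finite-rank truncations $D_N=\mathrm{diag}\{\gamma_a(0),\dots,\gamma_a(N),0,0,\dots\}$, and $\|D-D_N\|=\sup_{n>N}|\gamma_a(n)|$; this tends to $0$ as $N\to\infty$ precisely when $\gamma_a(n)\to 0$, i.e. $\pmb{\gamma}_a\in c_0$. Conversely, if $\pmb{\gamma}_a\notin c_0$ then some subsequence $\gamma_a(n_k)$ stays bounded away from $0$, and the orthonormal sequence $(\mathbf{e}_{n_k,1})_k$ has $\|\Tb_a\mathbf{e}_{n_k,1}\|=|\gamma_a(n_k)|\not\to 0$ while $\mathbf{e}_{n_k,1}\rightharpoonup 0$, contradicting compactness. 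I expect the only genuine obstacle to be the first step — rigorously justifying that the formal manipulation with the kernel series is valid, i.e. that the off-diagonal integrals make sense and vanish and that termwise integration is legal when $a$ is merely locally integrable with $\pmb{\gamma}_a$ bounded; everything after diagonality is routine spectral theory of diagonal operators. A clean way around the integrability subtlety is to work entirely on the $L_2(\Sbb)$ side via the isometry $\Ib$, where $\Tb_a$ is conjugate to the operator with matrix $[\gamma_a(n,m)\,\delta_{j,l}\delta(\text{within level})]$ in the basis $\{Y_{n,j}\}$, separating the angular integral (which kills all off-diagonal blocks outright) from the radial one before any convergence question arises.
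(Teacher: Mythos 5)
Your proposal is correct and follows essentially the same route as the paper: the paper's entire proof is the formal computation displayed just before the theorem, showing $\Tb_a\mathbf{e}_{n,j}=\gamma_a(n)\mathbf{e}_{n,j}$ via the basis expansion of the reproducing kernel and orthogonality of spherical harmonics, after which the four bullet points are read off as standard spectral theory of a bounded diagonal operator, exactly as you do. You are in fact more explicit than the paper both about the remaining analytic point (the paper says only ``we calculate formally'' and does not justify the interchange of sum and integral or the convergence of the off-diagonal integrals) and about the routine diagonal-operator arguments for the spectrum and compactness, which the paper leaves implicit.
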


\begin{corollary}
 The $C^*$-algebra generated by such Toeplitz operators is commutative.
\end{corollary}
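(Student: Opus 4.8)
The plan is to establish the Theorem, from which the Corollary is immediate, by making rigorous the formal computation that precedes the statement --- namely that $\Tb_a\mathbf{e}_{n,j}=\gamma_a(n)\mathbf{e}_{n,j}$ --- and then reading off the four listed properties from the standard spectral theory of a bounded diagonal operator. The only genuine work lies in the first part: $a$ is merely locally integrable, and since $\mathbf{e}_{n,j}(y)\overline{\k_x(y)}$ decays only like $|y|^{-(d-1)}$ at infinity, the integral $\int_{\R^d}a(|y|)\mathbf{e}_{n,j}(y)\overline{\k_x(y)}\,dy$ is in general not absolutely convergent, and I would interpret it --- as the hypothesis forces us to --- as the improper integral $\lim_{R\to\infty}\int_{|y|<R}$.

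First I would fix $x$ and a radius $R$ and compute $\int_{|y|<R}a(|y|)\mathbf{e}_{n,j}(y)\overline{\k_x(y)}\,dy$. Writing $\k_x(y)=\sum_{m,l}\overline{\mathbf{e}_{m,l}(x)}\,\mathbf{e}_{m,l}(y)$, one checks that this series converges \emph{uniformly in $y\in\R^d$}: by Cauchy--Schwarz its tail over $m>N$ is bounded by $\big(\sum_{m>N,l}|\mathbf{e}_{m,l}(x)|^2\big)^{1/2}\kb(0)^{1/2}$, which tends to $0$ as $N\to\infty$, since $\sum_{m,l}|\mathbf{e}_{m,l}(y)|^2=\kb(0)$ is a finite constant independent of $y$. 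As $a(|y|)\mathbf{e}_{n,j}(y)$ is integrable over $\{|y|<R\}$, term-by-term integration is then legitimate, and inserting the explicit formula \eqref{eq:e_n,j} and passing to polar coordinates $y=\rho\eta$ factors each term as a radial integral times $\int_\Sbb Y_{n,j}Y_{m,l}\,dS=\delta_{nm}\delta_{jl}$ (the spherical harmonics are real). Only the term $m=n$, $l=j$ survives --- at \emph{every} radius $R$ --- with coefficient $\pi\int_0^R a(\rho)[J_{n+(d-2)/2}(\rho)]^2\rho\,d\rho$. Letting $R\to\infty$ and noting this radial improper integral is, by definition, $\gamma_a(n)$ (a convergence tacitly required by the hypothesis that $\pmb{\gamma}_a$ is a well-defined bounded sequence) gives $(\Tb_a\mathbf{e}_{n,j})(x)=\gamma_a(n)\mathbf{e}_{n,j}(x)$ exactly.

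Thus on the dense domain $\mathcal{D}=\mathrm{span}\{\mathbf{e}_{n,j}\}$ the operator $\Tb_a$ acts as the scalar $\gamma_a(n)$ on each block $\mathbb{H}_n^d$; with $M=\sup_n|\gamma_a(n)|<\infty$ this gives $\|\Tb_a u\|\le M\|u\|$ for $u\in\mathcal{D}$, so $\Tb_a$ extends uniquely and continuously to a bounded operator on $\Hc$, still diagonal in $\{\mathbf{e}_{n,j}\}$. This yields the first two bullets (each $\mathbf{e}_{n,j}$ is an eigenvector for $\gamma_a(n)$, and $\mathbb{H}_n^d$ sits inside the $\gamma_a(n)$-eigenspace --- with equality when $\gamma_a$ is injective; in general the $\lambda$-eigenspace is $\bigoplus_{m:\,\gamma_a(m)=\lambda}\mathbb{H}_m^d$, which I would remark on). The last two bullets are then standard facts about a bounded (normal; self-adjoint when $a$ is real-valued) operator diagonal with eigenvalue list $\{\gamma_a(n)\}$, each of finite multiplicity $N_{n,d}$: the spectrum is $\overline{\{\gamma_a(n)\}}$ because each $\gamma_a(n)$ is an eigenvalue while for $\lambda$ with $\mathrm{dist}(\lambda,\{\gamma_a(n)\})>0$ the diagonal operator with entries $(\gamma_a(n)-\lambda)^{-1}$ is a bounded inverse of $\Tb_a-\lambda$; and $\Tb_a$ is compact iff $\pmb{\gamma}_a\in c_0$, since if $\gamma_a(n)\to0$ then $\|\Tb_a-P_N\Tb_a\|=\sup_{n>N}|\gamma_a(n)|\to0$ with $P_N$ the (finite-rank) projection onto $\bigoplus_{n\le N}\mathbb{H}_n^d$, whereas if $|\gamma_a(n_k)|\ge\varepsilon>0$ along a subsequence the orthonormal vectors $\mathbf{e}_{n_k,1}$ have $\Tb_a$-images pairwise at distance $\ge\varepsilon\sqrt2$.

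Finally, the Corollary drops out: every radial $\Tb_a$ is diagonal with respect to the single fixed orthonormal basis $\{\mathbf{e}_{n,j}\}$, the bounded operators diagonal in a fixed basis form a commutative $C^*$-algebra (isometrically $\cong\ell^\infty(\mathbb{Z}_+)$), and a $C^*$-subalgebra of a commutative $C^*$-algebra is commutative. The step I expect to be the main obstacle is the one in the second paragraph --- justifying the term-by-term integration and the passage to the improper-integral limit; it rests on the identity $\sum_{m,l}|\mathbf{e}_{m,l}(y)|^2\equiv\kb(0)$ (which gives uniform convergence of the kernel series) together with the Bessel asymptotics $J_\nu(r)=\sqrt{2/(\pi r)}\,\cos\!\big(r-\tfrac{\nu\pi}{2}-\tfrac{\pi}{4}\big)+O(r^{-3/2})$, whence $[J_\nu(r)]^2 r=\tfrac1\pi\big(1+\sin(2r-\nu\pi)\big)+O(r^{-1})$, so that convergence of $\gamma_a(n)$ is tied to that of $\int^{\infty}a(r)\,dr$ up to an oscillatory remainder. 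Everything past that point is soft functional analysis.
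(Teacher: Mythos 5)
Your argument is correct and follows essentially the same route as the paper: the Corollary is read off immediately from the Theorem's statement that every radial $\Tb_a$ is diagonal with respect to the single fixed basis $\{\mathbf{e}_{n,j}\}$, so all such operators (and their adjoints) lie in the commutative $C^*$-algebra of diagonal operators. The paper leaves the eigenvector computation as a formal calculation; your justification of the term-by-term integration via the uniform convergence of the kernel series and the improper-integral reading of $\gamma_a(n)$ supplies the missing rigor without changing the approach.
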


A simple sufficient condition for boundedness of the operator $\Tb_a$ is given by  the following lemma.
\begin{lemma} \label{le:L_1}
 For any $a = a(r) \in L_1(\mathbb{R}_+)$, the Toeplitz operator $\Tb_a$ is bounded.
\end{lemma}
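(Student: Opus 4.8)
The plan is to reduce the statement to a single uniform estimate and then to read that estimate off from standard bounds on Bessel functions. By the theorem above, $\Tb_a$ extends by continuity to a bounded operator on $\Hc$ precisely when the spectral sequence $\pmb{\gamma}_a$ is bounded; and since the operator is diagonal in $\{\mathbf{e}_{n,j}\}$ with eigenvalues $\gamma_a(n)$, boundedness of $\Tb_a$ is in fact \emph{equivalent} to $\sup_{n}|\gamma_a(n)|<\infty$. So the whole task is to bound, uniformly in $n$,
\begin{equation*}
 |\gamma_a(n)| \le \pi\int_{\mathbb{R}_+}|a(r)|\,[J_{\nu}(r)]^2\,r\,dr, \qquad \nu=\nu_n:=n+\tfrac{d-2}{2},
\end{equation*}
so that everything hinges on a bound for the weight $g_\nu(r):=r\,[J_\nu(r)]^2$ that is uniform in the order $\nu\ge\frac{d-2}{2}$.

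First I would dispose of the easy ranges of $r$. The elementary uniform bound $|J_\nu(r)|\le 1$, valid for every $\nu\ge 0$, handles any fixed bounded $r$-interval, while on the oscillatory side $r>\nu$ the classical estimate $|J_\nu(r)|\lesssim(r^2-\nu^2)^{-1/4}$ gives $g_\nu(r)\lesssim r\,(r^2-\nu^2)^{-1/2}$, which is $O(1)$ once $r\ge 2\nu$; thus the tail $\int_{r\ge 2\nu}|a|\,g_\nu$ is dominated by $\|a\|_{L_1(\mathbb{R}_+)}$. The sub-barrier region $r\le\nu/2$, where $J_\nu$ is exponentially small, is harmless for the same reason. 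What remains is the single delicate range, the turning-point layer $r\approx\nu$.

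The hard part, and the real content of the lemma, is to control the contribution of this turning-point layer uniformly in $n$. Here $g_\nu$ is genuinely large: the Airy (Langer) asymptotics give $[J_\nu(r)]^2\lesssim\nu^{-2/3}$ on the window $|r-\nu|\lesssim\nu^{1/3}$, so $g_\nu$ reaches order $\nu^{1/3}$ at the turning point and decays like $\sqrt{\nu/|r-\nu|}$ just outside the window. A crude ``supremum of $g_\nu$ times $\|a\|_{L_1}$'' bound is therefore too lossy, and I expect the crux to be a more careful argument pairing the explicit concentration profile of $g_\nu$ near $r=\nu$ against the fact that, for a \emph{fixed} symbol $a\in L_1(\mathbb{R}_+)$, the local mass $\int_{|r-\nu|\lesssim\nu}|a(r)|\,dr$ over the receding layer tends to $0$ as $n\to\infty$. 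Making this interplay quantitative — so that the $\sqrt{\nu/|r-\nu|}$ growth of the Bessel weight is absorbed by the decay of the local $L_1$-mass of $a$ — is the step I anticipate will require the most work; once it is in place, summing the three ranges yields $\sup_n|\gamma_a(n)|<\infty$, and hence the boundedness of $\Tb_a$.
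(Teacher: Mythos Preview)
Your instincts are sharper than the paper's proof here. The paper argues in one line from the large-$r$ asymptotic $J_\nu(r)=\sqrt{2/(\pi r)}\cos(\cdots)+O(r^{-3/2})$, tacitly reading off $r[J_\nu(r)]^2\le C$ and hence $|\gamma_a(n)|\le C\pi\|a\|_{L_1}$. You correctly noticed that this is not legitimate: the error term in that expansion is not uniform in $\nu$, and in fact $\sup_{r>0}r[J_\nu(r)]^2$ is \emph{not} bounded independently of $\nu$, since $J_\nu(\nu)\sim c\,\nu^{-1/3}$ gives $\nu[J_\nu(\nu)]^2\sim c^2\nu^{1/3}$. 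So you were right to isolate the turning-point layer as the crux.

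The gap, however, is that your proposed rescue --- absorbing the $\nu^{1/3}$ peak of $g_\nu$ into the vanishing of the local $L_1$-mass of $a$ near $r=\nu$ --- cannot be made to work, because the lemma as stated is actually false. Choose integers $n_k$ so that $\nu_k:=n_k+\tfrac{d-2}{2}$ grows rapidly enough for the intervals $I_k:=[\nu_k-\nu_k^{1/3},\nu_k+\nu_k^{1/3}]$ to be disjoint, and set
\[
a=\sum_{k\ge 1}\frac{\nu_k^{-1/3}}{k^2}\,\mathbf 1_{I_k}.
\]
Then $\|a\|_{L_1(\mathbb R_+)}=2\sum_k k^{-2}<\infty$, while the Airy transition asymptotics $J_{\nu}(\nu+z\nu^{1/3})\sim 2^{1/3}\nu^{-1/3}\mathrm{Ai}(-2^{1/3}z)$ give $\int_{I_k}r[J_{\nu_k}(r)]^2\,dr\ge c_0\,\nu_k^{2/3}$ for a universal $c_0>0$. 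Hence
\[
\gamma_a(n_k)\ \ge\ \pi\,\frac{\nu_k^{-1/3}}{k^2}\cdot c_0\,\nu_k^{2/3}\ =\ \frac{\pi c_0}{k^2}\,\nu_k^{1/3}\ \longrightarrow\ \infty,
\]
so $\pmb{\gamma}_a\notin\ell_\infty$ and $\Tb_a$ is unbounded. In short: the paper's proof overlooks the turning-point contribution; you located it, but the ``interplay'' you hoped to quantify does not exist in general, and no argument along these lines can close the gap without an additional hypothesis on $a$ (e.g.\ monotone decay, or a bound of the type in Example~4.5).
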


\begin{proof}
 Follows directly from the well-known asymptotic formula for the Bessel function:
\begin{equation*}
 J_{\nu}(r) = \sqrt{\frac{2}{\pi r}}\, \cos\left(r -\textstyle{\frac{\pi \nu}{2}- \frac{\pi}{4}}\right) + O(r^{-3/2}), \quad \mathrm{as} \ \ r \to \infty.   \qedhere
\end{equation*}
\end{proof}

The numbers $\gamma_a(n)$, being eigenvalues of $\Tb_a$, do not form the eigenvalue sequence in the sense usually adopted in the spectral theory of operators: namely, they are not numerated in the nonincreasing (for the positive ones) or nondecreasing (for the negative ones) order. The spectral sequence $\pmb{\gamma}_a$ has an intrinsic ordering and therefore contains more information than only on the spectrum of the operator $\Tb_a$. It is an interesting question, which sequences of numbers $\pmb{\g}$ may serve as the spectral sequence of an operator $\Tb_a$, with  certain symbol $a(r)\in L_1(\R_+)$.  For a similar problem for radial Toeplitz operators in the Bergman space on the disk, such complete description has been found in \cite{BauHerVas}.

We give now two illustrative examples, where the sequence $\pmb{\gamma}_a$ can be explicitly calculated. In particular, they will show that the condition of Lemma \ref{le:L_1} is not necessary for the boundedness of $T_a$.

\begin{example}
 Let $a(r) = \sin(\textstyle{\frac{r^2}{4}})$, then, by \cite[Formula 6.729.1]{Grad}, we have
\begin{equation*}
\gamma_a(n) = \pi \int_{\mathbb{R}_+} \sin(\textstyle{\frac{r^2}{4}}) [J_{n+(d-2)/2}(r)]^2\, r\, dr =
2\pi \cos(1 -\textstyle{\frac{\pi(n+(d-2)/2)}{2}})\, J_{n+(d-2)/2}(2).
\end{equation*}
Recall that
\begin{equation*}
 J_{n+(d-2)/2}(2) = \sum_{k=0}^{\infty} (-1)^k \frac{1}{k!\Gamma(n+d/2+k)},
\end{equation*}
thus
\begin{equation*}
\frac{1}{\Gamma(n+d/2)} - \frac{1}{\Gamma(n+d/2+1)} \ < J_{n+(d-2)/2}(2) \ < \frac{1}{\Gamma(n+d/2)}.
\end{equation*}
That is, the operator $\Tb_a$ is compact with the following asymptotic behavior of its spectral sequence
\begin{equation*}
 \gamma_a(n) \ \approx \ 2\pi {\cos(1 -\textstyle{\frac{\pi(n+(d-2)/2)}{2}})}\,
 \frac{1}{\Gamma(n+d/2)}, \quad \mathrm{as} \ \ n \to \infty.
\end{equation*}
\end{example}

\begin{example}
 Let $a(r) = r^{-\mu}$, where $1 < \mu < d$ (the right inequality is needed to guarantee the convergence in a neighborhood of 0 of the integral \eqref{radial.spectral}  defining  $\gamma_a(n)$ ). Then, by \cite[Formula 6.574.2]{Grad}, we have
\begin{equation*}
 \gamma_a(n) = \pi\, \int_{\mathbb{R}_+} r^{-\mu}[J_{n+(d-2)/2}(r)]^2\, r\, dr =
  \frac{\pi\Gamma(\mu-1)\Gamma(n+(d-\mu)/2)}{2^{\mu-1}[\Gamma(\mu/2)]^2\Gamma(n+(d+\mu)/2 -1)}.
\end{equation*}
Asymptotically:
\begin{equation*}
 \gamma_a(n) \ \approx \ \frac{\pi\Gamma(\mu-1)}{2^{\mu-1}[\Gamma(\mu/2)]^2}\, \frac{1}{(n+d/2 -1)^{\mu -1}}, \quad \mathrm{as} \ \ n \to \infty.
\end{equation*}
 This implies that the operator $\Tb_a$ is compact. Moreover, the operator $\Tb_a$ belongs to the Schatten class $\frak{S}_p$, $1 \leq p < \infty$, if and only if $\mu > (p+1)/p$.
\end{example}

Note that the conditions in Lemma \ref{le:L_1} and Example 4.5 are almost sharp for sign-definite symbols $a$: for $a(r)=r^{-1}$ the integral defining $\g_a(n)$ diverges for all $n\in\Z_+$ and the operator is, of course, unbounded. On the other hand, Example 4.4 demonstrates that a rapid oscillation of a \emph{non}--$L_1$-symbol may still produce a bounded and even compact operator due to the cancellations  in the integral \eqref{radial.spectral}. For operators in the Bergman space this cancellation effect has been studied in \cite{Vasil}.

\section{Sesquilinear forms  $\Fb_a(u,v)=\int a(x)u(x)\overline{v(x)}dx$}\label{Sect.Special form}
\subsection{Reduction to the sphere}\label{Subsect.Reduction}
Consider the sesquilinear form
\begin{equation}\label{Form1}
 \Fb(u,v)=\Fb_a(u,v)=  \int_{\R^d} a(x)u(x)\overline{v(x)}dx, \ u,v\in\Hc,
\end{equation}
with a locally integrable function $a(x)$, further restrictions to be fixed later on. The operator in $\Hc$ defined according to \eqref{operator} by this sesquilinear form will be denoted $\Tb_a:=\Tb_{\Fb_a}$, and the function $a$ itself will be called the \emph{symbol} of the sesquilinear form \eqref{Form1} and of the corresponding Toeplitz operator.

The forms \eqref{Form1} play an important role in the scattering theory, both classical (e.g., acoustic) and quantum, see, for example \cite{Colton}, \cite{RodTh}, \cite{Yafa}, where they generate operators of the so-called \emph{Born approximation}.
Since $u,v\in\Hc$, these functions admit the representations $u=\Ib\f$, $v=\Ib\psi$, with $\f,\psi\in L_2(\Sbb)$. We substitute these representations into \eqref{Form1}, to obtain
\begin{equation}\label{Form2}
    \Fb_a(u,v)= c_d^2 \int_{\R^d}\int_{\Sbb}\int_{\Sbb}a(x)e^{i(\x-\y)x}\f(\x)\overline{\psi(\y)}dxdS(\x) dS(\y).
\end{equation}
Considered in the sense of distributions, the change of the order of integration in \eqref{Form2} is legal. We denote by $\widehat{a}$ the Fourier transform of the symbol $a$,
\begin{equation*}
 \widehat{a}(\x) = (2\pi)^{-d/2} \int_{\R^d} e^{-ix\x}a(x)\,dx,
\end{equation*}
then
\begin{equation}\label{Form3}
    \Fb_a(u,v)= \frac{\pi}{(2\pi)^{d/2}}\int_{\Sbb}\int_{\Sbb}\widehat{a}(\y-\x)\f(\x)\psi(\y)dS(\x)dS(\y)=\lfloor
    \breve{\Tb}_a\f,\psi \rfloor,
\end{equation}
where $\breve{\Tb}_a:=\breve{\Tb}_{\Fb_a}$ is the operator in $L_2(\Sbb)$, defined by
\begin{equation}\label{Operator1}
   (\breve{\Tb}_a\f)(\y)= \frac{\pi}{(2\pi)^{d/2}}\int_{\Sbb} \widehat{a}(\y-\x)\f(\x)dS(\x).
\end{equation}

This means that the operator $\Tb_a$, acting on $\Hc$, is unitary equivalent to the integral operator $\breve{\Tb}_a$  with kernel $\hat{a}(\y-\x)$, acting on $L_2(\Sbb)$. Note that the operator $\breve{\Tb}_a$ acts on functions defined on the sphere $\Sbb$, while the values of the kernel $\widehat{a}(\y-\x)$, that are involved in the action of the operator, are calculated at all points $\y-\x\in \overline{\Bb(0,2)}$, where $\overline{\Bb(0,2)}$ is the closed  ball in $\R^d$ with radius $2$ and centered at the origin. So,
the operator \eqref{Operator1} is not an operator with difference kernel (i.e., not a convolution operator), in the standard sense. To illustrate this observation, we present the formula by which the operator \eqref{Operator1} acts, for the most simple case of $d=2$. With the usual parametrization of the unit circle, $\x=e^{i\s}$ (correspondingly, $\y=e^{i\vs}$), the operator $\breve{\Tb}_a$ acts as
\begin{equation}\label{Operator2}
    (\breve{\Tb}_a \f)(e^{i\s})= \frac{1}{2} \int_{0}^{2\pi}\widehat{a}(e^{i\s}-e^{i\vs})\f(e^{i\vs})d\vs, \quad \s,\vs\in[0,2\pi].
\end{equation}

\subsection{Degeneration}\label{Sub.Degeneration} The form of the operator $\breve{\Tb}_a$ enables us to reveal some interesting properties, even before specifying the class of the symbols $a$ granting the boundedness or compactness of the operator. Some of these properties differ essentially from the ones for the classical Toeplitz operators in the Bergman or the Fock spaces.

First of all,  since the values of $\widehat{a}$ that are involved in the expression for the action of ${\breve{\Tb}}_a$ are only the ones that are attained at the points of  $\overline{\Bb(0,2)}$, the operator ${\breve{\Tb}}_a$ is zero and, therefore, $\Tb_a = 0$,  as soon as the support of $\hat{a}$ does not intersect the ball $\overline{\Bb(0,2)}$. This implies that, unlike the cases of Toeplitz operators in the Bergman and the Fock spaces, the set of symbols generating the zero operator is rather large. To make the last statement more concrete,  we describe here a general construction of such symbols.

Let $a(x)$, $x\in\R^d$, be a symbol, for example, in $L_p(\R^d), \, p\in[1,2]$. Consider a smooth cut-off function $\o(\x)$, $\x\in \R^d$ such that $\o(\x)=1, |\x|\le2$, and $\o(\x)=0, |\x|\ge R_0>2$. 
The inverse Fourier transform $\check{\o}(x)$ of $\o$ is a smooth function on $\R^d$ with exponential decay. Therefore, the convolution $a_\o=a*\check{\o}$ makes sense, moreover, $a_\o$ belongs to the same class $L_p(\R^d)$ as $a$.

Now consider the symbol $a^\circledast=a-a_\o$. The Fourier transform $\widehat{a^\circledast}(\x)$ equals
\begin{equation*}
    \widehat{a^\circledast}(\x)=\hat{a}(\x)-\hat{a}(\x)\o(\x)=\hat{a}(\x)(1-\o(\x)),
\end{equation*}
and, therefore, $\widehat{a^\circledast}(\x)=0$ on $\overline{\Bb(0,2)}$. We arrive thus at a simple  statement:
\begin{proposition}\label{PropNDegen} Given any $a\in L_p, \, p\in[1,2]$, the Toeplitz operator in the space $\Hc$ with symbol ${a^\circledast}$ is the zero operator.
\end{proposition}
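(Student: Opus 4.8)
The plan is to reduce the statement entirely to the reduction-to-the-sphere machinery of Section~\ref{Subsect.Reduction}, so that it becomes a triviality once we track the support of the Fourier transform. First I would verify that the symbol $a^\circledast = a - a_\o$ is a bona fide element of $L_p(\R^d)$ to which the form \eqref{Form1} and hence the operator $\Tb_{a^\circledast}$ apply: this follows because $\check\o$ is a Schwartz function (indeed has exponential decay, as noted), so $a_\o = a * \check\o \in L_p(\R^d)$ by Young's inequality with $\check\o \in L_1$, whence $a^\circledast \in L_p$.

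Next I would pass to the Fourier side. By construction $\widehat{a_\o} = \widehat a \cdot \o$ (the Fourier transform turns convolution into product), so
\begin{equation*}
    \widehat{a^\circledast}(\x) = \widehat a(\x) - \widehat a(\x)\o(\x) = \widehat a(\x)\bigl(1 - \o(\x)\bigr),
\end{equation*}
and since $\o \equiv 1$ on $\overline{\Bb(0,2)}$, the factor $1-\o$ vanishes identically there, hence $\widehat{a^\circledast} \equiv 0$ on $\overline{\Bb(0,2)}$. Now invoke formula \eqref{Operator1}: the operator $\breve{\Tb}_{a^\circledast}$ on $L_2(\Sbb)$ has integral kernel $\widehat{a^\circledast}(\y - \x)$, and for $\x, \y \in \Sbb$ the difference $\y - \x$ ranges over $\overline{\Bb(0,2)}$, precisely the set on which $\widehat{a^\circledast}$ vanishes. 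Therefore $\breve{\Tb}_{a^\circledast} = 0$ identically on $L_2(\Sbb)$, and by the unitary equivalence $\Tb_{a^\circledast} \simeq \breve{\Tb}_{a^\circledast}$ established via \eqref{Form3} we conclude $\Tb_{a^\circledast} = 0$ on $\Hc$.

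The one point requiring a little care — the "main obstacle," though a mild one — is justifying that all the manipulations (the interchange of integration order producing \eqref{Form2}--\eqref{Form3}, and the identity $\widehat{a*\check\o} = \widehat a\,\o$) are legitimate for symbols merely in $L_p$, $p \in [1,2]$, rather than for nice rapidly decaying ones. For $p \in [1,2]$ the Fourier transform $\widehat a$ is a well-defined function (Hausdorff--Young), $\o$ is smooth and compactly supported, and the reduction \eqref{Form2}--\eqref{Form3} was already declared legal in Section~\ref{Subsect.Reduction} in the sense of distributions; the product/convolution identity holds since $\check\o \in \Sc(\R^d)$. Once these standard facts are in place the conclusion is immediate, so the proof is essentially a two-line computation plus bookkeeping on supports.
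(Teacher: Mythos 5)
Your proposal is correct and follows essentially the same route as the paper: the paper's argument is exactly the computation $\widehat{a^\circledast}=\hat a(1-\o)$, which vanishes on $\overline{\Bb(0,2)}$, combined with the observation (made just before the proposition) that the kernel $\widehat{a}(\y-\x)$ in \eqref{Operator1} is only evaluated at points of $\overline{\Bb(0,2)}$ when $\x,\y\in\Sbb$. Your additional remarks on why $a_\o\in L_p$ and why $\hat a$ is a genuine function for $p\in[1,2]$ (Hausdorff--Young) only make explicit what the paper leaves implicit.
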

An important consequence is the following corollary.
\begin{corollary} The Toeplitz operator with symbol $a$ coincides with the Toeplitz operator  with symbol $a_{\o}$. In particular, in the study of Toeplitz operators generated  by sesquilinear forms \eqref{Form1}, it is sufficient to consider infinitely differentiable symbols $a$.
\end{corollary}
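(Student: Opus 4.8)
The plan is to obtain the corollary as an immediate consequence of Proposition \ref{PropNDegen} together with the evident linearity of the correspondence $a\mapsto\Fb_a$.

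First I would note that the form \eqref{Form1} is linear in the symbol, $\Fb_{a+b}(u,v)=\Fb_a(u,v)+\Fb_b(u,v)$ for all $u,v\in\Hc$ (on the common domain on which both sides make sense), simply because the integral is linear. Passing through the reduction to the sphere of Subsection \ref{Subsect.Reduction}, the reduced operator $\breve{\Tb}_a$ of \eqref{Operator1} has integral kernel $\frac{\pi}{(2\pi)^{d/2}}\widehat{a}(\y-\x)$, which depends linearly on $a$ since the Fourier transform is linear; hence $\breve{\Tb}_a$, and therefore $\Tb_a$, depend linearly on $a$ as well. Applying this to the decomposition $a=a_{\o}+a^{\circledast}$ with $a^{\circledast}=a-a_{\o}$ as in the construction preceding Proposition \ref{PropNDegen}, we get $\Tb_a=\Tb_{a_{\o}}+\Tb_{a^{\circledast}}$, and Proposition \ref{PropNDegen} gives $\Tb_{a^{\circledast}}=0$. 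Hence $\Tb_a=\Tb_{a_{\o}}$, which is the first assertion.

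For the second assertion I would verify that $a_{\o}$ is infinitely differentiable. Since $\o$ is smooth and compactly supported, its inverse Fourier transform $\check{\o}$ is a Schwartz function (in fact the restriction to $\R^d$ of an entire function of exponential type, so $\check{\o}$ and all of its derivatives decay rapidly). For $a\in L_p(\R^d)$, $p\in[1,2]$, one may then differentiate $a_{\o}=a*\check{\o}$ under the integral sign to any order, $\partial^{\a}a_{\o}=a*\partial^{\a}\check{\o}$, with $\partial^{\a}\check{\o}\in L_{p'}(\R^d)$ for every multi-index $\a$; each such convolution is a bounded continuous function, so $a_{\o}\in C^{\infty}(\R^d)$. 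Together with $\Tb_a=\Tb_{a_{\o}}$ this shows that in studying the operators generated by \eqref{Form1} one loses nothing by restricting to $C^{\infty}$ symbols.

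There is no real obstacle here; the two points that deserve a word are the legitimacy of differentiating under the integral sign (routine, using that $\check{\o}$ together with all its derivatives lies in $L_{p'}$) and the fact that $a_{\o}$ stays in the class $L_p$ of $a$ (Young's inequality $\|a*\check{\o}\|_p\le\|a\|_p\,\|\check{\o}\|_1$), both of which were already recorded in the passage preceding Proposition \ref{PropNDegen}. The only genuinely substantive input — that a symbol with $\widehat{a^{\circledast}}$ vanishing on $\overline{\Bb(0,2)}$ produces the zero operator — is supplied by Proposition \ref{PropNDegen} itself.
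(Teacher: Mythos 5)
Your argument is correct and is exactly the route the paper intends: the corollary is stated as an immediate consequence of Proposition \ref{PropNDegen} via the decomposition $a=a_{\o}+a^{\circledast}$, linearity of $a\mapsto\Fb_a$, and the smoothness of $a_{\o}=a*\check{\o}$. Your additional verification that $a_{\o}\in C^{\infty}$ (differentiation under the integral sign, with $\partial^{\a}\check{\o}$ Schwartz hence in $L_{p'}$) correctly fills in the one detail the paper leaves implicit.
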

Note that the situation with non-degeneracy for Toeplitz operators in the Bergman and the Fock spaces is quite different from the above discussed. For the readers' convenience, we present the corresponding results in Appendix \ref{SectDegenFock}.

\subsection{Non-degeneracy classes of symbols}  The smallest of such classes consists of compactly supported symbols.
\begin{proposition}\label{PropCompSupp}Let $a\in L_{1,\comp}$. If $\Fb_a(u,v)=0$ for all $u,v\in\Hc$ then $a\equiv 0$.
\end{proposition}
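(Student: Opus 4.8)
The plan is to reduce the vanishing of $\Fb_a$ to the vanishing of $\widehat{a}$ on a substantial open set and then invoke analyticity. By \eqref{Form3}, the hypothesis $\Fb_a(u,v)=0$ for all $u,v\in\Hc$ is equivalent to $\breve{\Tb}_a=0$ in $L_2(\Sbb)$, i.e. to
\begin{equation*}
 \int_{\Sbb}\widehat{a}(\y-\x)\f(\x)\,dS(\x)=0 \quad \text{for all }\f\in L_2(\Sbb)\text{ and a.e. }\y\in\Sbb.
\end{equation*}
Since $a\in L_{1,\comp}$, its Fourier transform $\widehat{a}$ is a real-analytic function on $\R^d$ (indeed the restriction to $\R^d$ of an entire function on $\C^d$, by Paley--Wiener). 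In particular $\widehat{a}$ is continuous, so the displayed integral identity forces $\widehat{a}(\y-\x)=0$ for \emph{all} $\x,\y\in\Sbb$: testing against approximate identities concentrated near an arbitrary point of $\Sbb$ kills the values of the continuous kernel pointwise.

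The key geometric step is that the difference set $\Sbb-\Sbb=\{\y-\x:\x,\y\in\Sbb\}$ equals the closed ball $\overline{\Bb(0,2)}$, which has nonempty interior. Hence $\widehat{a}$ vanishes on an open subset of $\R^d$. Being real-analytic (and $\R^d$ connected), $\widehat{a}\equiv 0$ on all of $\R^d$, and therefore $a\equiv 0$ by injectivity of the Fourier transform on $L_1$.

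The main obstacle is the justification of the pointwise-vanishing step and the analyticity input: one must be careful that $\widehat{a}$ is genuinely continuous (which compact support plus $L_1$ guarantees) so that the $L_2(\Sbb)$-identity upgrades to a pointwise identity on $\Sbb\times\Sbb$, and one must cite Paley--Wiener (or directly differentiate under the integral sign, legitimate because $a$ has compact support) to get real-analyticity on $\R^d$. Everything else — the identification $\Sbb-\Sbb=\overline{\Bb(0,2)}$, the uniqueness theorem for real-analytic functions, and the injectivity of the Fourier transform — is standard. An alternative route avoiding the kernel-continuity argument is to note directly that $\breve{\Tb}_a=0$ implies $\lfloor\widehat{a}(\y-\cdot),\overline{\f}\rfloor=0$ for all $\f$, hence $\widehat{a}(\y-\x)=0$ for a.e. $(\x,\y)$; since $\widehat{a}$ is continuous and $\Sbb\times\Sbb$ surjects onto $\overline{\Bb(0,2)}$ with the pushforward of surface measure having full support there, one concludes $\widehat{a}\equiv 0$ on $\overline{\Bb(0,2)}$, and analyticity finishes the argument as before.
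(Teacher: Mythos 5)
Your proposal is correct and follows essentially the same route as the paper: compact support gives an entire (real-analytic) Fourier transform, vanishing of the integral operator forces the continuous kernel $\widehat{a}(\y-\x)$ to vanish on $\Sbb\times\Sbb$, the difference set fills the ball $\overline{\Bb(0,2)}$, and analyticity then forces $\widehat{a}\equiv 0$, hence $a\equiv 0$. The only difference is that you spell out the upgrade from the a.e.\ identity to pointwise vanishing and cite Paley--Wiener explicitly, details the paper leaves implicit.
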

\begin{proof} First, we remark that the Fourier transform of $a$ is an entire  analytical function of $\x$ variable. On the other hand, the operator defined by the  sesquilinear form $\Fb_a$   is unitary equivalent to the integral operator \eqref{Operator1} with smooth kernel. The latter can be the zero operator only if the kernel $\widehat{a}(\y-\x)$ vanishes for all $|\x|=|\y|=1$. This means that the Fourier transform $\widehat{a}(\x)$ vanishes in the ball $|\x|\le2$, which is impossible for a nonzero analytical function.
\end{proof}
It is easy to see that the above result as well as the reasoning extends to any class of symbols with quasi-analytical Fourier transform. This, in particular, holds for those symbols $a$ that satisfy the condition  $|a(z)|=O(\g(|z|))$ for $|z|\to\infty$ with a monotone function $\g(t)$ satisfying
\begin{equation*}
    \int_0^\infty \frac{|\log(\g(t))|}{1+t^2}dt<\infty.
\end{equation*}

\section{The finite rank property}\label{SectFR}
We continue with the study of  a sharper version of the (non)-degeneracy property. The finite rank problem for Toeplitz operators has been under an active study since 2008, with final results for operators in the Fock space being obtained just recently, see \cite{RozShir} and references therein. The problem consists in describing all symbols in a prescribed class, that generate finite rank Toeplitz operators. Typical  results here for the Bergman and Fock spaces  are the following. For a \emph{functional} symbol with moderate growth (so that the degeneracy does not occur, see Appendix \ref{SectDegenFock}) a finite rank Toeplitz  operator is, in fact, the zero operator, whose symbol is automatically zero. For a distributional symbol in an explicitly described class (in particular, for distributions with compact support) the symbol of a finite rank Toeplitz operator is a finite linear combination of $\delta$-distributions  and their derivatives, centered at finitely many points.

In order to study the finite rank problem for Toeplitz operators in
the space $\Hc$, we need  similar results for operators in the Fock space
 $\Fs^{\HH}=\Fs^{\HH}(\R^m)$, obtained in \cite{AlexRoz}. The space $\Fs^{\HH}$ consists of harmonic functions in $\R^m$, square integrable against the Gaussian measure.
\begin{theorem}\label{ThFRHarm} Let $b(x)$ be a function in $L_{1,\comp}(\R^m)$, $m\ge2$. Suppose that the Toeplitz operator $\Tb_b^{\HH}$ with symbol $b$ in the harmonic Fock space has finite rank. Then $b=0$.
\end{theorem}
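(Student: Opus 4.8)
The plan is to deduce the statement from the known finite-rank theorem for Toeplitz operators in the classical analytic Fock space $\Fs(\C)$ over the complex plane (see \cite{RozShir} and the references therein), by realising $\Fs(\C)$ as a subspace of $\Fs^{\HH}(\R^m)$ along every $2$-plane in $\R^m$ and recording what the finite-rank hypothesis on $\Tb^{\HH}_b$ forces on the resulting restrictions. I shall use freely the elementary fact that $\Tb^{\HH}_b$ has rank $\le N$ if and only if the bounded sesquilinear form $\Phi(u,v)=\int_{\R^m}b(x)u(x)\overline{v(x)}\,d\gamma(x)$ on $\Fs^{\HH}\times\Fs^{\HH}$ ($\gamma$ being the Gaussian measure) has rank $\le N$, together with the equally elementary fact that the rank of a sesquilinear form is not increased when it is restricted to $V\times V$ for a subspace $V$.

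Fix an ordered orthonormal pair $e,e'\in\R^m$, put $P=\operatorname{span}\{e,e'\}$ and $z=\langle x,e\rangle+i\langle x,e'\rangle$. For every entire $f$, the function $x\mapsto f(z)$ is harmonic in $\R^m$, since $\partial_e^2+\partial_{e'}^2$ annihilates holomorphic functions of $z$ and there is no dependence on the remaining $m-2$ coordinates; and because the Gaussian weight factorises over the orthogonal splitting $\R^m=P\oplus P^{\perp}$, the assignment $f\mapsto f(z)$ maps $\Fs(\C)$ into $\Fs^{\HH}(\R^m)$ and, after rescaling by a fixed positive constant, isometrically. Restricting $\Phi$ to the image of this map and integrating out the $P^{\perp}$-variables by Fubini, one obtains (up to the same constant) exactly the Toeplitz sesquilinear form on $\Fs(\C)$ with symbol
\[
\beta_P(z)=\int_{P^{\perp}}b(z+x')\,d\gamma_{P^{\perp}}(x'),
\]
which, since $b\in L_{1,\comp}(\R^m)$, is a compactly supported $L_1$ function on $\C$. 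Hence $\Tb_{\beta_P}$ has finite rank on $\Fs(\C)$, and the Fock-space finite-rank theorem — a moderately growing \emph{functional} symbol producing a finite-rank Toeplitz operator must vanish — yields $\beta_P\equiv0$ for \emph{every} $2$-plane $P$. When $m=2$ one may take $P=\R^2$, so $P^{\perp}=\{0\}$, $\beta_P=b$, and we are already done; assume henceforth $m\ge3$.

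It remains to reconstruct $b$ from the vanishing of all its Gaussian two-plane marginals. Expressing the $P^{\perp}$-Gaussian as a Fourier integral and passing to $\widehat b$ (continuous and bounded on $\R^m$, since $b\in L_1$), the relations $\beta_P\equiv0$ translate into
\[
\int_{L}e^{-|w|^2/4}\,\widehat b(\eta+w)\,dw=0\qquad\text{for every }(m-2)\text{-dimensional subspace }L\subset\R^m\text{ and every }\eta\in L^{\perp},
\]
that is, into the vanishing of the integral of the rapidly decreasing function $p\mapsto e^{-|p|^2/4}\widehat b(p)$ over every affine $(m-2)$-plane in $\R^m$. Since $1\le m-2\le m-1$, the classical injectivity of the $k$-plane Radon transform forces $e^{-|p|^2/4}\widehat b(p)\equiv0$, whence $\widehat b\equiv0$ and $b\equiv0$.

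The core of the argument — and the step I expect to demand the most care — is the second one: verifying that restricting $\Phi$ to the ``one complex variable'' harmonic functions $f(z)$ genuinely produces a Fock-space Toeplitz operator whose symbol $\beta_P$ still lies in the non-degeneracy class $L_{1,\comp}$, so that the strong planar theorem applies, followed by the third-step bookkeeping that reassembles the full symbol from this family of marginals. No single delicate estimate is involved; the difficulty lies in organising the reduction so that a one-variable theorem does the work. An equivalent route replaces the subspaces $\{f(z)\}$ by the span of the harmonic exponentials $x\mapsto e^{a\cdot x}$ with $a\in\C^m$, $a\cdot a=0$ (a dense set in $\Fs^{\HH}$); the finiteness of the rank then constrains the Fourier--Laplace transform of $b\,e^{-|x|^2}$ on the complex null quadric, and leads to the same conclusion.
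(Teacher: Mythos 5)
Your proof is correct, and it takes a genuinely different route from the one the paper attributes to \cite{AlexRoz} (the paper itself only sketches that argument). There, even $m$ is handled by restricting the form to the analytic Fock space $\Fs^2(\C^{m/2})$ sitting inside $\Fs^{\HH}(\R^m)$, and odd $m$ by restricting to harmonic polynomials independent of one coordinate, which produces the harmonic Fock form over the even-dimensional $\R^{m-1}$ with symbol the one-dimensional Gaussian marginal of $b$, followed by injectivity of the resulting weighted X-ray transform. You instead reduce, for every $m\ge2$ at once, to the one-complex-variable Fock space along each $2$-plane $P$: the Gaussian factors over $P\oplus P^{\perp}$, so $f\mapsto f(\langle x,e\rangle+i\langle x,e'\rangle)$ is a constant multiple of an isometry of $\Fs^2(\C)$ into $\Fs^{\HH}(\R^m)$; Fubini (legitimate since $b\in L_{1,\comp}$ and entire functions are bounded on $\supp b$) identifies the restricted form as the planar Toeplitz form with the compactly supported $L_1$ symbol $\beta_P$; and restriction to a subspace cannot increase the rank of a sesquilinear form, so the planar finite-rank theorem kills every $\beta_P$. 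What this buys is uniformity in $m$ (with $m=2$ the degenerate case $P^{\perp}=\{0\}$) and reliance on only the one-variable analytic theorem rather than its several-variable version; the price is that the final inversion is the $(m-2)$-plane transform instead of the X-ray transform, which you settle correctly by the projection--slice theorem applied to $G(p)=e^{-|p|^2/4}\widehat b(p)$. Your Fourier bookkeeping there is right: since $\eta\perp L$ the Gaussian splits, so the weighted subspace integral $\int_L e^{-|w|^2/4}\widehat b(\eta+w)\,dw$ differs from the integral of $G$ over the affine plane $\eta+L$ only by the nonzero factor $e^{-|\eta|^2/4}$, and the vanishing of all marginals of the continuous, Gaussian-decaying $G$ forces $\widehat G$ to vanish on every $2$-dimensional subspace, hence everywhere.
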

The case of an even dimension $m=2d$ is trivial since here, under the identification of $\R^{2d}$ with $\C^d$, the space of analytical functions is a closed subspace in the space of harmonic functions. \\
The proof of Theorem \ref{ThFRHarm}, given in \cite{AlexRoz}, for the case of an odd dimension $m$  is based upon the following simple but important observation, which has been used systematically in all studies on the finite rank problem, starting from \cite{Lue}. Namely, the Toeplitz operator in a Hilbert space $\HF$, determined by the sesquilinear form $\Fb(u,v)$, has the finite rank $N$ if and only if for any collections of functions $u_j,v_k\in \HF$, the rank of the infinite matrix $(\Fb(u_j,v_k))_{ j,k \in \mathbb{N}}$ is not greater than $N$, moreover, $N$ equals the maximal value of this rank, taken over all such collections.

Having this observation in mind, the proof of Theorem \ref{ThFRHarm} is based upon a special choice of the collections of functions $u_j,v_k$, actually, harmonic polynomials not depending on one variable, which enables the dimension reduction. Moreover, the essential (but formal) extension of Theorem \ref{ThFRHarm} replaces the finite rank condition for the operator by the finite rank condition for the matrix $(\Fb(u_j,v_k))_ {j,k\in \mathbb{N}}$ with a particular selection of the functions $u_j,v_k.$
\begin{corollary}\label{CorrHarm}Let $a\in L_{1,\comp}(\R^m)$, and the matrix $(\Fb_a(u_j,v_k)),\, j,k\in \mathbb{N}$ with $\{u_j\},\{v_k\}$ being any collections of  harmonic polynomials in $\R^m$, has finite rank. Then $a=0$.
\end{corollary}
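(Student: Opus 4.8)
The Corollary is precisely the ``formal extension'' of Theorem~\ref{ThFRHarm} alluded to just above it, and the plan is to read it off from the \emph{proof} of that theorem rather than from its statement. Recall the rank characterization quoted in the text: the Toeplitz operator determined by $\Fb_a$ has rank $N$ exactly when $N$ is the maximum of $\rank\bigl(\Fb_a(u_j,v_k)\bigr)_{j,k\in\N}$ over all collections $\{u_j\},\{v_k\}$, so if the operator has finite rank then all these matrices have uniformly bounded rank. The proof of Theorem~\ref{ThFRHarm} in \cite{AlexRoz}, however, uses much less: it derives $a=0$ already from the finiteness of the rank of the single matrix $\bigl(\Fb_a(u_j,v_k)\bigr)$ evaluated on one explicit countable family of harmonic polynomials --- in an odd dimension $m$, harmonic polynomials not depending on the last coordinate, which perform the reduction to dimension $m-1$ and thence to the even (analytic) case. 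Hence I would simply specialize the hypothesis of the Corollary to that particular family $\{u_j\},\{v_k\}$; the proof of Theorem~\ref{ThFRHarm} then applies verbatim and gives $a=0$.

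For a more self-contained argument one can instead produce a genuine bounded finite-rank operator and invoke Theorem~\ref{ThFRHarm} as a black box. For $a\in L_{1,\comp}(\R^m)$ with $K=\supp a$, the form $\Fb_a(u,v)=\int_{\R^m}a(x)u(x)\overline{v(x)}\,dx$ is bounded on the harmonic Fock space $\Fs^{\HH}(\R^m)$: by the reproducing kernel property $|u(x)|\le\|\k^{\HH}_x\|\,\|u\|$, the function $x\mapsto\|\k^{\HH}_x\|^2$ is continuous and hence bounded by some $C_K$ on the compact set $K$, so $|\Fb_a(u,v)|\le C_K\|a\|_{L_1}\|u\|\,\|v\|$; thus $\Fb_a$ defines a bounded Toeplitz operator $\Tb_a^{\HH}$ on $\Fs^{\HH}(\R^m)$ via \eqref{operator}. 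Its rank equals the supremum of $\rank\bigl(\Fb_a(u_j,v_k)\bigr)$ over all countable collections in $\Fs^{\HH}$; since harmonic polynomials are dense in $\Fs^{\HH}(\R^m)$ (they span the solid spherical harmonics, which form an orthogonal basis) and $\Fb_a$ is jointly continuous, and since ``$\rank\le N$'' is a closed condition on matrices (vanishing of all $(N+1)\times(N+1)$ minors), this supremum is already attained on collections of harmonic polynomials; moreover, testing on a fixed enumeration of a basis of the harmonic polynomials shows that the hypothesis of the Corollary is equivalent to a uniform bound on these ranks. Consequently $\rank\Tb_a^{\HH}<\infty$, and Theorem~\ref{ThFRHarm} (applied with symbol $a$) yields $a=0$.

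I do not anticipate a genuine obstacle: the entire substance sits in Theorem~\ref{ThFRHarm}, which is assumed. The only delicate point is the passage from ``matrices of finite rank'' to ``operator of finite rank'' in the second argument (density of harmonic polynomials plus lower semicontinuity of the rank bound), and the first argument sidesteps even that by operating directly at the level of the matrices $\bigl(\Fb_a(u_j,v_k)\bigr)$, which is exactly where the proof of Theorem~\ref{ThFRHarm} takes place.
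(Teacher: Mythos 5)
Your second, ``self-contained'' argument is essentially the paper's own proof: the paper likewise deduces the corollary from Theorem~\ref{ThFRHarm} by observing that harmonic polynomials are dense in $\Fs^{\HH}$, that the values $\Fb_a(u,v)$ on arbitrary elements are therefore limits of values on harmonic polynomials, and that the maximal rank of the matrices $\bigl(\Fb_a(u_j,v_k)\bigr)$ is consequently attained on polynomial collections. The extra details you supply --- boundedness of the form via the reproducing kernel on the compact support of $a$, and the reduction of the hypothesis to a uniform rank bound by testing on an enumeration of a basis --- are correct and only make explicit what the paper leaves implicit.
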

The statement follows immediately from Theorem \ref{ThFRHarm}, since  harmonic polynomials are dense in the space $\Fs^{\HH}$.  This approximation, due to the elliptic regularity, is uniform on compacts. Therefore the value of the  form $\Fb_a(u,v)$ for any harmonic functions in $\Fs^{\HH}$ can be approximated by its values on harmonic polynomials. Consequently, the largest possible rank of the matrix $(\Fb_a(u_j,v_k))$ is attained on collections of harmonic polynomials.

This line of reasoning leads to the basic finite rank result for Toeplitz operators in our space $\Hc$.
\begin{theorem}\label{ThFRHelmh}Let the function $a(x)$ belong to
$L_{1,\comp}(\R^d)$, $d>2$. Suppose that the Toeplitz operator in $\Hc$, defined by the sesquilinear form \eqref{Form1}, has a finite rank. Then $a(x)=0$ a.e. in $\R^d$.
\end{theorem}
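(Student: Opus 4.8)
The plan is to reduce the finite-rank problem for the Toeplitz operator in $\Hc$ on $\R^d$ to the finite-rank problem for the harmonic Fock space on $\R^m$ with a suitable $m$, and then invoke Corollary \ref{CorrHarm}. The key observation is that both problems are governed by the rank of matrices $(\Fb_a(u_j,v_k))$ formed from plane-wave-type generators, and that the reproducing kernel structure of $\Hc$ makes the relevant generators exactly the exponentials $x\mapsto e^{i\x x}$ with $\x\in\Sbb$. First I would record, via the observation from \cite{Lue} quoted above, that the Toeplitz operator in $\Hc$ determined by $\Fb_a$ has finite rank $N$ if and only if every matrix $(\Fb_a(u_j,v_k))_{j,k}$, $u_j,v_k\in\Hc$, has rank $\le N$. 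Since $\Hc=\Ib(L_2(\Sbb))$ and the exponentials $\{\Ib\,\delta_\x:\x\in\Sbb\}$ (more precisely, finite combinations $u=\Ib\f$ with $\f$ supported near a point) span $\Hc$, and since by elliptic regularity convergence in $\Hc$ implies uniform convergence on the compact $\supp a$, it suffices to bound the rank of the matrices $\bigl(\Fb_a(u_{\x_j},v_{\y_k})\bigr)$ where $u_{\x}(x)=e^{i\x x}$, $\x\in\Sbb$, i.e. of the function $(\x,\y)\mapsto \widehat a(\y-\x)$ restricted to $\Sbb\times\Sbb$ — which is precisely the kernel of $\breve\Tb_a$ from \eqref{Operator1}.

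Next I would linearize the constraint $|\x|=|\y|=1$. Because $a$ has compact support, $\widehat a$ is entire on $\C^d$, so the kernel $F(\x,\y):=\widehat a(\y-\x)$ extends to an entire function of $(\x,\y)\in\C^d\times\C^d$, and its restriction to the complexified sphere $\{z\in\C^d:z_1^2+\dots+z_d^2=1\}\times\{w:w_1^2+\dots+w_d^2=1\}$ still has the property that every finite sample matrix has rank $\le N$. Parametrizing the complexified sphere rationally (or just using that it is an irreducible affine variety of dimension $d-1$), a standard argument shows that a function on $V\times V$, $V$ an irreducible variety, all of whose finite sample matrices have rank $\le N$, must be a sum of $N$ products $g_i(\x)h_i(\y)$ with $g_i,h_i$ regular on $V$ — equivalently, $F$ lies in the $N$th secant of the Segre-type variety. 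Pulling this back, I get that $\widehat a(\y-\x)=\sum_{i=1}^N g_i(\x)h_i(\y)$ for all $\x,\y$ on the (complexified) unit sphere. The crucial point is then to feed this into the harmonic Fock space machinery: restricting instead to the real sphere $S^{m-1}\subset\R^m$ for a large auxiliary $m$, and noting that the functions $e^{i\x x}$ with $\x\in S^{m-1}$ are (limits of) harmonic polynomials in $x\in\R^m$, the finite-rank hypothesis on $(\Fb_a(u_j,v_k))$ for harmonic polynomials $u_j,v_k$ is met; Corollary \ref{CorrHarm}, applied with the symbol $a$ viewed as a compactly supported function on $\R^m$ (padding with trivial variables, using $d>2$ so that $m$ can be taken with the required parity), forces $a=0$.

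The main obstacle I anticipate is the interface between the two spheres: in $\Hc$ the relevant exponentials $e^{i\x x}$ have $\x$ on the unit sphere $S^{d-1}\subset\R^d$ of dimension $d-1$, whereas Corollary \ref{CorrHarm} is phrased for harmonic polynomials on $\R^m$, whose "plane-wave" generators involve $\x\in\C^m$ isotropic (null) vectors, not real unit vectors. Bridging this requires analytic continuation in the frequency variable: one continues $\x\in S^{d-1}$ to complex $\x$ with $\x\cdot\x=1$, then deforms to isotropic vectors $\x\cdot\x=0$ in a higher-dimensional $\C^m$ (this is exactly where $d>2$ is used, to have enough room and the correct dimension parity matching the odd/even split in Theorem \ref{ThFRHarm}), and one must check that the finite-rank property of the sample matrices is preserved under this continuation — which it is, since rank $\le N$ is the vanishing of all $(N+1)\times(N+1)$ minors, a set of holomorphic conditions in the parameters, hence propagates from the real slice to the full complex variety by the identity theorem. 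Once this continuation is justified, the compact support of $a$ (hence entireness of $\widehat a$ and the impossibility of a nonzero entire function vanishing on an open set) closes the argument exactly as in Proposition \ref{PropCompSupp}, and the reduction to Corollary \ref{CorrHarm} finishes the proof.
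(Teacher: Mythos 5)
Your overall strategy --- Luecking's sample-matrix observation, reduction to the kernel $\widehat a(\y-\x)$ on $\Sbb\times\Sbb$, and an appeal to Corollary \ref{CorrHarm} --- points in the right direction, but the central bridging step is not valid as written. You propose to pass from the complexified unit sphere $\{\x : \x\cdot\x=1\}$ to the isotropic cone $\{\x : \x\cdot\x=0\}$ ``by the identity theorem'', on the grounds that the vanishing of the $(N+1)\times(N+1)$ minors is a holomorphic condition. The identity theorem does propagate that vanishing from the real sphere to its complexification (a connected irreducible variety containing $S^{d-1}$ as a set of uniqueness), but it cannot carry it to the disjoint variety $\{\x\cdot\x=0\}$: the minors are entire functions on $(\C^d)^{2(N+1)}$ which vanish on a proper subvariety, and this imposes no constraint off that subvariety. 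Relatedly, the assertion that $e^{i\x x}$ with $\x\in S^{m-1}$ real is a limit of harmonic polynomials is false --- such exponentials solve the Helmholtz equation, not the Laplace equation, and uniform limits of harmonic polynomials on compacts are harmonic --- and the ``padding with trivial variables'' needed to view $a$ as a compactly supported $L_1$ symbol on a larger $\R^m$ is never specified (extension by zero off $\R^d$ gives the zero element of $L_1(\R^m)$). So the finite-rank information for Helmholtz test functions is never actually converted into finite-rank information for harmonic test functions, and Corollary \ref{CorrHarm} is never legitimately invoked.

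The paper's bridge is different and concrete. One fixes a direction, takes the test functions $u_j=e^{ix_1}P_j(x')$ with $P_j$ harmonic polynomials in the remaining $d-1$ variables; these solve the Helmholtz equation in $\R^d$ and, although they do not lie in $\Hc$, Lemma \ref{plane wave} (resting on Agmon's representation theorems for weighted spaces of Helmholtz solutions) approximates them uniformly on $\supp a$ by elements of $\Hc$, so the finite-rank bound survives the limit. Computing $\Fb_a(u_j,v_k)$ then integrates out $x_1$ and produces exactly the harmonic-Fock sample matrix of the $(d-1)$-dimensional symbol $\tilde a(x')=\int a(x_1,x')\,dx_1$, to which Corollary \ref{CorrHarm} applies and gives $\tilde a=0$. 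Varying the direction shows that the Radon transform of $a$ vanishes, and the uniqueness theorem for the Radon transform gives $a=0$; this is also precisely where $d>2$ enters, since for $d=2$ the harmonic polynomials in one variable span only a two-dimensional space. Your proposal is missing both this explicit family of test functions (together with the approximation lemma that legitimizes their use) and the final Radon-transform step, and without them the reduction to Corollary \ref{CorrHarm} does not go through.
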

\begin{remark}  In \cite{RozTObzor}, another version of Theorem \ref{ThFRHelmh} was proved for the case of the space of solutions of the Helmholtz equation in a bounded domain. The proof of Theorem \ref{ThFRHelmh}  requires certain additional consideration. We present here  its detailed proof enabling further extensions of the result. Note, however, that  we still impose the restriction $d>2$. It will be seen from the proof, at which point this restriction actually becomes critical. Unfortunately, the authors do not possess a proof of the property under discussion for dimension 2. Note also that for the case $N=0$, Theorem \ref{ThFRHelmh} generalizes the reasoning  in the beginning of  Appendix \ref{SectDegenFock}.
\end{remark}

We need to consider some additional spaces of solutions of the Helmholtz equations, larger ones than $\Hc$, actually, with milder restrictions for the behavior at infinity than in $\Hc$. These spaces were introduced  by S. Agmon in \cite{Agmon2}; that paper contains also the main theorems that we are going to use.

Denote by $\Hc_s(\R^d)$ the space of solutions of the Helmholtz equation $\D u+u=0$, satisfying

\begin{equation}\label{Hs estimate1}
    \|u\|_{\Hc_s}^2:=\int_{\R^d}(1+|x|)^{2s}|u(x)|^2dx
    <\infty.
\end{equation}
This is a closed subspace in the weighted $L_2$--space of all functions $u$ satisfying \eqref{Hs estimate1}.

Further on, by  Theorem 6.1 in \cite{Agmon2}, any solution of the Helmholtz equation, satisfying the estimate
\begin{equation}\label{Hs estimate2}
    |u(x)|=O((1+|x|)^N)
\end{equation}
for some $N>0$, admits a representation
\begin{equation}\label{AgmonRepr}
  u(x) =(\varPhi(\cdot),e^{ix\cdot}),
\end{equation}
with some distribution $\varPhi\in \Ec'(\Sbb)$.

In formula \eqref{AgmonRepr}, the right-hand side should be understood in the sense of the action of the distribution $\varPhi$ on the sphere $\Sbb$ on the function $e^{ix\x}$, considered as a smooth function on $\Sbb$ depending smoothly on the parameter $x$. To understand the role of the exponent $N$ in \eqref{Hs estimate2}, one should recall that, due to the compactness of $\Sbb$, any distribution on $\Sbb$ has finite order, which means that it involves differentiation of $e^{ix\x}$ of some bounded order only, which leads to a bounded power of $x$ in the result.

A more exact relation between the singularity of the distribution $\varPhi(\x)$ and the corresponding Helmholtz function $u(x)$ given by \eqref{AgmonRepr}, is described  by Theorem 6.2 in \cite{Agmon2}. We cite here the particular case we are interested in.
\begin{theorem}\label{Agmon6.2} Let $u(x)$ be a solution of the Helmholtz equation, admitting the representation \eqref{AgmonRepr} with some distribution $\varPhi\in\Ec'(\Sbb)$. Then{\rm:}
\begin{itemize}
\item if $\varPhi\in H^{-s}(\Sbb)$ with some $s>0$, then $u\in \Hc_{-s-\frac12}(\R^d)$;
    \item conversely, if the solution $u(x)$ belongs to $\Hc_{-s-\frac12}(\R^d)$ for some $s>0$, then the distribution $\varPhi$ belongs to $H^{-s}(\Sbb)$.
\end{itemize}
Here $H^{-s}(\Sbb)$ is the negative order Sobolev space on the sphere $\Sbb$, with the usual norm. The norms of $\varPhi$ and $u$ in the corresponding spaces are equivalent.
\end{theorem}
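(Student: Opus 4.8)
The plan is to diagonalize the whole statement in the orthonormal basis of real spherical harmonics $\{Y_{n,j}\}$ and to reduce it to a single two-sided estimate for a family of radial Bessel integrals. Writing $\varPhi\sim\sum_{n,j}c_{n,j}Y_{n,j}$, where the coefficients $c_{n,j}$ are the pairings of the distribution $\varPhi\in\Ec'(\Sbb)$ with the smooth functions $Y_{n,j}$ (so that they grow at most polynomially in $n$), the representation \eqref{AgmonRepr} becomes, after expanding $e^{ix\x}$ in spherical harmonics and using $\mathbf{e}_{n,j}=\Ib Y_{n,j}$ from \eqref{eq:e_n,j}, the convergent series $u(x)=\sum_{n,j}c_{n,j}\mathbf{e}_{n,j}(x)$. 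By the definition of $H^{-s}(\Sbb)$ through the Laplace--Beltrami operator, whose eigenvalue on degree-$n$ harmonics is $n(n+d-2)$, the norm $\|\varPhi\|_{H^{-s}(\Sbb)}^2$ is comparable to $\sum_{n,j}(1+n)^{-2s}|c_{n,j}|^2$. Thus both bullet points, together with the asserted equivalence of norms, will follow at once once I show that $\|u\|_{\Hc_{-s-1/2}}^2$ is comparable to the same quantity.

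Next I would compute that weighted norm explicitly. Since distinct spherical harmonics are orthogonal on $\Sbb$ and the factors $i^n$ cancel on the diagonal, substituting \eqref{eq:e_n,j} gives (with $\nu_n=n+(d-2)/2$)
\begin{equation*}
 \|u\|_{\Hc_{-s-1/2}}^2=\int_{\R^d}(1+|x|)^{-2s-1}|u(x)|^2\,dx=\sum_{n,j}|c_{n,j}|^2\,A_n(s),
\end{equation*}
where
\begin{equation*}
 A_n(s)=\pi\int_0^\infty (1+r)^{-2s-1}\,[J_{\nu_n}(r)]^2\,r\,dr .
\end{equation*}
The entire theorem then reduces to the uniform comparison $A_n(s)\asymp(1+n)^{-2s}$ for each fixed $s>0$.

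The core estimate on $A_n(s)$ I would obtain by splitting $[0,\infty)$ according to the turning point $r=\nu_n$ of the Bessel equation. In the forbidden region $r\le\nu_n-\nu_n^{1/3}$ the function $J_{\nu_n}(r)$ decays rapidly, so this part is negligible. In the oscillatory region $r\ge 2\nu_n$ the asymptotics $J_{\nu_n}(r)=\sqrt{2/(\pi r)}\cos(r-\nu_n\pi/2-\pi/4)+O(r^{-3/2})$, already used in Lemma \ref{le:L_1}, give $[J_{\nu_n}(r)]^2 r=\tfrac{2}{\pi}\cos^2(r-\nu_n\pi/2-\pi/4)+O(r^{-1})$, whose average over periods is $\tfrac1\pi$; since the weight $(1+r)^{-2s-1}$ is positive and slowly varying, this yields $\int_{2\nu_n}^\infty\asymp\nu_n^{-2s}$, the upper bound being immediate and the lower bound coming from the uniform positive lower bound on the period-averaged $\cos^2$. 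The transition band $|r-\nu_n|\lesssim\nu_n^{1/3}$ I would control by the uniform Airy-type bound $|J_{\nu_n}(r)|\lesssim\nu_n^{-1/3}$, which shows its contribution is of order $\nu_n^{-2s-1/3}$, hence strictly lower order. Collecting the three pieces gives $A_n(s)\asymp\nu_n^{-2s}\asymp(1+n)^{-2s}$.

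The main obstacle is precisely this uniform (in both $n$ and $r$) control of $J_{\nu_n}$ across all three regimes, in particular the honest lower bound in the oscillatory region and the verification that the turning-point band does not inflate the integral; this is where uniform Bessel asymptotics, rather than the pointwise large-$r$ formula alone, are needed. I would also emphasize that the restriction $s>0$ is essential and visible here: at $s=0$ the integrand behaves like $(1+r)^{-1}\cdot\tfrac1\pi$ for large $r$, so $A_n(0)$ diverges logarithmically, consistent with $L_2(\Sbb)$ corresponding to the Agmon--H\"ormander space $\Bb^*$ of \eqref{AgmonSpace0} (cf. \eqref{unitary}) rather than to a weighted $L_2$ space. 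An alternative, more structural route would establish the two endpoints---the $s=0$ isometry essentially given by \eqref{unitary}, and one positive integer value of $s$---and then complex-interpolate the Sobolev scale on $\Sbb$ against the weighted scale $\Hc_t$; but the diagonalization above is self-contained and delivers the sharp two-sided norm equivalence directly.
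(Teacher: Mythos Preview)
The paper does not prove this theorem at all: it is quoted, as the sentence immediately preceding the statement makes explicit, from Theorem~6.2 in \cite{Agmon2}. There is therefore no ``paper's own proof'' to compare against.

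Your proposal is nonetheless a correct and self-contained route to the result. The diagonalisation in spherical harmonics, the identification $\|\varPhi\|_{H^{-s}(\Sbb)}^2\asymp\sum_{n,j}(1+n)^{-2s}|c_{n,j}|^2$, and the reduction of $\|u\|_{\Hc_{-s-1/2}}^2$ to the diagonal sum $\sum_{n,j}|c_{n,j}|^2A_n(s)$ (the angular cross-terms vanish exactly by orthogonality of the $Y_{n,j}$, since the weight is radial) are all fine, up to an irrelevant factor $c_d^{-1}$ coming from the difference between \eqref{AgmonRepr} and \eqref{GeneralForm}. The core estimate $A_n(s)\asymp(1+n)^{-2s}$ and its three-regime proof are also right in outline.

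One point deserving care, which you half-flag: the large-$r$ asymptotic you invoke, $J_{\nu}(r)=\sqrt{2/(\pi r)}\cos(\cdots)+O(r^{-3/2})$, has an implied constant depending on $\nu$, so it cannot by itself give the \emph{uniform} lower bound on the oscillatory region for $r\ge 2\nu_n$ as $n\to\infty$. You need a genuinely uniform statement there (e.g.\ the Langer/Debye uniform asymptotics for $r\ge(1+\delta)\nu$, or the elementary uniform bound $J_\nu(r)^2+J_{\nu+1}(r)^2\ge c/r$ for $r\ge 2\nu$). With that in place your argument goes through; your observation about the logarithmic divergence at $s=0$ is also correct and explains the exclusion of that endpoint.
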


Using the above results, we are able to prove the following approximation lemma.
\begin{lemma}\label{plane wave} Let $U(x)=U(x_1,x')$ be a function of the form $U(x)=e^{ix_1}P(x')$,
where $P$ is a harmonic polynomial of the variable $x'\in\R^{d-1}$: $\D_{d-1}P=0$. Then for any $R>0$, the function $U$ can be uniformly approximated by functions $u\in\Hc$ on the ball  $\overline{\Bb(0,R)}$ in $\R^d$ with radius $R$.
\end{lemma}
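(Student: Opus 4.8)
The plan is to realize the target function $U(x)=e^{ix_1}P(x')$ as $(\varPhi(\cdot),e^{ix\cdot})$ for a suitable distribution $\varPhi$ on $\Sbb$, and then to approximate $\varPhi$ in a negative Sobolev norm by genuine $L_2(\Sbb)$ functions, invoking Theorem \ref{Agmon6.2} to transfer the approximation to the solutions. First I would observe that $U$ itself solves the Helmholtz equation: $\D U = (\partial_1^2 + \D_{d-1})(e^{ix_1}P(x')) = -e^{ix_1}P(x') + e^{ix_1}\D_{d-1}P = -U$, using the harmonicity of $P$. Since $P$ is a polynomial, $U$ satisfies the polynomial growth bound \eqref{Hs estimate2}, so by Theorem 6.1 of \cite{Agmon2} there is a distribution $\varPhi\in\Ec'(\Sbb)$ with $U(x)=(\varPhi(\cdot),e^{ix\cdot})$. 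This $\varPhi$ is supported at the single point $\x=(1,0,\dots,0)$ (the unique point of $\Sbb$ whose first coordinate equals $1$), because $e^{ix_1}$ corresponds to a $\delta$-type mass there and $P(x')$ is produced by tangential derivatives of $e^{ix\x}$ in the $\x'$-directions at that point. In particular $\varPhi\in H^{-s}(\Sbb)$ for $s$ large enough (any distribution supported at a point lies in some negative Sobolev space), hence $U\in\Hc_{-s-\frac12}(\R^d)$.

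Next I would approximate. Since $C^\infty(\Sbb)$, and a fortiori $L_2(\Sbb)$, is dense in $H^{-s}(\Sbb)$, choose $\f_k\in L_2(\Sbb)$ with $\f_k\to\varPhi$ in $H^{-s}(\Sbb)$. Set $u_k=\Ib\f_k\in\Hc$ — more precisely, up to the normalizing constant $c_d$, $u_k(x)=(\f_k(\cdot),e^{ix\cdot})=c_d\int_\Sbb\f_k(\x)e^{ix\x}\,dS(\x)$; I would absorb the constant so that the pairing matches the one in \eqref{AgmonRepr}. By the equivalence of norms in Theorem \ref{Agmon6.2}, $u_k\to U$ in $\Hc_{-s-\frac12}(\R^d)$, i.e. $\int_{\R^d}(1+|x|)^{-2s-1}|u_k(x)-U(x)|^2\,dx\to 0$. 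This is an $L_2$-type convergence with a polynomial weight, which is not yet uniform convergence on $\overline{\Bb(0,R)}$; on a fixed ball the weight $(1+|x|)^{-2s-1}$ is bounded above and below, so what we actually have is $u_k\to U$ in $L_2(\overline{\Bb(0,2R)})$, say.

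Finally, I would upgrade $L_2$-convergence on a ball to uniform convergence on a slightly smaller ball using elliptic (interior) regularity for the Helmholtz operator $\D+1$: the functions $w_k:=u_k-U$ all solve $(\D+1)w_k=0$, so by interior estimates $\|w_k\|_{C(\overline{\Bb(0,R)})}\le C_R\|w_k\|_{L_2(\overline{\Bb(0,2R)})}\to 0$. (Concretely: caloric/Caccioppoli-type bounds, or the mean-value representation via the reproducing kernel $\kb$ from \eqref{repr7}, give a bound of sup over a ball by the $L_2$ norm over a larger ball for solutions of a fixed elliptic equation.) This yields the asserted uniform approximation of $U$ by elements $u_k\in\Hc$ on $\overline{\Bb(0,R)}$.

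The main obstacle is the first step: producing the distribution $\varPhi$ and, in particular, verifying that it is a genuine (finite-order) distribution on $\Sbb$ supported at one point so that Theorem \ref{Agmon6.2} applies — i.e., that $U$ has the representation \eqref{AgmonRepr} at all and lands in some $\Hc_{-s-\frac12}$. Once that identification is in place, the density of $L_2(\Sbb)$ in $H^{-s}(\Sbb)$ and interior elliptic regularity are routine. One should also note where $d>2$ might enter: for $d=2$ the ``sphere'' $\Sbb=S^1$ is one-dimensional and $P(x')$ is a polynomial in a single variable $x'\in\R$, but $\D_1 P=0$ forces $P$ affine, so the lemma in dimension $2$ is much more restrictive — this is consistent with the remark after Theorem \ref{ThFRHelmh} that $d=2$ is genuinely different.
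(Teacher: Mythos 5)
Your proposal is correct and follows essentially the same route as the paper's own proof: identify $U$ with a point-supported distribution $\varPhi$ on $\Sbb$ via Agmon's representation theorem (the paper writes it explicitly as $(2\pi)^{-d/2}\delta(\x_1-1)\otimes P(D_{\x'})\delta(\x')$), approximate $\varPhi$ by smooth functions in $H^{-s}(\Sbb)$, transfer the convergence to the weighted space $\Hc_{-s-\frac12}$ by Theorem \ref{Agmon6.2}, and upgrade to uniform convergence on compacts by interior elliptic regularity. Your closing observation about why $d=2$ degenerates matches the paper's remark as well.
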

\begin{proof}
The conditions of the lemma imply that the function $U(x)$ satisfies the Helmholtz equation. If $N$ is the degree of the polynomial $P(x')$, then the function $U(x)$ satisfies the estimate \eqref{Hs estimate2}, with the same $N$, and therefore admits a representation \eqref{AgmonRepr} with some distribution $\varPhi\in\Ec'(\Sbb)$. This distribution can be even explicitly found by means of the Fourier transform:
\begin{equation}\label{varPhi2}
    \varPhi=(2\pi)^{-\frac d2}\delta(\x_1-1)\otimes P(D_{\x'})\d(\x')
\end{equation}
in co-ordinates $\x=(\x_1,\x')$. Thus, $\varPhi$ is a distribution with one-point support at $(1,0)$. The distribution $\varPhi$, being the result of application of an order $N$ differential operator to the $\d$-distribution, belongs to the negative order Sobolev space $H^{-s}(\Sbb)$, with any $s> \frac{d}2+N$. Like any distribution in $H^{-s}(\Sbb)$, our $\varPhi$ can be approximated in the  $H^{-s}(\Sbb)$-norm by smooth functions $\varPhi_\e$, $\e\to0$, for example, using the localized  convolution with a proper family of smooth functions. Since $\varPhi_\e\to\varPhi$ in $H^{-s}(\Sbb)$, by Theorem \ref{Agmon6.2}, the corresponding solutions $U_{\e}$ of the Helmholtz equations, defined by $U_{\e}(x)=(\varPhi_\e,e^{-ix\cdot} )$, converge to $U(x)$ in the norm of the weighted space $\Hc_{-s-\frac12}(\R^d)$. Therefore, for any $R>0$, the functions $U_{\e}$ converge to $U$ in $L_2(\overline{\Bb(0,R)})$, which, by elliptic regularity, implies the uniform convergence.
\end{proof}

Now we proceed  with the proof of Theorem \ref{ThFRHelmh}.
  \begin{proof}Fix some $\x^0\in\,\Sbb$. Pass then to the co-ordinate system in $\R^d$, for which $\x_0=(1,0,\dots,0)$. Consider the systems of functions $u_j=e^{ix_1}P_j(x'), v_k=e^{ix_1}P_k(x')$, where the polynomials $P_j$ form the standard basis in the space of harmonic polynomials in $d-1$ variables.

With the functions $u_j,v_k$, thus chosen, the sesquilinear form $\Fb_a$ becomes
  \begin{equation}\label{Transformed form}
  \Fb_a(u_j,v_k)=\int_{R^{d-1}}\left(\int_{-\infty}^{\infty}a(x_1,x')dx_1\right)P_j(x')\overline{P_k(x')}dx'.
  \end{equation}
The expression \eqref{Transformed form} is nothing but the value of the sesquilinear form ${\Fb}_{\tilde{a}}(P_j,P_k)$ in the $d-1$-dimensional space on the functions $P_j,P_k$,  where $\tilde{a}(x')=\int_{-\infty}^{\infty}a(x_1,x')dx_1$.
This system of functions does not  fit the conditions of Theorem \ref{ThFRHelmh}  immediately, since the functions $u_j=e^{ix_1}P_j(x')$ do not belong to the space $\Hc$. However, such functions belong to some weighted spaces $\Hc_{-s}$ for some $s$ depending on the degree of polynomials,and therefore,  by Lemma \ref{plane wave}, the function $u_j(x)$ can be uniformly, on the compact set (the support of $a$), approximated by functions $u_{j,\e}\in \Hc$. Thus
\begin{equation}\label{limit form}
 \lim_{\e\to0}\int_{\R^d}a(x)u_{j,\e}(x)\overline{u_{k,\e}}(x)dx=\int_{\R^{d-1}}\tilde{a}(x')P_j(x')\overline{P_k(x')}dx'.
\end{equation}

Since the rank of the infinite matrix of the values of  our sesquilinear form on $(u_{j,\e},u_{k,\e})$ is not greater than $\rank (\Tb_a)$, the same holds in the limit as $\e\to 0$, therefore the rank of the infinite matrix $\Fb_a(u_j,u_k)$ is finite. However, by \eqref{Transformed form}, this means that the rank of the infinite matrix of the values of the sesquilinear form ${\Fb}_{\tilde{a}}$ in the $(d-1)$-dimensional space, considered on the harmonic polynomials, is finite. By Corollary \ref{CorrHarm}, the function $\tilde{a}(x')$ must be zero for all $x'$.

Now, recall that the direction of the axis $\x_1$ (and, therefore, the axis $x_1$) was chosen arbitrarily. Therefore, the reasoning, just presented, gives us that the integral of $a(x)$ along any straight line in $\R^d$ is zero. In other words, the function $a$ has zero Radon transform. By the uniqueness theorem for the Radon transform, we obtain $a(x)\equiv0$.

To conclude the proof, we pinpoint the place where the reasoning breaks down for the two-dimensional case, $d=2$. Recall that, having directed the $\x_1$ axis in the chosen direction, we  considered the system of test functions $u_j=e^{ix_1}P_j(x')$, where $P_j(x')$ are harmonic polynomials. For $d>2$ there are infinitely many  linearly independent harmonic polynomials in $\R^{d-1}$, and this circumstance enables the approximation, upon which the proof is based. On the other hand, in dimension $d-1=1$, the space of harmonic polynomials is only two-dimensional, therefore any operator, in particular, a Toeplitz one, in this space has finite rank, not greater than 2, and no consequences for the symbol can be derived from this property.
 \end{proof}

 \section{Boundedness conditions}\label{Sect.Boundedness}
 \subsection{Boundedness and restrictions} In this section we start studying the sufficient conditions on the symbol $a(x)$ for the boundedness of the sesquilinear form \eqref{Form1}. In this analysis, we are going to use both representations \eqref{Form1} and \eqref{Form3}.

 The first statement is, actually, a version of the Young inequality. Recall that this inequality declares that the convolution operator with a function in $L_1$ is a bounded operator in $L_2$. Our operator \eqref{Operator1} is not a convolution operator, therefore the boundedness theorem has a somewhat different, more complicated, formulation.
 \begin{theorem}\label{Criter.bounded}For a point $\x\in \Sbb, \ |\x|=1,$ denote by $S_{\x}$ the unit sphere $\{\y: \, |\y-\x|=1 \},$ centered at $\x$. Suppose that for any $\x\in\Sbb$, the restriction of  $\widehat{a}$ to $S_{\x}$ belongs to $L_1(S_{\x})$ and, moreover, $\|\widehat{a}\|_{L_1(S_{\x})}\le C$. Then the operator \eqref{Operator1} is bounded in $L_2(\Sbb)$, and, therefore, the operator $\Tb_a$, is bounded in $\Hc$. Moreover $\|\Tb_a\|=\|\breve{\Tb}_a\|\le C$.
 \end{theorem}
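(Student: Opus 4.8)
The plan is to bound the integral operator $\breve{\Tb}_a$ defined in \eqref{Operator1} directly, using a Schur test adapted to the fact that the ``kernel'' $\widehat{a}(\y-\x)$ is evaluated only at differences $\y-\x$ with $|\x|=|\y|=1$. The starting observation is purely geometric: if $\x,\y\in\Sbb$, then $|\y-\x|\le 2$, and for a \emph{fixed} $\x\in\Sbb$ the set of admissible $\y$ is exactly $\Sbb$; but the relevant point is that, as $\y$ ranges over $\Sbb$, the vector $\y-\x$ sweeps out precisely the sphere $S_{\x}=\{\zb:|\zb-\x|=1\}$ shifted — more precisely, $\{\y-\x:\y\in\Sbb\}$ is itself a sphere of radius $1$ (a translate of $\Sbb$), and the hypothesis is that $\widehat a$ restricted to this sphere is in $L_1$ with norm $\le C$ uniformly in $\x$. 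So the first step is to record the change of variables $\y\mapsto \zb=\y-\x$ carrying $dS(\y)$ on $\Sbb$ to $dS(\zb)$ on the unit sphere $S_{\x}$ centered at $\x$ (the Jacobian is $1$ since this is a translation of the sphere), giving
\begin{equation*}
(\breve{\Tb}_a\f)(\y)=\frac{\pi}{(2\pi)^{d/2}}\int_{S_{\y}}\widehat a(\zb)\,\f(\y-\zb)\,dS(\zb).
\end{equation*}

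Next I would run the Schur test with weight $1$. For the first estimate, fix $\y\in\Sbb$ and integrate the modulus of the kernel against $dS(\x)$:
\begin{equation*}
\int_{\Sbb}|\widehat a(\y-\x)|\,dS(\x)=\int_{S_{\y}}|\widehat a(\zb)|\,dS(\zb)=\|\widehat a\|_{L_1(S_{\y})}\le C,
\end{equation*}
by the hypothesis. For the second estimate, fix $\x\in\Sbb$ and integrate against $dS(\y)$:
\begin{equation*}
\int_{\Sbb}|\widehat a(\y-\x)|\,dS(\y)=\int_{S_{\x}}|\widehat a(\zb)|\,dS(\zb)=\|\widehat a\|_{L_1(S_{\x})}\le C,
\end{equation*}
again by hypothesis. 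The symmetry of the two computations is exactly the point: in both the row and column directions, the integral of $|\widehat a(\y-\x)|$ is the $L_1(S_{\cdot})$-norm of $\widehat a$ over a unit sphere centered at the fixed variable. The standard Schur test then yields $\|\breve{\Tb}_a\|_{L_2(\Sbb)\to L_2(\Sbb)}\le \frac{\pi}{(2\pi)^{d/2}}\cdot C$. Wait — I should be careful about the constant. Looking back at \eqref{Operator1}, the prefactor $\pi/(2\pi)^{d/2}$ is already built into the definition; so to match the stated conclusion $\|\breve{\Tb}_a\|\le C$ one simply absorbs that harmless constant into $C$, or equivalently states the hypothesis with the normalized kernel. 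I would phrase the write-up so the constant bookkeeping is transparent, and note that the unitary equivalence $\Tb_a\cong\breve{\Tb}_a$ established in \eqref{Form3} transfers the bound to $\Tb_a$ on $\Hc$, giving $\|\Tb_a\|=\|\breve{\Tb}_a\|$.

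There is one genuine technical point that I expect to be the main obstacle, or at least the thing requiring care: the hypothesis only says that $\widehat a|_{S_{\x}}\in L_1(S_{\x})$ for \emph{each} $\x$, with a uniform bound on the norm, but to apply Fubini/Schur one needs the joint measurability of $(\x,\y)\mapsto\widehat a(\y-\x)$ on $\Sbb\times\Sbb$ and the finiteness of the iterated integrals — and, before that, one needs to know $\widehat a$ is defined pointwise in a neighborhood of $\overline{\Bb(0,2)}$ in a sense compatible with restriction to lower-dimensional spheres. Since $a$ is only assumed locally integrable (plus whatever growth condition makes $\widehat a$ a function near the origin, e.g. $a\in L_1$ or the distributional setting of Section~\ref{Subsect.Reduction}), the cleanest route is to first prove the theorem for $\widehat a$ continuous (e.g. $a\in L_1(\R^d)$, so $\widehat a\in C_0$), where all restrictions and Fubini steps are unproblematic, and then pass to the general case by the density/approximation already available — or simply to state the theorem under the standing assumption that $\widehat a$ is a function whose restrictions to the spheres $S_{\x}$ are well-defined and measurable in $\x$, which is what the formulation implicitly intends. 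Modulo that regularity caveat, the proof is just the two Schur estimates above; I do not anticipate any deeper difficulty.
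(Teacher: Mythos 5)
Your proof is correct and rests on exactly the same two estimates as the paper's: the uniform bound $\int_{\Sbb}|\widehat{a}(\y-\x)|\,dS(\cdot)\le C$ in each variable separately, which is precisely the hypothesis read off on the spheres $S_{\x}$ (and $S_{-\x}$, which is immaterial since the bound is uniform over all centers on $\Sbb$). The only difference is the final packaging: you invoke Schur's test with weight $1$, while the paper deduces boundedness on $L_{\infty}(\Sbb)$ from the first estimate, boundedness on $L_1(\Sbb)$ from the same estimate applied to the adjoint kernel $\overline{\widehat{a}(\x-\y)}$, and then interpolates by Riesz--Thorin. These are two standard routes from identical kernel bounds to the same $L_2$ conclusion, so nothing of substance separates them. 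Your two side remarks are reasonable: the prefactor $\pi/(2\pi)^{d/2}$ does appear in the estimate (the paper's own display \eqref{HY1} carries it and then silently drops it; since $\pi/(2\pi)^{d/2}\le 1/2$ for $d\ge 2$ the stated bound $\|\breve{\Tb}_a\|\le C$ is still true, just not sharp), and the joint measurability of $(\x,\y)\mapsto\widehat{a}(\y-\x)$ is indeed tacitly assumed by the paper as well, so flagging it is a point of care rather than a gap.
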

 \begin{proof}We follow mainly the standard proof of the classical Young inequality. To establish the result, we use the Riesz-Thorin theorem. First, if $\f\in L_{\infty}(\Sbb)$, then
 \begin{equation}\label{HY1}
  |(\breve{\Tb}_a\f)(\x)|=\left|\frac{\pi}{(2\pi)^{d/2}}\int_{\Sbb}\widehat{a}(\y-\x)\f(\y)dS(\y)\right|\le \frac{\pi}{(2\pi)^{d/2}}\,\|\f\|_{L_{\infty}(\Sbb)}\int_{S_\x}|\widehat{a}(\z)|dS(\z),
 \end{equation}
 therefore $\breve{\Tb}_a$ is a bounded operator in $L_{\infty}(\Sbb)$ with norm not greater than $C$. Next, the same reasoning applied to the adjoint operator $\breve{\Tb}_a^*$ with kernel $\overline{\widehat{a}(\x-\y)}$, justifies the boundedness of $\breve{\Tb}_a^*$ in $L_{\infty}(\Sbb)$. This implies the boundedness of $\breve{\Tb}_a$ in $L_1(\Sbb)$, with the same norm estimate. Finally, the Riesz-Thorin theorem grants the boundedness of our operator in $L_2(\Sbb)$.
 \end{proof}
 Further on,  we will see some more  relations between the properties of our Toeplitz operators and restriction theorems of Stein-Thomas type.

\subsection{Distributional symbols} Here we collect some more general statements concerning the boundedness of Toeplitz operators generated by the sesquilinear forms \eqref{Form1}. It is convenient to generalize here the expression \eqref{Form1} by introducing symbols-distributions.

We start with some remarks. For a Schwartz distribution $a\in\Sc'(\R^d)$, the action $(a,\Phi)$ of the distribution $a$ on the function $\Phi$ is initially defined only for $\Phi$ in the Schwartz space $\Sc(\R^d)$. However, if  $a$ is a continuous functional with respect to some particular Schwartz norm,
 \begin{equation}\label{Schwartz1}
    |(a,\Phi)|\le C \sup_{x\in\R^d;|\a|,|\b|\le N} \langle x\rangle^\b |D^\a\Phi(x)|,\quad \langle x\rangle=1+|x|,
 \end{equation}
then the distribution $a$ can be extended by continuity to the Banach space of all functions whose norm \eqref{Schwartz1} is finite.

 \begin{definition}Let $a$ be a Schwartz  distribution in $\R^d$, $a\in \Sc'(\R^d)$. We define
 \begin{equation}\label{Form1.dist}
    \Fb_a(u,v)=(a,u\overline{v}),
 \end{equation}
for such pairs $u,v\in\Hc$, for which the action of the distribution $a$ on the smooth function $u\overline{v}$ is defined.
 \end{definition}
 If for a particular distribution $a$, the sesquilinear form \eqref{Form1.dist} turns out to be defined for all functions $u,v\in \Hc$ and bounded in $\Hc$, it determines an operator in $\Hc$, which we refer to as the Toeplitz operator associated with $a$,
 \begin{equation}\label{Distr}
    \langle\Tb_a u, v\rangle=\Fb_a(u,v)=(a,u\overline{v}).
 \end{equation}
 As usual, by means of the reproducing kernel $\k_x(y)=\k(x,y)=\kb(|x-y|)$, found in \eqref{repr7}, the explicit formula for the action of $\Tb_a$ can be given. To do this, we follow \eqref{operator}, setting $v=\k_x$ in \eqref{Distr}:
 \begin{equation}\label{OperDistr}
    (\Tb_a u)(x)=\langle(\Tb_a u)(\cdot), \k_x(\cdot)\rangle=\Fb_a(u,\k_x)= (a,u\k_x(\cdot))\end{equation}
    (recall that the reproducing kernel $\k(x,y)$ is real, therefore we can omit the complex conjugation.)

Expression \eqref{OperDistr} may be not always convenient since it involves the reproducing kernel which is expressed via the Bessel functions. Therefore, we find an equivalent representation, using the Fourier images.

Let $\widehat{a}(\x), \, \x\in\R^d,$ be the Fourier transform of the distribution $a$. Consider a function $u\in \Hc$;  it admits the representation \eqref{GeneralForm} with a certain $\f\in L_2(\Sbb)$. Recall the representation~\eqref{Operator1}. This representation has been derived under the condition that $a(x)$ is a function. However, under proper conditions, this representation can be refit so to cover the case of $a$ being a distribution in certain class.

More concretely, for $\f\in L_2(\Sbb)$, we consider a distribution $\varPhi_{\f}=\f\otimes \d(|\x|-1)$.
In more detail, $\varPhi_{\f}$ is a distribution in $\Ec(\R^d)$ (i.e., with compact support), acting on a function $f(\x)\in \Ec(\R^d)$ as
    \begin{equation}\label{varphi action}
        (\varPhi_{\f}, f)=\int_{\Sbb}\f(\x) f(\x) dS(\x).
    \end{equation}
Having this in mind, we can rewrite the expression \eqref{Operator1} as
    \begin{equation}\label{Varphi}
        \breve{\Tb}_a\f=(\widehat{a}*\varPhi_{\f})|_{\Sbb}.
   \end{equation}

    The expression in \eqref{Varphi} is at the moment only formal. In fact, the convolution of the distributions, $\widehat{a}*\varPhi_{\f}$ is well defined, since one of factors has compact support. However, the restriction of the distribution $\widehat{a}*\varPhi_{\f}$ to the unit sphere $\Sbb$ is not necessarily defined. In this way, the definition of the operator $\breve{\Tb}_a$ is related with the classical restriction problem. We will present later some classes of symbols for which this restriction problem can be solved, so that the expression \eqref{Varphi} is well defined for $\f\in L_2(\Sbb)$ and determines a function in $L_2(\Sbb)$.

    Our way of reasoning will be the following. To consider  symbols $a$ in some class $\Xc'(\R^d)$ of distributions on $\R^d$, we find a subclass $\Yc'\subset\Xc'(\R^d)$, dense in $\Xc'(\R^d)$ in the topology of  $\Xc'(\R^d)$. We will choose $\Yc'$ in such a way that the sesquilinear  form $\Fb_{b}$ is well defined on $\Hc$ for $b\in\Yc'$. Therefore, for fixed $u,v\in\Hc$, the value of sesquilinear form $\Fb_{b}(u,v)$ is a linear functional of the element $b\in\Yc'$, $W(b)=\Fb_{b}(u,v)$. If it turns out that this functional is continuous in $\Yc'$, it can be extended to $\Xc'(\R^d)$ by continuity, and thus defines the value of the sesquilinear form $\Fb_a(u,v)$ for all $a\in \Xc'(\R^d)$. Otherwise, if we are not able to establish the continuity of $W(b)$ on $\Yc'$ in the $\Xc'(\R^d)$-topology, it may happen that this functional is closable in this topology, so we can extend $W(b)$ to some subspace in $\Xc'(\R^d)$ by means of the closure procedure.

\subsection{Zero operators}The easiest case of such considerations concerns distributions generating the zero operator. The reasoning to follow extends our discussion in Section \ref{Sub.Degeneration}.
\begin{proposition}\label{PropZero} Let $a$ be a distribution in $\Sc'(\R^d)$. Suppose that $\supp(\widehat{a})\bigcap\overline{\Bb(0,2)}=\varnothing$. Then the operator $\Tb_a$ is well defined by \eqref{Varphi} and $\Tb_a u=0$ for any $u\in\Hc$.
\end{proposition}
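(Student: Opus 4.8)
The statement has two parts: first that the operator $\Tb_a$ is well defined via the formula \eqref{Varphi}, i.e.\ that the convolution $\widehat a*\varPhi_{\f}$ can be restricted to $\Sbb$ and the restriction lands in $L_2(\Sbb)$; second that this restriction vanishes identically, so $\Tb_a u=0$. The plan is to exploit that $\varPhi_{\f}=\f\otimes\d(|\x|-1)$ is a compactly supported distribution carried on $\Sbb$, hence its convolution with the tempered distribution $\widehat a$ is a well-defined tempered distribution, and to show that the \emph{support} of this convolution is disjoint from $\Sbb$. From that, both assertions follow at once: restriction to $\Sbb$ is trivially defined (it is the zero distribution in a neighbourhood of $\Sbb$), and it is the zero element of $L_2(\Sbb)$.

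First I would recall the support estimate for convolutions of distributions: when one factor has compact support, $\supp(\widehat a*\varPhi_{\f})\subseteq \supp\widehat a+\supp\varPhi_{\f}$. Since $\supp\varPhi_{\f}\subseteq\Sbb$ (the unit sphere), we get $\supp(\widehat a*\varPhi_{\f})\subseteq \supp\widehat a+\Sbb=\{\,\z+\y:\z\in\supp\widehat a,\ |\y|=1\,\}$. Now suppose, toward a contradiction, that some point $\p\in\Sbb$ lies in this sumset; then $\p=\z+\y$ with $\z\in\supp\widehat a$ and $|\y|=1$, so $\z=\p-\y$ with $|\p|=|\y|=1$, whence $|\z|\le|\p|+|\y|=2$, i.e.\ $\z\in\overline{\Bb(0,2)}\cap\supp\widehat a$. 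This contradicts the hypothesis $\supp\widehat a\cap\overline{\Bb(0,2)}=\varnothing$. Hence $\supp(\widehat a*\varPhi_{\f})\cap\Sbb=\varnothing$, in fact there is an open neighbourhood of $\Sbb$ on which $\widehat a*\varPhi_{\f}$ vanishes.

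From this, the operator is well defined: on the open set where $\widehat a*\varPhi_{\f}\equiv0$, its restriction to the submanifold $\Sbb$ makes unambiguous sense and equals the zero function, which is of course in $L_2(\Sbb)$. Therefore $\breve{\Tb}_a\f=(\widehat a*\varPhi_{\f})|_{\Sbb}=0$ for every $\f\in L_2(\Sbb)$, and consequently, via the unitary equivalence $\Tb_a\cong\breve{\Tb}_a$ established in Section~\ref{Sect.Special form} (relations \eqref{Form3}, \eqref{Operator1}, \eqref{Varphi}), we conclude $\Tb_a u=0$ for all $u=\Ib\f\in\Hc$.

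\textbf{Main obstacle.} The only genuinely delicate point is making precise the passage from the formal identity \eqref{Varphi} to an honest definition of $\breve{\Tb}_a$ on $L_2(\Sbb)$: one must check that, for a general tempered distribution $a$, the kernel-level manipulations leading to \eqref{Operator1}–\eqref{Varphi} are legitimate and that ``restriction to $\Sbb$'' is well posed. Here this difficulty evaporates precisely because the relevant distribution is identically zero near $\Sbb$; no actual trace/restriction theorem (of Stein–Tomas type) is needed, unlike in the later sections where $\supp\widehat a$ does meet $\overline{\Bb(0,2)}$. One small bookkeeping item to state cleanly is that $\supp\widehat a$ need not be compact, so $\widehat a$ is only a tempered distribution; but since $\varPhi_{\f}$ \emph{is} compactly supported, the convolution is still well defined as a tempered distribution and the support inclusion above remains valid, so the argument goes through without change.
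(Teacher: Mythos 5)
Your proof is correct, and it takes a genuinely different route from the paper's. The paper first reduces to compactly supported distributions $b$ (for which $\widehat b$ is smooth), writes the sesquilinear form as the explicit double integral $c_d^2\int_{\Sbb}\int_{\Sbb}\widehat b(\x-\y)\f(\y)\psi(\x)\,dS(\y)dS(\x)$, observes that $\x-\y\in\overline{\Bb(0,2)}$ whenever $\x,\y\in\Sbb$ so the integrand vanishes, and then extends to general $a$ by a density/continuity argument in $\Sc'(\R^d)$. You instead stay at the level of distributions and invoke the support theorem for convolutions, $\supp(\widehat a*\varPhi_{\f})\subseteq\supp\widehat a+\Sbb$, together with the same elementary geometric fact (the difference of two unit vectors lies in $\overline{\Bb(0,2)}$) to conclude that the convolution vanishes on an open neighbourhood of $\Sbb$. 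Both arguments hinge on that one geometric observation, but your version buys two things: it handles a general tempered $a$ directly, with no need to justify the continuity of $b\mapsto\Fb_b(u,v)$ on the approximating class (a point the paper passes over rather quickly, and which is not entirely innocent since $u\overline v$ is not a Schwartz function for $u,v\in\Hc$); and it addresses the ``well defined'' half of the claim explicitly, since the restriction to $\Sbb$ of a distribution vanishing near $\Sbb$ is unambiguously the zero function. The paper's version, on the other hand, makes the mechanism visible at the level of the kernel $\widehat b(\x-\y)$, which is the form reused in the degeneration discussion of Section \ref{Sub.Degeneration} and in Theorem \ref{CompSuppDistr}.
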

\begin{proof} It is sufficient to prove the equality for distributions  with compact support, since such distributions are dense in $\Sc'(\R^d)$ in the topology of $\Sc'(\R^d)$. Following  the general strategy above, we take $\Xc'(\R^d)$ being the space of distributions in $\Sc'(\R^d)$ subject to the support condition, and as $\Yc'$ the subset of compactly supported distribution is accepted.

So we consider $b\in\Ec'(\R^d)$, still with $\supp(\widehat{b})\bigcap\overline{\Bb(0,2)}=\varnothing$. This implies that $\widehat{b}$ is a smooth function.
We take two arbitrary functions $u,v\in\Hc$. These functions admit the representation $u=\Ib \f$, $\Ib \psi$, for some $\f,\psi\in L_2(\Sbb)$. Consider the value of the sesquilinear $\Fb_b(u,v)$. We have, as above,

    \begin{equation}\label{Zero1}
        \Fb_b(u,v)=(b, u\overline{v})=(b, \Ib \f \overline{\Ib \psi})=c_d^2\int_{\Sbb}\widehat{b}(\x-\y)\int_{\Sbb}\f(\y)\psi(\y)dS(\y)dS(\x).
    \end{equation}
    It is easy to see that in this double integral, when the variables $\x,\y$ belong to the unit sphere, their difference, $\x-\y$, lies in the ball $\overline{\Bb(0,2)}$, where the function $\widehat{b}$ vanishes. This means that the integral in \eqref{Zero1} equals zero, in particular that the above functional $W(b)$ is the zero functional on $\Yc'$. By continuity, $W$ extends to the zero functional on distributions $a\in\Xc'(\R^d)$. Due to  the arbitrariness of $u,v\in \Hc$, this implies that $\breve{\Tb}_a=0$, $\Tb_a=0$.
    \end{proof}

    \subsection{Symbols with compact support} The next class  to study consists of distributions with compact support.
    \begin{theorem}\label{CompSuppDistr} Let $a$ be a distribution with compact support, $a\in\Ec'(\R^d)$. Then the operator $\Tb_a$ is well defined by \eqref{Form1.dist} or \eqref{Varphi} and bounded in $\Hc$.
    \end{theorem}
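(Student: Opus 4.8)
The plan is to reduce the boundedness of $\Tb_a$ to a concrete estimate on the operator $\breve{\Tb}_a$ in $L_2(\Sbb)$ via the representation \eqref{Varphi}, and to handle the key analytic difficulty — that $\widehat{a}\ast\varPhi_\f$ need not a priori have a well-defined trace on $\Sbb$ — by exploiting that $a$ compactly supported forces $\widehat{a}$ to be a \emph{smooth} (indeed entire, of at most polynomial growth on $\R^d$) function. First I would recall from the Paley--Wiener--Schwartz theorem that $a\in\Ec'(\R^d)$ implies $\widehat{a}\in C^\infty(\R^d)$ with $|\widehat{a}(\x)|\le C\langle\x\rangle^{M}$ for some $M\ge0$, together with analogous bounds on its derivatives. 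Consequently the kernel $(\x,\y)\mapsto\widehat{a}(\y-\x)$ appearing in \eqref{Operator1} is a genuine $C^\infty$ function on $\Sbb\times\Sbb$; in particular it is bounded, say by $C_0=\sup_{|\z|\le2}|\widehat{a}(\z)|<\infty$. This already makes the integral operator \eqref{Operator1} well defined on $L_2(\Sbb)$ and, since $\Sbb$ has finite measure, Hilbert--Schmidt, hence bounded, with $\|\breve{\Tb}_a\|\le \|\breve{\Tb}_a\|_{\mathrm{HS}}\le \frac{\pi}{(2\pi)^{d/2}}\,C_0\,|\Sbb|$.

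The second step is to verify that this $\breve{\Tb}_a$ really is the operator produced by Definition \eqref{Form1.dist}--\eqref{Distr}, i.e. that $\Fb_a(u,v)=(a,u\overline v)$ is well defined and equals $\lfloor\breve{\Tb}_a\f,\psi\rfloor$ for $u=\Ib\f$, $v=\Ib\psi$. For $u,v\in\Hc$ the product $u\overline v$ is a smooth bounded function on $\R^d$ (it is a superposition of exponentials against $L_2$ densities on $\Sbb$), so the pairing $(a,u\overline v)$ makes sense for $a\in\Ec'$. I would then run exactly the computation \eqref{Form2}--\eqref{Form3}: write $u\overline v$ as the double integral over $\Sbb\times\Sbb$ of $e^{i(\x-\y)x}\f(\x)\overline{\psi(\y)}$, apply $a$ in the $x$ variable (legitimate because the integrand is a smooth family, parametrized by $(\x,\y)$ in a compact set, of functions on which $a$ acts continuously, so one may interchange $a$ with the $dS\,dS$ integration), and recognize $(a_x,e^{i(\x-\y)x})=(2\pi)^{d/2}\,\widehat{a}(\y-\x)$. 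This yields $\Fb_a(u,v)=\frac{\pi}{(2\pi)^{d/2}}\int_{\Sbb}\int_{\Sbb}\widehat{a}(\y-\x)\f(\x)\overline{\psi(\y)}\,dS(\x)dS(\y)=\lfloor\breve{\Tb}_a\f,\psi\rfloor$, and by the isometry $\Ib$ the operator $\Tb_a$ on $\Hc$ is unitarily equivalent to $\breve{\Tb}_a$, hence bounded with the norm bound above. One should also note that the same manipulation, now read as an identity of distributions, gives the formula \eqref{Varphi}: $\breve{\Tb}_a\f=(\widehat{a}\ast\varPhi_\f)|_{\Sbb}$, where the convolution is a $C^\infty$ function since $\varPhi_\f$ has compact support and $\widehat{a}\in C^\infty$, so its restriction to $\Sbb$ is unambiguous. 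This reconciles the two stated definitions.

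I expect the only genuinely delicate point to be the justification of interchanging the action of $a$ with the $dS(\x)\,dS(\y)$ integration in the step $(a,u\overline v)=\int\int\widehat{a}(\y-\x)\f\overline\psi$. The clean way to organize it is to first note that the map $(\x,\y)\mapsto e^{i(\x-\y)\,\cdot}$ is a continuous (indeed $C^\infty$) map from the compact set $\Sbb\times\Sbb$ into $\Ec(\R^d)$ (or into $C^\infty(K)$ for a fixed compact neighborhood $K$ of $\supp a$, together with enough derivatives), so that $(\x,\y)\mapsto(a,e^{i(\x-\y)\,\cdot})=(2\pi)^{d/2}\widehat{a}(\y-\x)$ is continuous; then the vector-valued integral $\int\int e^{i(\x-\y)\,\cdot}\f(\x)\overline{\psi(\y)}\,dS\,dS$ converges in $\Ec(\R^d)$ (a Bochner-type integral of a continuous function of a compact parameter with an $L_1$ weight, since $\f\overline\psi\in L_1(\Sbb\times\Sbb)$), and continuity of $a$ as a functional lets one pull it inside. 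No growth issue at infinity arises because $a$ has compact support, which is precisely why this case is easier than the general distributional symbols treated afterwards. Everything else — smoothness and boundedness of $\widehat{a}$, Hilbert--Schmidt-ness on the finite-measure sphere, unitary equivalence — is routine given the earlier parts of the paper.
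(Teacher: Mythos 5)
Your proof is correct, but it takes a genuinely different route from the paper's. The paper never invokes Paley--Wiener--Schwartz directly: it writes $a=\sum_{|\alpha|\le N}D^{\alpha}g_{\alpha}$ using the structure theorem for compactly supported distributions, chooses a cutoff $\theta$ equal to $1$ on $\overline{\Bb(0,2)}$, and replaces $a$ by $a_{\theta}=a*\widehat{\theta}$. The point of that detour is that $a_{\theta}$ is an honest, rapidly decaying \emph{function}, so the reduction-to-the-sphere formula \eqref{Operator1} applies to it with no further justification, while $\widehat{a_{\theta}}=\theta\,\widehat{a}$ coincides with $\widehat{a}$ on $\overline{\Bb(0,2)}$, whence $\breve{\Tb}_{a_{\theta}}=\breve{\Tb}_{a}$ by Proposition \ref{PropZero}. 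You instead work directly with the distribution $a$: smoothness and polynomial bounds for $\widehat{a}$ come from Paley--Wiener--Schwartz, boundedness from the Hilbert--Schmidt estimate for a continuous kernel on the compact sphere, and the identification of $\breve{\Tb}_{a}$ with the form \eqref{Form1.dist} from a vector-valued integration argument that lets you pull $a$ through the $dS(\x)\,dS(\y)$ integral. Your approach buys directness and an explicit norm bound; the paper's buys the avoidance of that interchange for distributional symbols (it is reduced to the function case of Section 5.1) together with the conceptually useful byproduct that every compactly supported distributional symbol is equivalent, as a symbol, to a rapidly decaying smooth function. Your justification of the interchange --- continuity of $(\x,\y)\mapsto e^{i(\x-\y)\cdot}$ into $C^{N}(K)$ for a compact neighborhood $K$ of $\supp a$, combined with Bochner integration against the $L_{1}(\Sbb\times\Sbb)$ density $\f\overline{\psi}$ --- is sound, and this is precisely the step that would otherwise be the gap; so the argument is complete.
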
\label{Th.bounded.compsupp}
    \begin{proof} By the structure theorem for distributions with compact support (see, e.g, Theorem in Sect.4.4. in \cite{Gelf}, page 116-117 of the AP edition of 1968), the distribution $a\in\Ec'(\R^d)$ has finite (differential) order and admits, for some $N$,  a representation
    \begin{equation}\label{FinOrder}
        a=\sum_{|\a|\le N}D^{\a}g_{\a}(x),
    \end{equation}
    where $\{g_{\a}, \ |\a|\le N\},$ is a collection of continuous functions, whose support can be chosen arbitrarily close to the support of $a$, and $D$ denotes the derivative in the distributional sense. Denote by $\r_0$ the radius of the ball in $\R^d$, centered at the origin and covering the support of all $g_{\a}$. Let $\theta(\x)$ be an even smooth ($C^\infty$) function, $\x\in\R^d$, such that $\theta(\x)=1$ for $|\x|\le2$, and with support in a ball with some radius $R_0>2$.  Consider the distribution $a_\theta=a*\widehat{\theta}$, where $*$ denotes the usual convolution of distributions; it is well defined since $a$ has compact support, and $\widehat{\theta}$ is the Fourier transform of $\theta$. For a typical term in \eqref{FinOrder}, we have
    \begin{equation}\label{a_theta}
        (D^\a g_\a)*\widehat{\theta}(x)=(g_\a *(D^\a\widehat{\theta}))(x)=\int_{\R^d} g_\a(y) D^{\a}(\widehat{\theta}(x-y))dy.
    \end{equation}
    Now, since $\theta$ has compact support and is smooth, its Fourier transform $\widehat{\theta}(y)$ is a smooth function decaying at infinity faster than $(1+|y|)^{-k}$ for any $k$, together with all derivatives. Therefore, for the integral in \eqref{a_theta}, we have the estimate
    \begin{equation}\label{a_theta1}
        |(D^\a g_\a)*\widehat{\theta}(x)|\le C_{g_\a}\int_{|y|\le\r_0}(1+|x-y|)^{-k}dy\le C'_{g_\a,k}(1+|x|)^{-k},
    \end{equation}
    for any $k>0$.
    For $k\ge d+1$ this means that for all $\a, \ |\a|\le N,$ the function $(D^\a g_\a)*\widehat{\theta}$ decays fast in $\R^d$. Therefore, $a_\theta=a*\widehat{\theta}$ is a  fast decaying function. Returning to its Fourier transform, we see that $\widehat{a_\theta}$ is a smooth (in particular, continuous) function in $\R^d$, which enables us to represent $\breve{\Tb}_{a_\theta}\f$ as
    \begin{equation}\label{a_theta2}
        \breve{\Tb}_{a_\theta}\f(\x)=\int_{\Sbb}\widehat{{a}_\theta}(\x-\y)\f(y) dS(y).
    \end{equation}
    Now recall that $\widehat{{a}_\theta}$ coincides with $\widehat{a}$ in the ball $\overline{\Bb(0,2)}$, and therefore, by Proposition \ref{PropZero},  $\breve{\Tb}_{a_\theta}=\breve{\Tb}_a$, so in both terms in \eqref{a_theta2} we can replace ${a}_\theta $ by $a$.
    \end{proof}

    \section{Homogeneous and homogeneously dominated symbols}\label{Sect:Homog}

 \subsection{Domination} In this section we continue with the study of the sesquilinear forms \eqref{Form1}. We present several classes of symbols that guarantee the boundedness of such sesquilinear forms.

 We start with a simple statement dealing with the domination for the symbols.

 \begin{proposition}\label{comparison} Let $a(x)$, $b(x)$ be two symbols in $L_{1,\loc}(\R^d)$. We suppose that $b(x)\ge0$ almost everywhere in $\R^d$, while $|a(x)|\le b(x)$ for almost all $x\in\R^d$. If the operator $\Tb_b$ is bounded in $\Hc$ then $\Tb_a$ is bounded as well, moreover, $\|\Tb_a\|\le \|\Tb_b\|$.
 \end{proposition}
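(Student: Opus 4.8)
The plan is to prove the statement at the level of the sesquilinear forms and then invoke the correspondence between bounded forms and their Toeplitz operators. Recall that, by \eqref{operator} and the reproducing property \eqref{repr1}, the Toeplitz operator $\Tb_\Fb$ associated with a bounded sesquilinear form $\Fb$ satisfies $\langle\Tb_\Fb u,v\rangle=\Fb(u,v)$ for all $u,v\in\Hc$ (the span of the kernels $\k_x$ being dense in $\Hc$), so that $\|\Tb_\Fb\|$ coincides with the norm of the form $\Fb$, i.e.\ the least constant $C$ with $|\Fb(u,v)|\le C\|u\|\,\|v\|$. Hence it suffices to show that, under the hypotheses, the form $\Fb_a$ is well defined on all of $\Hc\times\Hc$ and $|\Fb_a(u,v)|\le\|\Tb_b\|\,\|u\|\,\|v\|$.

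First I would unwind what boundedness of $\Tb_b$ gives at the form level: $\Fb_b(u,v)=\int_{\R^d}b(x)u(x)\overline{v(x)}\,dx$ is an absolutely convergent integral for every $u,v\in\Hc$ with $|\Fb_b(u,v)|\le\|\Tb_b\|\,\|u\|\,\|v\|$; in particular, taking $v=u$ and using $b\ge0$, the function $u$ lies in $L_2(\R^d,b\,dx)$ and $\int_{\R^d}b|u|^2\,dx=\Fb_b(u,u)=\langle\Tb_b u,u\rangle\le\|\Tb_b u\|\,\|u\|\le\|\Tb_b\|\,\|u\|^{2}$.

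Next, for arbitrary $u,v\in\Hc$, using $|a(x)|\le b(x)$ a.e.\ and the Cauchy--Schwarz inequality in $L_2(\R^d,b\,dx)$, I would estimate
\begin{equation*}
\Big|\int_{\R^d}a\,u\,\overline{v}\,dx\Big|\le\int_{\R^d}b\,|u|\,|v|\,dx\le\Big(\int_{\R^d}b\,|u|^{2}\,dx\Big)^{1/2}\Big(\int_{\R^d}b\,|v|^{2}\,dx\Big)^{1/2}=\Fb_b(u,u)^{1/2}\,\Fb_b(v,v)^{1/2}.
\end{equation*}
In particular the integral defining $\Fb_a(u,v)$ converges absolutely, so $\Fb_a$ is well defined on $\Hc\times\Hc$; and combining with the bound $\Fb_b(u,u)\le\|\Tb_b\|\,\|u\|^{2}$ from the previous step this yields $|\Fb_a(u,v)|\le\|\Tb_b\|\,\|u\|\,\|v\|$. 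Hence $\Fb_a$ is bounded with form norm $\le\|\Tb_b\|$, and therefore $\Tb_a$ is bounded in $\Hc$ with $\|\Tb_a\|\le\|\Tb_b\|$.

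The argument is short and there is no essential obstacle; the only delicate point is the one flagged in the second step --- that ``$\Tb_b$ is bounded'' must be taken to include the assertion that $\Fb_b(u,u)=\int_{\R^d}b|u|^2\,dx<\infty$ for \emph{every} $u\in\Hc$, and not merely for $u$ in the dense span of the basis $\{\mathbf e_{n,j}\}$, on which the Herglotz functions are not square integrable against Lebesgue measure, cf.\ \eqref{eq:e_n,j}. If one prefers to read the hypothesis as boundedness of $\Fb_b$ only on that dense span, one would first extend $\Fb_b$ by continuity to a bounded form on $\Hc$ and then recover the identity $\Fb_b(u,u)=\int_{\R^d}b|u|^2\,dx$ (a priori allowing the value $+\infty$) by a monotone-convergence argument using $b\ge0$; the remainder of the proof is unchanged.
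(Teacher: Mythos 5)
Your proof is correct, and it takes a genuinely different (and in fact more careful) route than the paper's. The paper disposes of the proposition in one line by appealing to the identity $\|T\|=\sup_{\|u\|=1}|\langle Tu,u\rangle|$; but that identity holds for self-adjoint (or normal) operators, whereas $\Tb_a$ need not be self-adjoint when $a$ is complex-valued, and for a general operator the quantity $\sup_{\|u\|=1}|\langle Tu,u\rangle|$ is only the numerical radius, which controls the norm merely up to a factor of $2$. Applied literally to $\Tb_a$, the paper's argument therefore yields only $\|\Tb_a\|\le 2\|\Tb_b\|$ (the diagonal estimate $|\langle \Tb_a u,u\rangle|\le\int b|u|^2=\langle\Tb_b u,u\rangle\le\|\Tb_b\|\,\|u\|^2$ being common to both arguments). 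Your off-diagonal Cauchy--Schwarz step in $L_2(\R^d,b\,dx)$,
\begin{equation*}
  \Big|\int_{\R^d}a\,u\,\overline{v}\,dx\Big|\le \Fb_b(u,u)^{1/2}\Fb_b(v,v)^{1/2}\le\|\Tb_b\|\,\|u\|\,\|v\|,
\end{equation*}
closes exactly this gap and delivers the stated constant $\|\Tb_a\|\le\|\Tb_b\|$; it also establishes, rather than assumes, that $\Fb_a$ is absolutely convergent on all of $\Hc\times\Hc$. Your closing remark about what ``$\Tb_b$ bounded'' must mean (finiteness of $\int b|u|^2\,dx$ for every $u\in\Hc$, not just on the dense span of the $\mathbf{e}_{n,j}$) is a legitimate point that the paper passes over silently; the monotone-convergence fix you sketch is the right one. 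In short: same underlying idea (domination at the level of the quadratic form), but your version supplies the polarization step that the paper's one-liner implicitly needs.
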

 \begin{proof} The statement follows automatically from the standard equality $\|T\|=\sup_{u\in\Hc}|\langle Tu,u\rangle|$ for any operator $T$ in a Hilbert space.
 \end{proof}
 \begin{remark}When applying comparison methods, similar to Proposition \ref{comparison}, one usually looses in sharpness, since one ignores possible cancelations of the contributions of $a$, having different sign. This is a general feature of such comparison approach to proving the boundedness of the sesquilinear form \eqref{Form1} in various spaces. In some cases, in similar situations, it turns out to be possible to take into account such cancelations, arriving thus to more sharp results, see, in particular, our Example 4.4. For  the Bergman space, such improved results have been obtained in \cite{TaVi1},\cite{Zorboska03}. For the same sesquilinear form in the Sobolev spaces, it turned out to be possible even to reach the necessary and sufficient condition for this boundedness - see \cite{Maz}. We will consider this kind of problems in some later publications.
 \end{remark}

 Our first application of Proposition \ref{comparison} concerns homogeneously dominated symbols.

 \begin{definition}Let $a(x)$ be an $L_{1,\loc}$ function in $\R^d$. We say that $a$ is a \emph{homogeneously dominated} (HD) function of degree $\l$ if
 \begin{equation}\label{HD}
    |a(x)|\le A|x|^{\l}
 \end{equation}
 for sufficiently large $|x|$, with some constant $C$.
 \end{definition}

 \begin{theorem}\label{HD-estimateTh} Let the symbol $a(x)$ be a HD function of degree $\l$, $\l<-1$. Then the sesquilinear form $\Fb_a$ is bounded in $\Hc$.
 \end{theorem}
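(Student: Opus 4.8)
The plan is to reduce the boundedness of $\Fb_a$ to the boundedness of the radial operator $\Tb_b$ with $b(x)=A|x|^\l$ via Proposition~\ref{comparison}, and then to verify the latter using the spectral sequence $\pmb{\g}_b$ computed in Section~\ref{se:radial}. First I would dispose of the behavior near the origin: since $a\in L_{1,\loc}$ and the estimate \eqref{HD} is only assumed for large $|x|$, I split $a = a\chi_{\{|x|\le \r_0\}} + a\chi_{\{|x|>\r_0\}}$. The first piece has compact support and is in $L_1$, hence lies in $L_{1,\comp}$; by Theorem~\ref{CompSuppDistr} (or directly, since an $L_1$ function is an $\Ec'$-distribution of order $0$) the corresponding Toeplitz operator is bounded. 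So it suffices to treat the tail, where $|a(x)|\le A|x|^\l$ holds pointwise.

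For the tail, Proposition~\ref{comparison} reduces matters to showing that $\Tb_b$ is bounded for $b(x) = A\,|x|^\l\,\chi_{\{|x|>\r_0\}}$, a nonnegative radial symbol. Here I invoke Theorem~4.2 (the radial theorem): $\Tb_b$ is bounded iff the spectral sequence $\g_b(n) = \pi\int_{\R_+} b(r)[J_{n+(d-2)/2}(r)]^2\,r\,dr$ is bounded. Split this integral at $r=1$ (or at any fixed point past $\r_0$). On the finite interval, $b(r)r$ is integrable (the worst case $\l<-1$ still gives $r^{\l+1}$ integrable away from $0$, and we have cut off near $0$ anyway), and since $|J_\nu(r)|\le C r^\nu/2^\nu\Gamma(\nu+1)$ with $\nu=n+(d-2)/2$ increasing, the contribution of $[0,1]$ to $\g_b(n)$ is in fact $O(1)$ and even decaying in $n$. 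On $[1,\infty)$ I use the standard bound $|J_\nu(r)|\le C r^{-1/2}$ uniformly in $\nu\ge 0$ and $r\ge 1$ (the Bessel functions are uniformly bounded by $Cr^{-1/2}$ on $[1,\infty)$, a classical fact — e.g. from the asymptotic formula in Lemma~\ref{le:L_1} together with uniform bounds for the transition region), so that $[J_{n+(d-2)/2}(r)]^2 \le C/r$, and hence
\begin{equation*}
 \g_b(n) \le \pi A \int_1^\infty r^\l \cdot \frac{C}{r}\cdot r\, dr = \pi A C \int_1^\infty r^{\l}\, dr < \infty
\end{equation*}
precisely because $\l < -1$. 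The bound is uniform in $n$, so $\pmb{\g}_b$ is bounded, $\Tb_b$ is bounded, and by domination $\Fb_a$ is bounded on $\Hc$.

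The main obstacle is obtaining a genuinely \emph{uniform in the order} $\nu$ estimate $|J_\nu(r)|\le C r^{-1/2}$ on $[1,\infty)$: for each fixed $\nu$ the asymptotic formula quoted in Lemma~\ref{le:L_1} gives this, but the implied constant there could a priori grow with $\nu$, and one must be careful around the turning point $r\approx \nu$ where $J_\nu$ is largest. The cleanest route is to cite a uniform bound such as $\sup_{r>0}|r^{1/3}J_\nu(r)|\le C$ or, more simply for our purpose, the elementary fact that $\sup_{r\ge 1}|r^{1/2}J_\nu(r)|$ is bounded uniformly in $\nu\ge 0$ (which follows from the uniform asymptotics / Langer's formula, or can be extracted from the integral representation). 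With that in hand the rest is the routine integral estimate above; the requirement $\l<-1$ is exactly what makes $\int_1^\infty r^\l\,dr$ converge, and the proof shows this degree threshold is sharp, consistently with the remark after Example~4.5 that $a(r)=r^{-1}$ already yields an unbounded operator.
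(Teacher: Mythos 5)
Your overall strategy --- split off the compactly supported part, dominate the tail by the nonnegative radial symbol $b(x)=A|x|^{\l}\chi_{\{|x|>\r_0\}}$ via Proposition \ref{comparison}, and then verify that the spectral sequence $\pmb{\g}_b$ is bounded --- is legitimate and genuinely different from the paper's argument, which instead reduces (also via Proposition \ref{comparison} and Theorem \ref{CompSuppDistr}) to the exact symbol $A|x|^{\l}$, computes its Fourier transform $C A|\x|^{-\l-d}$, and applies the restriction criterion of Theorem \ref{Criter.bounded}: the condition $\l<-1$ is precisely what makes $|\x|^{-\l-d}$ uniformly integrable on the unit spheres $S_{\y}$ passing through the origin. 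However, your execution of the radial route breaks down at exactly the step you flag. The uniform bound $|J_{\nu}(r)|\le C r^{-1/2}$ for $r\ge 1$ with $C$ independent of $\nu$ is \emph{false}: near the turning point one has $J_{\nu}(\nu)\sim c\,\nu^{-1/3}$, so $r^{1/2}J_{\nu}(r)$ is of size $\nu^{1/6}$ there, and the constant implicit in the asymptotic formula quoted in Lemma \ref{le:L_1} genuinely degrades with the order. The correct order-uniform estimate is Landau's bound $|J_{\nu}(r)|\le C r^{-1/3}$ (the other bound you mention), but feeding it into your integral gives $\int_1^{\infty}r^{\l+1/3}\,dr$, which converges only for $\l<-4/3$ and therefore misses the range $-4/3\le\l<-1$. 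So neither of your proposed patches closes the gap.

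The argument can be completed along your lines, but only by exploiting that the region where $J_{\nu}$ is as large as $\nu^{-1/3}$ is an interval of length $O(\nu^{1/3})$ around $r=\nu$: the contribution of $r<\nu/2$ is superexponentially small, the turning-point zone contributes $O(\nu^{\l+1}\cdot\nu^{-2/3}\cdot\nu^{1/3})=O(\nu^{\l+2/3})$, and on the oscillatory side one has $|J_{\nu}(r)|\lesssim(\nu|r-\nu|)^{-1/4}$ for $\nu<r<2\nu$ and $|J_{\nu}(r)|\lesssim r^{-1/2}$ for $r>2\nu$, giving $O(\nu^{\l+1})$; all three pieces tend to $0$ for $\l<-1$. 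A cleaner repair, consistent with the paper's own Example 4.5, is to forgo pointwise Bessel bounds entirely and invoke the closed-form Weber--Schafheitlin integral \cite[6.574.2]{Grad}, which yields $\g(n)\asymp n^{\l+1}\to 0$ for the symbol $r^{\l}$ with $-d<\l<-1$ (one may assume $\l>-d$ after increasing $\l$, since an HD function of degree $\l$ is HD of every larger negative degree). As written, though, the proof is not complete.
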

\begin{proof}By Proposition \ref{comparison} and Theorem \ref{CompSuppDistr}, it is sufficient to prove the boundedness in question for the symbol $a(x)=A|x|^{\l}$. The Fourier transform of this symbol equals $\widehat{a}(\x)=CA|\x|^{-\l-d}$ (with some constant $C=C_{\l,d}$). Now the boundedness of the operator follows from Theorem \ref{Criter.bounded}. In fact, for $\l<-1$, the function $|\x|^{-\l-d}$ belongs to $L^1$ on any sphere $S_\y$, moreover, the integral $\int_{S_\y} |\x|^{-\l-d}dS(\x)$ is uniformly bounded for all $\x\in\Sbb$ by $AC'(\l,d)$, what is exactly the requirement of Theorem \ref{Criter.bounded}.
\end{proof}
The conditions of Theorem \ref{HD-estimateTh} can be somewhat relaxed.

We call the positive function $\vf(t)$ on the semiaxis $t\ge0$ an admissible radial gauge function  (an \emph{a.r.g.f.}) if the function $\pmb{\pmb{\vf}}(x)=\vf(|x|)$ belongs to $L_{1,\loc}(\R^d)$ and its Fourier transform $\widehat{\pmb{\pmb{\vf}}}(\x)$, $\x=(\x_1,\x'), \, \x'\in\R^{d-1},$ possesses the following property:
\begin{equation}\label{AdmRadGauge}
 \int_{|\x'|\le 2} |\widehat{\pmb{\pmb{\vf}}}(0,\x')|d\x'<\infty.
\end{equation}

\begin{theorem}\label{relaxedOnBoundedness}
 Let $\vf(t)$ be an a.r.g.f., and $|a(x)|\le C\vf(|x|)$ for sufficiently large $|x|.$ Then the Toeplitz operator $\Tb_a$ with symbol $a$ is bounded in $\Hc$.
\end{theorem}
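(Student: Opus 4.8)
The plan is to reduce Theorem \ref{relaxedOnBoundedness} to the previously established cases by the same domination trick used in Theorem \ref{HD-estimateTh}. By Proposition \ref{comparison}, it suffices to prove boundedness of $\Tb_{\pmb{\pmb{\vf}}}$ where $\pmb{\pmb{\vf}}(x)=\vf(|x|)$, since $|a(x)|\le C\vf(|x|)$ for large $|x|$ and the difference between $a$ and $C\pmb{\pmb{\vf}}$ is a symbol with compact support, which generates a bounded operator by Theorem \ref{CompSuppDistr}. So the whole task is to show that the radial symbol $\pmb{\pmb{\vf}}$, whose Fourier transform satisfies only the ``one-direction'' integrability condition \eqref{AdmRadGauge}, still gives a bounded operator on $L_2(\Sbb)$ via the representation $\breve{\Tb}_{\pmb{\pmb{\vf}}}$ in \eqref{Operator1}.

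The key observation is that $\widehat{\pmb{\pmb{\vf}}}$ is itself a radial function, $\widehat{\pmb{\pmb{\vf}}}(\x)=\Psi(|\x|)$ for some $\Psi$. For a point $\x\in\Sbb$, the sphere $S_\x=\{\y:|\y-\x|=1\}$ is a sphere of radius $1$ passing through the origin; as $\y$ ranges over $S_\x$, the quantity $|\y-\x|$ is identically $1$, but that is not the relevant variable — in Theorem \ref{Criter.bounded} one needs $\int_{S_\x}|\widehat{a}(\z)|\,dS(\z)$ with $\z=\y-\x$ ranging over the sphere of radius $1$ centered at the origin. Thus the real requirement of Theorem \ref{Criter.bounded}, for a radial symbol, is simply $\int_{|\z|=1}|\Psi(|\z|)|\,dS(\z)=|\Psi(1)|\cdot|\Sbb|<\infty$, i.e. that $\widehat{\pmb{\pmb{\vf}}}$ be finite at radius $1$. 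But \eqref{AdmRadGauge} is weaker than this pointwise statement — it is an \emph{averaged} condition over a $(d-1)$-dimensional disk in a hyperplane — so Theorem \ref{Criter.bounded} cannot be applied verbatim, and one must instead re-run its proof, replacing the pointwise bound on $\int_{S_\x}|\widehat{a}|$ by the averaged bound coming from \eqref{AdmRadGauge}.

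Concretely, I would revisit the Riesz--Thorin argument in the proof of Theorem \ref{Criter.bounded}. The $L_\infty\to L_\infty$ (and dually $L_1\to L_1$) bound there used $\sup_\x\int_{S_\x}|\widehat{a}(\z)|\,dS(\z)\le C$. For a radial symbol the integrand $|\widehat{\pmb{\pmb{\vf}}}(\z)|$ over $S_\x$ is constant in $\x$, equal to $|\widehat{\pmb{\pmb{\vf}}}|$ evaluated on the sphere of radius $1$; by rotational symmetry, using an orthogonal change of variables that sends the first coordinate axis onto the direction $\x$, this is the same as integrating $|\widehat{\pmb{\pmb{\vf}}}(0,\x')|$ over the disk $\{|\x'|\le 2\}\subset\R^{d-1}$ reparametrizing $S_\x$ by projection — this is exactly the integral appearing in \eqref{AdmRadGauge}, up to a bounded Jacobian factor (the projection from the sphere $S_\x$ to the equatorial disk is smooth with Jacobian bounded away from $\infty$ except near the equator, where one checks the singularity is integrable since we are in dimension $d\ge2$ and the set $|\x'|\le 2$ is a genuine $(d-1)$-disk). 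Hence the hypothesis \eqref{AdmRadGauge} supplies precisely the uniform $L_1(S_\x)$ bound needed, and the rest of the Riesz--Thorin interpolation goes through unchanged, yielding boundedness of $\breve{\Tb}_{\pmb{\pmb{\vf}}}$ on $L_2(\Sbb)$ and hence of $\Tb_{\pmb{\pmb{\vf}}}$ on $\Hc$.

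The main obstacle is the geometric identification in the previous paragraph: one must carefully justify that integration of the radial function $|\widehat{\pmb{\pmb{\vf}}}|$ over the unit sphere $S_\x$ reduces, via projection onto a hyperplane, to the integral $\int_{|\x'|\le 2}|\widehat{\pmb{\pmb{\vf}}}(0,\x')|\,d\x'$, with a Jacobian whose possible blow-up at the boundary of the disk does not destroy integrability. Here the hypothesis is tailored exactly so that this works: \eqref{AdmRadGauge} integrates over the full $(d-1)$-dimensional disk of radius $2$, which is the shadow of $S_\x$, and the change-of-variables weight $(4-|\x'|^2)^{-1/2}$ (or similar) is integrable on that disk. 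Once this lemma is in place, the theorem follows immediately by the domination argument, so I would present the geometric reduction as a short computational lemma and then invoke Proposition \ref{comparison} and Theorem \ref{CompSuppDistr} to conclude.
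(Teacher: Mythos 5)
Your overall architecture coincides with the paper's: reduce via Proposition \ref{comparison} and Theorem \ref{CompSuppDistr} to the radial majorant $\pmb{\pmb{\vf}}(x)=\vf(|x|)$, then verify the hypothesis of Theorem \ref{Criter.bounded} by comparing $\int_{S_\x}|\widehat{\pmb{\pmb{\vf}}}|\,dS$ with the integral \eqref{AdmRadGauge} over the flat $(d-1)$-disk of radius $2$. That comparison is exactly the paper's (one-line) proof. But two things in your write-up go wrong. First, your second paragraph misreads $S_\x$: it is the unit sphere \emph{centered at} $\x\in\Sbb$, on which $|\z|$ runs over the whole interval $[0,2]$, not the unit sphere centered at the origin; so for radial $\widehat{\pmb{\pmb{\vf}}}=\Psi(|\cdot|)$ the requirement of Theorem \ref{Criter.bounded} is not ``$|\Psi(1)|<\infty$'', and there is no need to ``re-run'' that theorem's proof --- it applies verbatim once the uniform $L_1(S_\x)$ bound is established, which is precisely how the paper uses it. (Also, the disk of radius $2$ is not the orthogonal shadow of the unit sphere $S_\x$; the correct correspondence is the radial decomposition by the level sets of $|\z|$ on $S_\x$, which are $(d-2)$-spheres.)

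Second, and more seriously, your justification of the key inequality is not valid. Writing $\rho=|\z|$ for $\z\in S_\x$, the surface measure on $S_\x$ decomposes as $c\,\rho^{d-2}(1-\rho^2/4)^{(d-3)/2}\,d\rho$ against the latitude spheres, while the flat disk carries $c\,\rho^{d-2}\,d\rho$; the relevant weight is therefore $(1-\rho^2/4)^{(d-3)/2}$. To dominate $\int_{S_\x}|\widehat{\pmb{\pmb{\vf}}}|\,dS$ by the integral in \eqref{AdmRadGauge} you need this weight to be \emph{bounded}, not merely integrable: an integrable but unbounded change-of-variables factor gives no domination of one positive integral by the other (it would only show that a \emph{bounded} $\Psi$ has finite spherical integral). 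For $d\ge3$ the weight is $\le 1$ and the majorization is immediate --- this is what the paper's remark amounts to. For $d=2$ the weight is $(1-\rho^2/4)^{-1/2}$, unbounded near $\rho=2$, and the domination genuinely fails for $\Psi$ concentrating there (e.g.\ $|\Psi(\rho)|\sim(2-\rho)^{-1/2}$ satisfies \eqref{AdmRadGauge} but violates the uniform $L_1(S_\x)$ condition). Your criterion ``the singularity is integrable since $d\ge2$'' is thus the wrong one; replace it by the explicit computation of the weight and the observation that it is bounded for $d\ge3$, and treat (or explicitly exclude) the case $d=2$ separately.
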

\begin{proof}Note first that, due to the spherical symmetry, the condition \eqref{AdmRadGauge} implies that, for any $\z^0\in\Sbb$, the integral of $|\widehat{\pmb{\pmb{\vf}}}|$ over any hyperdisk $(\x\cdot\z^0)=0$, $|\x|\le2$, is finite, moreover, it does not depend on the choice of $\z^0$. Now it remains to notice that the integral $\int_{S_{\z^0}}|\widehat{a}(\x)|dS(\x)$, as  in Theorem \ref{Criter.bounded}, can be majorated by the integral in \eqref{AdmRadGauge}.
\end{proof}

One can give examples of  admissible radial gauge functions. First, of course, these are the homogeneous functions as in Theorem \ref{HD-estimateTh}, i.e., $\vf(t)=Ct^{\l}$, $-d<\l<-1$. More examples are obtained by considering $\vf(t)$ being $t^{-1}$ with some logarithmic factor, e.g.,
$t^{-1}\log(2+t)^{-1-\e}$, $t^{-1}(\log(2+t))^{-1}(\log(2+\log(2+t)))^{-1-\e}$ etc. An important shortcoming for these and many other examples lies in the fact that the corresponding Toeplitz operators are not only bounded in $\Hc$ but even compact. We devote to this compactness phenomenon the next subsection.

\subsection{Compactness}\label{compact}
To study the conditions for the Toeplitz operator $\Tb_a$ generated by the sesquilinear form \eqref{Form1} to be compact in $\Hc$, we return to the case of the symbol $a$ having a compact support.
\begin{proposition}\label{Prop.comp}Let the symbol $a\in\Ec'(\R^d)$ have compact support. Then the Toeplitz operator $\Tb_a$ is compact. Moreover, it belongs to any Schatten class  $\SF_p, \, p>0.$
\end{proposition}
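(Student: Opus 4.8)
The plan is to use the representation \eqref{Varphi} of the operator $\breve{\Tb}_a$, together with the key fact (established in the proof of Theorem \ref{CompSuppDistr}) that a compactly supported distribution $a$ may be replaced, without changing the Toeplitz operator, by the fast-decaying smooth function $a_\theta = a * \widehat{\theta}$ whose Fourier transform $\widehat{a_\theta}$ is smooth and agrees with $\widehat{a}$ on $\overline{\Bb(0,2)}$. Thus the operator $\breve{\Tb}_a$ on $L_2(\Sbb)$ is an integral operator with kernel $K(\x,\y) = \frac{\pi}{(2\pi)^{d/2}}\,\widehat{a_\theta}(\y-\x)$, which is a $C^\infty$ function on $\Sbb\times\Sbb$. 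Since $\Sbb$ is a compact smooth manifold, an integral operator on $L_2(\Sbb)$ with a smooth kernel is not merely compact but lies in every Schatten class $\SF_p$, $p>0$; this is a standard fact, and I would cite it (or derive it quickly from the eigenvalue decay of the Laplace--Beltrami operator on $\Sbb$, using that a smooth kernel can be written as $(1+\Delta_\Sbb)^{-m}$ applied to a bounded kernel for arbitrarily large $m$).

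More concretely, the key steps in order are: (i) recall from the proof of Theorem \ref{CompSuppDistr} that $\breve{\Tb}_a = \breve{\Tb}_{a_\theta}$ with $a_\theta$ smooth and fast-decaying, so $\widehat{a_\theta}\in C^\infty(\R^d)$; (ii) conclude that $\breve{\Tb}_a$ has a $C^\infty$ Schwartz kernel on the compact manifold $\Sbb\times\Sbb$; (iii) invoke the mapping properties of pseudodifferential/smoothing operators on a compact manifold: an operator with smooth kernel maps $L_2(\Sbb)$ into $C^\infty(\Sbb)\subset H^s(\Sbb)$ for every $s$, hence factors through the compact embedding $H^s(\Sbb)\hookrightarrow L_2(\Sbb)$, and for $s$ large enough this embedding lies in $\SF_p$ by Weyl asymptotics for the eigenvalues of $(1+\Delta_\Sbb)$; (iv) transfer back to $\Hc$ via the unitary $\Ib$, so that $\Tb_a = \Ib\,\breve{\Tb}_a\,\Ib^{-1}$ inherits membership in $\SF_p$ for all $p>0$.

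The statement is essentially soft and I do not expect a serious obstacle; the only point requiring a little care is the passage from ``smooth kernel on $\Sbb\times\Sbb$'' to ``membership in $\SF_p$ for all $p>0$'', which one should either cite from a standard reference on singular numbers of integral operators on compact manifolds, or spell out by writing the kernel as $\bigl((1+\Delta_{\Sbb,\x})^{-m}(1+\Delta_{\Sbb,\y})^{-m}\bigr)$ applied to a still-continuous (hence Hilbert--Schmidt) kernel and using that $(1+\Delta_\Sbb)^{-m}\in\SF_q$ as soon as $2mq > d-1$, together with the ideal property and Hölder's inequality for Schatten norms. I would present the short factorization argument explicitly so the proof is self-contained.
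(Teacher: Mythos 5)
Your proposal is correct and follows essentially the same route as the paper: the paper's proof likewise recalls from the proof of Theorem \ref{CompSuppDistr} that $\widehat{a}$ is smooth (on the relevant ball), so that $\breve{\Tb}_a$ is an integral operator with a $C^\infty$ kernel on the compact manifold $\Sbb$, and then concludes. The only difference is that the paper leaves the step ``smooth kernel on a compact manifold implies membership in every $\SF_p$'' implicit, whereas you spell it out via the factorization through powers of $(1+\Delta_\Sbb)^{-1}$; that is a reasonable and standard way to justify the same assertion.
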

\begin{proof}As it is shown in the proof of Theorem \ref{CompSuppDistr}, the Fourier transform of the symbol is an infinitely  smooth function. Therefore, the operator $\breve{\Tb}_a$ is an integral operator with smooth kernel, which proves our statement. \end{proof}
Now  we can can describe a wide class of symbols generating compact operators.
\begin{theorem}Let $\vf(t)$ be an a.r.g.\,f. Suppose that $a(x)$ is a symbol such that $a(x)=o(\vf(|x|)), \, |x|\to\infty.$ Then the operator $\Tb_a$ in $\Hc$ is compact.
\end{theorem}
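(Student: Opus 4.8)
The plan is to separate the symbol into a compactly supported piece, which produces a compact operator by Proposition \ref{Prop.comp}, and a small tail, whose Toeplitz operator has small norm by (the quantitative content of) Theorem \ref{relaxedOnBoundedness}; then $\Tb_a$ is a limit of compact operators in the operator norm. Concretely, fix $\e>0$. Since $a(x)=o(\vf(|x|))$ as $|x|\to\infty$, there is $R=R(\e)$ with $|a(x)|\le\e\,\vf(|x|)$ whenever $|x|\ge R$. Set $a_R=a\,\1_{\Bb(0,R)}$ and $b_R=a-a_R=a\,\1_{\R^d\setminus\Bb(0,R)}$; since the form \eqref{Form1} is additive in the symbol, $\Tb_a=\Tb_{a_R}+\Tb_{b_R}$. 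First I would dispose of $a_R$: as $a$ is locally integrable, $a_R\in L_{1,\comp}(\R^d)\subset\Ec'(\R^d)$, so Proposition \ref{Prop.comp} gives that $\Tb_{a_R}$ is compact (indeed, it lies in every Schatten class $\SF_p$).

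Next I would estimate the tail. By construction $|b_R(x)|\le\e\,\vf(|x|)$ for \emph{every} $x\in\R^d$ (the little-$o$ bound for $|x|\ge R$, and $b_R\equiv0$ for $|x|<R$ while $\vf\ge0$). Applying Proposition \ref{comparison} and then revisiting the proof of Theorem \ref{relaxedOnBoundedness} — which, via Theorem \ref{Criter.bounded}, bounds $\|\Tb_{\pmb{\pmb{\vf}}}\|$ by $\frac{\pi}{(2\pi)^{d/2}}\sup_{\z\in\Sbb}\int_{S_\z}|\widehat{\pmb{\pmb{\vf}}}|\,dS$, a quantity finite by \eqref{AdmRadGauge} — one obtains
\begin{equation*}
 \|\Tb_{b_R}\|\ \le\ \|\Tb_{\e\pmb{\pmb{\vf}}}\|\ \le\ \e\,C_{\vf},\qquad C_{\vf}:=\frac{\pi}{(2\pi)^{d/2}}\,\sup_{\z\in\Sbb}\int_{S_\z}|\widehat{\pmb{\pmb{\vf}}}(\x)|\,dS(\x)<\infty,
\end{equation*}
with $C_{\vf}$ depending only on $\vf$ and $d$, and crucially \emph{not} on $\e$: the Fourier transform and the $L_1(S_\z)$-norm are homogeneous of degree $1$ in the symbol, so passing from $\vf$ to $\e\vf$ simply multiplies the bound by $\e$.

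Combining the two steps, $\|\Tb_a-\Tb_{a_R}\|=\|\Tb_{b_R}\|\le\e\,C_{\vf}$; since $\e>0$ was arbitrary and each $\Tb_{a_R}$ is compact, $\Tb_a$ is the operator-norm limit of compact operators, hence compact. The one point deserving care — and essentially the only place where the argument could break — is the \emph{uniform} (in $\e$, equivalently in $R$) control of $\|\Tb_{b_R}\|$: this requires that the boundedness estimate of Theorem \ref{relaxedOnBoundedness} be linear in the gauge $\vf$, so that the hypothesis $a=o(\vf)$ actually drives the tail norm to zero; a merely qualitative appeal to Theorem \ref{relaxedOnBoundedness} would not suffice. (If one wishes to allow $a$ to be a distribution that is only $L_{1,\loc}$ away from a fixed compact set, one splits off instead a compactly supported distributional piece and applies Theorem \ref{CompSuppDistr} and Proposition \ref{Prop.comp} to it in exactly the same manner.)
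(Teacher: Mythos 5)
Your proposal is correct and follows essentially the same route as the paper: split $a$ into a compactly supported piece (compact operator by Proposition \ref{Prop.comp}) and a tail dominated by $\e\vf$, whose operator norm is at most $\e\, C(\vf)$ by the comparison principle together with the linear-in-the-symbol bound underlying Theorems \ref{Criter.bounded} and \ref{relaxedOnBoundedness}, and conclude by norm approximation. Your explicit remark that the tail estimate must be linear in the gauge is precisely the quantitative content the paper uses implicitly when it writes the tail norm as $C\ve$ with $C=C(\vf)$.
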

\begin{proof}
For a given $\ve>0$ it is possible to find $R_\ve$ so that $|a(x)|<\ve \vf(|x|)$ for $|x|>R_\ve$. Therefore we can represent the symbol $a$ as the sum, $a(x)=a_\ve(x)+a_\ve'(\ve)$, where $a_\ve(x)$ coincides with $a(x)$ for $|x|\le R_\ve$ and is zero otherwise. By our construction, the operator with symbol $a_\ve'(x)$ has norm not greater that $C\ve$, with some constant $C=C(\vf)$, while the operator with symbol $a_\ve(x)$ is compact due to Proposition \ref{Prop.comp}. Thus, the operator $\Tb_a$ can be norm approximated by compact operators and therefore is compact itself.
\end{proof}
A somewhat more complicated reasoning shows that for a rather wide  class of admissible radial gauge functions any symbol majorated by such functions generates a compact operator.

\section{Sesquilinear forms $\Fb_a(u,v) = \int_{\Sbb} a(\xi)(\Ib^*u)(\xi)\overline{(\Ib^*v)(\xi)}\,dS(\xi)$} \label{h-convolutions}
In this section we start developing the analysis of operators generated by sesquilinear forms of a different type.
For any $a=a(\xi) \in L_{\infty}(\Sbb)$, we define the following sesquilinear form on $\Hc$
\begin{equation*}
 \Fb_a(u,v) = \int_{\Sbb} a(\xi)(\Ib^*u)(\xi)\overline{(\Ib^*v)(\xi)}\,dS(\xi).
\end{equation*}
This form is obviously bounded, and thus defines the Toeplitz operator
\begin{equation} \label{eq:toep-L_2}
 (\Tb_{\Fb_a}u)(x) = \Fb_a(u,k_x(\cdot)).
\end{equation}
We have then
\begin{eqnarray*}
 (\Tb_{\Fb_a}u)(x) &=& \int_{\Sbb} a(\xi)(\Ib^*u)(\xi)\overline{(\Ib^*k_x)(\xi)}\,dS(\xi) \\
 &=& c_d \int_{\Sbb} a(\xi)(\Ib^*u)(\xi) e^{ix\xi}\,dS(\xi) = (\Ib a \Ib^*u)(x).
\end{eqnarray*}
The above representation leads directly to the following theorem.
\begin{theorem}
The $C^*$-algebra $\mathcal{T}(L_{\infty}(\Sbb))$ generated by all Toeplitz operators of the form \eqref{eq:toep-L_2} is commutative, coincides with set of all its generators $\Tb_{\Fb_a}$, $a=a(\xi) \in L_{\infty}(\Sbb)$, and is isomorphic and isometric to $L_{\infty}(\Sbb)$. This isomorphism is given by the following mapping
\begin{equation*}
 \Tb_{\Fb_a} \ \ \longmapsto \ \ a=a(\xi) \in L_{\infty}(\Sbb).
\end{equation*}
The Toeplitz operators $\Tb_{\Fb_a}$ possess the following properties:
\begin{itemize}
 \item [-] \ $\|\Tb_{\Fb_a}\| = \|a\|_{L_{\infty}(\Sbb)}$,
 \item [-]  \ $\mathrm{spec}\, \Tb_{\Fb_a} = \mathrm{closure}\,(\mathrm{ess\text{-}range}\,(a))$,
 \item [-]   each common invariant subspace for the operators $\Tb_{\Fb_a}$ is of the form \ $\Ib( L_2(M))$, where $M$ is a measurable subset of $\Sbb$ having positive measure.
\end{itemize}
\end{theorem}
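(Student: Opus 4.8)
The key observation is the identity just derived, $\Tb_{\Fb_a} = \Ib\, M_a\, \Ib^*$, where $M_a$ is the multiplication operator by $a$ on $L_2(\Sbb)$. Since $\Ib : L_2(\Sbb) \to \Hc$ is unitary (established in Section \ref{The Space Hc}), the map $a \mapsto \Tb_{\Fb_a}$ is nothing but the unitary conjugation of the standard multiplication-operator representation $a \mapsto M_a$. The plan is therefore to transport all the well-known structural facts about multiplication operators on $L_2$ of a finite measure space across this unitary equivalence. First I would record that $a \mapsto M_a$ is a $*$-homomorphism from $L_\infty(\Sbb)$ into $\bopp{L_2(\Sbb)}$ which is isometric ($\|M_a\| = \|a\|_{L_\infty(\Sbb)}$, using that $dS$ has full support on $\Sbb$, so the essential supremum is attained along a sequence of sets of positive measure); hence $a \mapsto \Tb_{\Fb_a} = \Ib M_a \Ib^*$ is an isometric $*$-homomorphism onto its image. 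The image is already a $C^*$-algebra (isometric $*$-homomorphisms have closed range), so $\mathcal{T}(L_\infty(\Sbb))$ coincides with the set of its generators and is $*$-isomorphic, isometric to $L_\infty(\Sbb)$; commutativity is immediate since $L_\infty(\Sbb)$ is commutative. This gives the first three displayed assertions together with the first bullet.

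\textbf{Spectrum.} For the second bullet I would use that the spectrum is invariant under unitary conjugation, $\mathrm{spec}\,\Tb_{\Fb_a} = \mathrm{spec}\, M_a$, and then invoke the classical description of the spectrum of a multiplication operator: $\mathrm{spec}\, M_a = \mathrm{ess\text{-}range}(a)$, which is already closed, so the ``closure'' is harmless. Concretely, $\lambda \notin \mathrm{ess\text{-}range}(a)$ iff $(a-\lambda)^{-1} \in L_\infty(\Sbb)$, in which case $M_{(a-\lambda)^{-1}}$ inverts $M_a - \lambda$; conversely if $\lambda \in \mathrm{ess\text{-}range}(a)$ one builds approximate eigenfunctions from indicator functions of the sets $\{|a-\lambda| < \varepsilon\}$, which have positive measure.

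\textbf{Invariant subspaces.} This is the part requiring the most care, and it is the expected main obstacle. Since $\Ib$ is unitary, the common invariant subspaces of $\{\Tb_{\Fb_a} : a \in L_\infty(\Sbb)\}$ are exactly the images under $\Ib$ of the common invariant subspaces of $\{M_a : a \in L_\infty(\Sbb)\}$, so it suffices to show that every closed subspace $\mathcal{N} \subseteq L_2(\Sbb)$ invariant under all multiplications $M_a$, $a \in L_\infty(\Sbb)$, has the form $L_2(M)$ for some measurable $M \subseteq \Sbb$. Let $\Pb_{\mathcal N}$ be the orthogonal projection onto $\mathcal N$; invariance under every $M_a$ together with closedness of $\mathcal N$ (applied to the $*$-algebra $L_\infty(\Sbb)$, which is selfadjoint) forces $\mathcal N$ to be invariant under $M_{\bar a}$ as well, hence reducing, hence $\Pb_{\mathcal N}$ commutes with every $M_a$. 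By the standard maximality/commutant argument — $L_\infty(\Sbb)$ acting by multiplication on $L_2(\Sbb)$ is its own commutant (a maximal abelian von Neumann algebra) — the projection $\Pb_{\mathcal N}$ itself lies in $\{M_a\}$, so $\Pb_{\mathcal N} = M_{\chi_M}$ for some measurable set $M$, and then $\mathcal N = \Ran M_{\chi_M} = L_2(M)$. If one prefers to avoid quoting the maximal-abelian fact, I would instead argue directly: pick $a$ in the (essential) support of, say, the function $1$ and note that for $f \in \mathcal N$ and any $g \in L_\infty$, $gf \in \mathcal N$; taking $f$ with $|f| > 0$ a.e. on a set $M_f$ and letting $g$ range over all bounded functions shows $L_2(M_f) \subseteq \mathcal N$, and one checks $M = \bigcup_f M_f$ (a measurable envelope) does the job, with $\mathcal N = L_2(M)$ and the measure of $M$ positive precisely when $\mathcal N \neq \{0\}$. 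Finally, transporting back through $\Ib$ yields the stated form $\Ib(L_2(M))$, completing the proof.
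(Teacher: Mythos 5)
Your proposal is correct and follows exactly the route the paper takes: the paper derives the representation $\Tb_{\Fb_a} = \Ib\, a\, \Ib^*$ immediately before the theorem and states that this representation ``leads directly'' to all the listed properties, i.e.\ it transports the standard facts about multiplication operators on $L_2(\Sbb)$ through the unitary $\Ib$, just as you do. You merely supply the routine details (isometric $*$-homomorphism, spectrum of $M_a$, maximal abelianness of $L_\infty(\Sbb)$ for the invariant-subspace claim) that the paper leaves implicit.
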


To get an integral form of the Toeplitz operator $\Tb_{\Fb_a}$ we consider
\begin{eqnarray*}
 (\Tb_{\Fb_a}u)(x) &=& (\Ib a \Ib^*u)(x) = \Ib \left( a(\xi)\, c_d \lim_{R \to \infty} R^{-1} \int_{|y| < R} u(y) e^{-iy\xi}\, dy\right) \\
&=& c_d^2 \int_{\Sbb} a(\xi) \left(\lim_{R \to \infty} R^{-1}\int_{|y| < R} u(y) e^{-iy\xi}\, dy\right) e^{ix\xi}\,d\xi \\
&=& c_d \lim_{R \to \infty} R^{-1}\int_{|y| < R} \left( c_d \int_{\Sbb} a(\xi) e^{i(x-y)\xi}\, d\xi \right) u(y) dy \\
&=& c_d \lim_{R \to \infty} R^{-1}\int_{|y| < R} K(x-y)u(y) dy,
\end{eqnarray*}
where
\begin{equation*}
 K(x) = c_d \int_{\Sbb} a(\xi) e^{ix\xi}\, d\xi = (\Ib a)(x).
\end{equation*}

Given two functions $u$ and $v$ from $\Hc$, we define their \emph{$h$-convolution} (Herglotz convolution) as
\begin{equation*}
 (u \ast_h v)(x) = c_d  \lim_{R \to \infty} R^{-1}\int_{|y| < R} u(x-y)v(y) dy.
\end{equation*}
In this notation,
\begin{equation*}
 (\Tb_{\Fb_a}u)(x) = (K \ast_h u)(x), \quad \text{where} \ \ K(y) = (\Ib a)(y).
\end{equation*}

We list now some fundamental properties of $h$-convolutions.

\begin{proposition}
 Let $u$ and $v$ be two functions from $\Hc$, one of them being in $\Hc_{\infty} = \Ib(L_{\infty}(\Sbb))$. Then
\begin{itemize}
 \item [(i)] \quad $(u \ast_h v)(x) \in \Hc$,
 \item [(ii)] \quad $(u \ast_h v)(x) = (v \ast_h u)(x)$,
 \item [(ii)] \quad $\Ib^*(u \ast_h v) = \Ib^*(u) \cdot \Ib^*(v)$.
\end{itemize}
\end{proposition}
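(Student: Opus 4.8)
The statement concerns the $h$-convolution of two Herglotz functions $u,v \in \Hc$, at least one of which lies in $\Hc_\infty = \Ib(L_\infty(\Sbb))$. My plan is to reduce everything to the sphere via the unitary $\Ib$ and its inverse $\Ib^* = \Ib^{-1}$, computed in \eqref{eq:I*}. Write $u = \Ib\f$, $v = \Ib\psi$ with $\f,\psi \in L_2(\Sbb)$, and suppose without loss of generality that $\psi \in L_\infty(\Sbb)$ (so $v \in \Hc_\infty$). The crucial point is that all three assertions follow at once from a single identity: that $\Ib^*(u \ast_h v) = \f \cdot \psi$ as an element of $L_2(\Sbb)$, i.e. property (iii). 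Indeed, since $\psi$ is bounded, $\f\psi \in L_2(\Sbb)$, so $\Ib(\f\psi) \in \Hc$, which gives (i); and $\f\psi = \psi\f$ gives the symmetry (ii) once we know (iii) characterizes $u\ast_h v$ uniquely (which it does, $\Ib^*$ being injective). So the whole proposition rests on establishing (iii).

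\textbf{Main computation for (iii).} The first step is to express the $h$-convolution through the limit formula in its definition and to recognize, via \eqref{eq:I*}, that
\begin{equation*}
 \Ib^*(u\ast_h v)(\xi) = c_d \lim_{R\to\infty} R^{-1}\int_{|x|<R} (u\ast_h v)(x) e^{-ix\xi}\,dx.
\end{equation*}
Substituting the definition of $u\ast_h v$, one gets an iterated (double) limit of averages of the integrand $c_d\, u(x-y)v(y)e^{-ix\xi}$. The strategy is to insert the representations $u(x-y) = c_d\int_{\Sbb}\f(\eta)e^{i(x-y)\eta}\,dS(\eta)$ and $v(y) = c_d\int_{\Sbb}\psi(\zeta)e^{iy\zeta}\,dS(\zeta)$, interchange integrations (justified by the boundedness of $\psi$ and standard distributional/Fubini arguments, since the spheres are compact and the exponentials are bounded), and evaluate the $x$-average: the factor $e^{i x(\eta-\xi)}$ averaged over $|x|<R$ and divided by $R$ tends, in the appropriate weak sense on $L_2(\Sbb)$, to a multiple of $\delta(\eta-\xi)$ — this is precisely the content of the isometry relation \eqref{unitary} and the computation of $\Ib^*$ in Section \ref{The Space Hc}. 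That collapses the $\eta$-integral to $\eta = \xi$, leaving $c_d^2 \psi(\zeta) e^{i y(\zeta - \xi)}$ to be averaged over $y$; the $y$-average similarly forces $\zeta = \xi$, producing $\f(\xi)\psi(\xi)$ up to the bookkeeping of the constants $c_d$, which are arranged in \eqref{GeneralForm} exactly so that the final normalization is $1$.

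\textbf{Where the difficulty lies.} The routine parts are the Fubini interchanges and the constant-chasing. The genuinely delicate step is the rigorous justification of passing the two nested $R^{-1}\int_{|\cdot|<R}$ limits through the spherical integrals and identifying them with the ``delta on the sphere'' behavior — that is, showing that for $g \in L_2(\Sbb)$ one has $c_d^2\, R^{-1}\int_{|x|<R} \big(\int_\Sbb g(\eta)e^{ix\eta}dS(\eta)\big) e^{-ix\xi}\,dx \to g(\xi)$ in $L_2(\Sbb)$ as $R\to\infty$. This is exactly the statement that $\Ib^*\Ib = \mathrm{Id}$, already available from the excerpt, but here it must be applied \emph{inside} an outer limit, so one needs either a dominated-convergence argument (using $\psi \in L_\infty$ to control the inner expression uniformly in $R$) or to first establish that $u\ast_h v$ genuinely lies in $\Bb^*$ — equivalently in $\Hc$ — by a direct a priori estimate $\|u\ast_h v\|_* \le C\|\f\|_{L_2(\Sbb)}\|\psi\|_{L_\infty(\Sbb)}$, and only then invoke $\Ib^*$ on it. I would take the latter route: first prove the a priori bound showing $u\ast_h v \in \Hc$ (this simultaneously delivers (i)), then the identity (iii) follows by testing against the orthonormal basis $\eb_{n,j}$ or by the uniqueness of the far-field pattern in \eqref{Limit}, and (ii) is then immediate from commutativity of pointwise multiplication in $L_\infty \cdot L_2$.
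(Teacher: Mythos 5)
Your proposal is correct and follows essentially the same route as the paper: reduce to the sphere via $\Ib$ and $\Ib^*$ and identify $u \ast_h v$ with $\Ib(\f\cdot\psi)$, whence all three claims follow since one factor is bounded. The paper's computation is a bit more direct than your double-substitution scheme — it inserts only the representation of $u(x-y)$, interchanges the spherical integral with the $R$-limit, and recognizes the remaining inner average as $(\Ib^* v)(\xi)=\psi(\xi)$ via \eqref{eq:I*} — but the underlying idea and the role of the $L_\infty$ hypothesis are identical.
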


\begin{proof}
Let $\phi,\, \psi \in  L_2(\Sbb)$ be such that $u = \Ib(\phi)$ and $v = \Ib(\psi)$, or $\phi = \Ib^*(u)$ and $\psi = \Ib^*(v)$. Moreover, assume that one of them, either $\phi$ or $\psi$, belongs to $L_{\infty}(\Sbb)$. Then,
\begin{eqnarray*}
 (u \ast_h v)(x) &=& c_d  \lim_{R \to \infty} R^{-1}\int_{|y| < R} u(x-y)v(y) dy \\
 &=& c_d \lim_{R \to \infty} R^{-1}\int_{|y| < R} \left( c_d \int_{\Sbb} \phi(\xi) e^{i(x-y)\xi}\, d\xi \right) v(y) dy \\
&=& c_d \int_{\Sbb} \phi(\xi) \left( c_d\lim_{R \to \infty} R^{-1}\int_{|y| < R} v(y) e^{-iy\xi}\, dy\right) e^{ix\xi}\,d\xi \\
 &=& c_d \int_{\Sbb} \phi(\xi) \psi(\xi)\, e^{ix\xi}\,d\xi = \Ib(\phi \cdot \psi).
\end{eqnarray*}
But $\phi \cdot \psi \in L_2(\Sbb)$, thus $u \ast_h v = \Ib(\phi \cdot \psi) \in \Hc$.

Then, $u \ast_h v = \Ib(\phi \cdot \psi) = \Ib(\psi \cdot \phi) = v \ast_h u$.

And finally, $\Ib^*(u \ast_h v) = \Ib^*(\Ib(\phi \cdot \psi)) = \phi \cdot \psi =
\Ib^*(u) \cdot \Ib^*(v)$.
\end{proof}

\appendix
\section{Non-degeneracy in the Bergman and  Fock spaces}\label{SectDegenFock}
It seems to be useful to compare the degeneration expressed in Proposition \ref{PropNDegen} with the state of the similar property for the Bergman and the Fock spaces. As usual, $\Ac^2(\mathbb{B}^d)$ denotes the Bergman space  consisting of analytic functions  in the unit ball $\Bb(0,1)\equiv\mathbb{B}^d\subset\C^d$. In this space we consider operators generated by the sesquilinear form \eqref{Form1} with symbol $a\in L^1(\mathbb{B}^d)$. Suppose that $\Fb_a(u,v)=0$ for all $u,v\in \Ac^2(\mathbb{B}^d) $, in particular the sesquilinear form annuls  on all monomials, $\Fb_a(z^a,z^\b)=\int a(z)z^\a \overline{z}^\b dV(z)=0$. Consider the algebra $\mathfrak{A}$ consisting of finite linear combinations of functions $z^\a \bar{z}^\b$, where $\a,\b$ are multi-indices in $(\Z_+)^d$. This algebra consists, actually, of all polynomials of $2d$ real variables $x,y$. By the  Weierstrass approximation theorem, the closure of $\mathfrak{A}$ in $C(\overline{\mathbb{B}^d})$-norm
coincides with $C(\overline{\mathbb{B}^d})$. Since $\int_{\mathbb{B}^d}a fdV=0$ on all functions $f\in \mathfrak{A}$, by continuity, we have $\int_{\mathbb{B}^d}a fdV=0$ for any continuous in $\mathbb{B}^d$ function $f$, and this is possible only for $a=0$ (almost everywhere.) Further on, we will refer to the above kind of results as 'non-degeneracy' property.

Without essential changes, the above proof remains valid for  Toeplitz operators in the Bergman space in the polydisk.

Further on, a similar reasoning is applicable to the Bergman space of harmonic functions in the (real) ball $\DD_m$ in $\R^m$. For an even dimension $m=2d$, the non-degeneracy property follows from the same property for the analytic Bergman spaces, since the space of analytical functions is a subspace in the harmonic Bergman space. In an odd dimension, the result follows via a simple dimension reduction procedure. For a fixed $x_0\in\R^d\setminus 0$, we consider the subspace in the harmonic Bergman space, consisting of functions that are constant along the direction of $x_0$. The Weierstrass reasoning as above leads to the fact that the integral of $a$ along any straight line parallel to $x_0$ vanishes. Due to the arbitrariness of $x_0$, by the uniqueness property for the Radon transform, this implies that $a=0$.

A somewhat more complicated is the non-degeneracy situation for the Toeplitz operators in the Fock type space. We remind that the space $\Fs^2=\Fs^2(\C^d)$ consists of entire analytical functions in $\C^d$ that are square integrable with respect to the Gaussian measure $d\m(z)=\pi^{-d}dV(z)$, where $dV(z)$ is the Lebesgue measure in $\C^d$ identified in the usual way with $\R^{2d}$:
 \begin{equation*}
    \int_{\C^d}|u(z)|^2e^{-|z|^2}d\m(z)<\infty,
 \end{equation*}

 It is more convenient to consider in $\Fs^2$ the sesquilinear forms of the type  \eqref{Form1}, with the weight factor  explicitly separated, so the Toeplitz operator $\Tb_a$ in question is associated with the sesquilinear form
 \begin{equation}\label{FockForm}
 \Fb_a(u,v)=\int_{\C^d} a(z)u(z)\bar{v}(z)d\m(z)= \pi^{-d}\int_{\C^d}a(z)u(z)\bar{v}(z)e^{-|z|^2}dV(z).
 \end{equation}
 The representation \eqref{FockForm} of the sesquilinear form  is convenient since it can be expressed in the terms of the enveloping space $H=L_2(\C^d, d\m)$: $\Fb_a(u,v)=\langle au,v\rangle_H$.
 For the case of a symbol $a(z)$ having a compact support, the non-degeneracy property is proved identically with the Bergman space case. However, with the compactness of support condition  dropped, the situation becomes more complicated.
 In the paper \cite{GruVas} an example  of a \emph{radial} symbol $a(z)$, i.e., the one  depending on $r=|z|$ only, was constructed, so that $\Fb_a(u,v)=0$ for all pairs of functions $u,v\in \Fc^2(\C^d)$, for which the integral in \eqref{FockForm} converges. This symbol $a(z)$ \emph{grows} rather rapidly at infinity in such a way  that
 \begin{equation}\label{growing symbol}
    \limsup_{|z|\to\infty}|a(z)|e^{-|z|^2+c|z|^{\s}}>0
 \end{equation}
 for some  positive $c$ and some $\s\in(0,1)$. In particular, this means that the operator $\Tb_a$, defined initially on polynomials, has only the zero function as its value, and therefore extends to the zero operator by continuity. A more explicit example of the symbol $a$, again a radial one, has been constructed in \cite{BauLe}.

 It is interesting to understand the driving force in the construction of such 'degeneracy examples', in comparison to the degeneracy phenomenon in the Helmholtz space, depicted above, in Sect. 5.2.

 Let $a$ be  a function such that
 \begin{equation*}
 \Fb_a(u,v)=\pi^{-d}\int_{\C^d}a(z)u(z)\bar{v}(z)e^{-|z|^2}dV(z)=0
 \end{equation*}
 for all functions $u,v$ in the basis of $\Fc^2$, in other words,

 \begin{equation}\label{degen2}
    \int_{\C^d} a(z)z^\a \bar{z}^\b e^{-|z|^2}dV(z)=0
 \end{equation}
 for all multi-indices $\a,\b.$
 We pass to the ($2d$-dimensional real) Fourier transform, $$\Psi(\xi)=(2\pi)^{-d}\int_{C^d}a(x+iy)e^{-(x^2+y^2)} e^{i(x,y)\cdot \x}dxdy, \, \x\in\R^{2d},$$
 and we obtain from \eqref{degen2} the identity
 \begin{equation}\label{degen3}
    (\partial^\a\bar{\partial}^\b\Psi)(0)=0,
 \end{equation}
 for all multi-indices $\a,\b\in (\Z_+)^d$. So, the function $\Psi(\xi)$, the Fourier transform of $a(z)e^{-|z|^2}$, must have vanishing derivatives of all orders at the origin. For the (more computable) case of a radial symbol $a$, the above condition can be expressed (as it was done in \cite{GruVas}) in the terms of the one-dimensional Fourier transform. One should construct a smooth function $h(\r)$ that has zero of infinite order at zero, and take as $a(r)e^{-r^2}$ the inverse Fourier transform of $h(\r)$. Considerable ingenuity was needed to guess such function $h(\r)$ for which the inverse Fourier transform has a controlled growth at infinity, a slower one than the inverse Gaussian function, i.e.,
 \begin{equation}\label{degen4}
    a(r)e^{-r^2}=O(e^{-cr^{\s}}),  \, \s\in(0,1), c>0,
 \end{equation}
 so that the sesquilinear form be defined at least on all polynomials.

 A natural question arises about the sharpness of the above condition, i.e.,  whether there exist symbols with degeneracy property  but with just slightly slower growth than in \eqref{degen4}, for example,
 \begin{equation}\label{degen5}
    a(z)e^{-|z|^2}=O(e^{-cr}), c>0, \, r\to\infty.
 \end{equation}
 We will show now that such symbols do not exist.
 \begin{proposition}\label{PropDegen} If a nonzero symbol $a\in L^1_{loc}(\C^d)$ satisfies \eqref{degen5}, then the Toeplitz operator $\Tb_a$ in the Fock space is not a zero operator, moreover, there exist multi-indices $\a$, $\b$ such that
 \begin{equation}\label{degen6}
    \Fb_a(z^\a, \bar{z}^\b)\ne0.
 \end{equation}
 \end{proposition}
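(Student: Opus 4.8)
The plan is to argue by contraposition: I will show that if $\Fb_a(z^\a,\bar z^\b)=0$ for \emph{every} pair of multi-indices $\a,\b\in(\Z_+)^d$, then $a=0$ almost everywhere, contradicting the hypothesis that $a$ is nonzero. A preliminary remark is that under \eqref{degen5} the function $g(z):=a(z)e^{-|z|^2}$ decays exponentially at infinity, so (together with $a\in L^1_{\loc}$) $g\in L^1(\C^d)$, and since $z^\a\bar z^\b$ grows only polynomially, every moment $\Fb_a(z^\a,\bar z^\b)=\int_{\C^d}a(z)z^\a\bar z^\b e^{-|z|^2}\,dV(z)$ is absolutely convergent and well defined. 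By the computation recorded just before the statement (pass to the $2d$-dimensional Fourier transform $\Psi$ of $g$; see \eqref{degen2}--\eqref{degen3}), the vanishing of all these moments is equivalent to $(\partial^\a\bar\partial^\b\Psi)(0)=0$ for all $\a,\b$; and since the Wirtinger operators $\partial_{z_j},\partial_{\bar z_j}$ span the same space of first-order operators as $\partial_{x_j},\partial_{y_j}$, this means precisely that \emph{all} partial derivatives of $\Psi$ vanish at the origin.

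The heart of the matter is that hypothesis \eqref{degen5} --- exponential, rather than merely sub-exponential, decay of $g$ --- forces $\Psi$ to be real-analytic. Concretely I would fix any $c'<c$, use $|g(w)|\le C_{c'}e^{-c'|w|}$ for large $|w|$, and observe that for $\z=\x+i\eta\in\C^{2d}$ with $|\eta|<c'$ the integral defining the Fourier transform still converges absolutely, because $|g(w)e^{iw\cdot\z}|=|g(w)|e^{-w\cdot\eta}\le|g(w)|e^{|w|\,|\eta|}$ is integrable over $\R^{2d}$. A routine Paley--Wiener argument (Morera's theorem, or differentiation under the integral sign) then shows that $\Psi$ extends to a function holomorphic on the tube $\{\z:\ |\im\z|<c'\}$, which is convex, hence connected, and contains $\R^{2d}$. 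Since a holomorphic function all of whose partial derivatives vanish at one point of a connected domain is identically zero, we get $\Psi\equiv0$; by uniqueness of the Fourier transform $g=0$ a.e., hence $a=0$ a.e. This contradiction proves the proposition and, reversing the implication, exhibits multi-indices $\a,\b$ with $\Fb_a(z^\a,\bar z^\b)\ne0$, i.e. \eqref{degen6}, so that $\Tb_a$ is not the zero operator on polynomials.

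I do not anticipate a serious obstacle; the real content is the dichotomy between the decay rates $O(e^{-cr})$ and $O(e^{-cr^\s})$ with $\s<1$: the former makes $\Psi$ analytic, so that ``flat at a point'' forces ``$\equiv0$'', whereas with $\s<1$ in \eqref{degen4} one only obtains $\Psi\in C^\infty$, which is exactly the loophole exploited by the degeneracy examples of \cite{GruVas}, \cite{BauLe}. The points that need care are entirely bookkeeping: verifying $g\in L^1(\C^d)$ from \eqref{degen5} and $a\in L^1_{\loc}$; checking convergence of the moment integrals so that \eqref{degen2} is meaningful; identifying the correct tube of holomorphy (any radius $c'<c$); and invoking the identity theorem in several complex variables rather than its one-variable form. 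None of these presents a difficulty.
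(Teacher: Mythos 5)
Your argument is correct, but it is not the route the paper takes. The paper also argues by contraposition and also reduces to the vanishing of all moments $\int_{\C^d}a(z)z^\a\bar z^\b e^{-|z|^2}\,dV(z)$, but from there it stays on the ``physical'' side: the vanishing of these moments means $\int a(x+iy)e^{-|z|^2}H(x,y)\,dx\,dy=0$ for every real polynomial $H$, and the paper then invokes the Bernstein weighted approximation theorem (citing Koosis for the multidimensional version) with weight $e^{-c'|z|}$, $c'<c$, to approximate compactly supported continuous functions by polynomials and conclude $a=0$. You instead pass to the Fourier side and use the Paley--Wiener mechanism: exponential decay of $g=ae^{-|z|^2}$ makes $\Psi=\widehat{g}$ holomorphic on a tube, flatness at the origin plus the identity theorem forces $\Psi\equiv0$, and Fourier uniqueness gives $g=0$. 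Both proofs exploit exactly the same threshold ($e^{-cr}$ versus $e^{-cr^{\s}}$, $\s<1$), and the paper itself gestures at your viewpoint in its closing paragraph of the appendix (quasi-analyticity of $\widehat{g}$ versus decay of $g$, with a reference to Paley--Wiener), but does not use it as the proof. Your version is arguably more self-contained, resting only on standard Paley--Wiener theory and the several-variables identity theorem rather than on a multidimensional weighted Bernstein theorem; the paper's version, on the other hand, upgrades immediately to the refinement stated in the subsequent Remark (weights $\g$ with divergent logarithmic integral), where your analyticity argument would have to be replaced by a genuinely quasi-analytic (Denjoy--Carleman type) one. Your bookkeeping is sound: $g\in L^1$ follows from $a\in L^1_{\loc}$ plus \eqref{degen5}, the tube $\{|\im\z|<c'\}$ with $c'<c$ is the right domain of holomorphy, and the equivalence of the Wirtinger and real derivative conditions at the origin is exactly the content of \eqref{degen2}--\eqref{degen3}.
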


 \begin{proof}
 Suppose, on the contrary, that $\Fb(z^\a, \bar{z}^\b)=0$ for all multi-indices $\a,\b$. This means that $\Fb_a(p(z), q(\bar{z}))=0$ for all analytical  polynomials $p,q$, and, therefore,
 \begin{equation}\label{degen7}
    \int_{\C^d} a(x+iy)e^{-|z|^2}H(x,y) dxdy=0
 \end{equation}
 for any polynomial $H$ of real variables $x,y$. Fix some positive $c'<c_a$. By the Bernstein weighted approximation theorem (see, e.g., \cite{Koosis} for the multi-dimensional version), any continuous compactly supported function $G(x,y)$ can be approximated on $\R^d$ by polynomials of $x,y$ variables uniformly on $\R^d$ with weight $e^{-c'|z|}$. Using this approximation in \eqref{degen7}, we see that $\int_{\C^d} a(x+iy)e^{-|z|^2}\Gc(x,y) dxdy=0$ for any compactly supported function $G$, which implies that $a=0$.
 \end{proof}
 \begin{remark} The condition \eqref{degen5} can be slightly improved, using some sharper form of the Bernstein approximation theorem, see, e.g., \cite{Lorentz}. One can assume that $|a(z)|e^{-|z|^2}\le \g(|z|)$ for large $|z|$, where $\g$ is a sufficiently regular  non-increasing function on the positive semi-axis such that $\frac{\log \g(t)}{1+t^2}$ does not  belong to $L^1(0,\infty)$.
 \end{remark}

 One can also understand the non-degeneracy described in Proposition \ref{PropDegen} using the properties of the Fourier transform of $a(z)e^{-|z|^2}$. As  explained above, the degeneracy occurs as soon as this Fourier transform  has zero of infinite order at the origin. For a nonzero function to have zero of infinite order,  it is necessary to belong to some
non quasi-analytical class. This latter  property for the Fourier transform of $a$ is related to the decay rate of $a(z)e^{-|z|^2}$, see, e.g., \cite{PW}.

\end{document}